\documentclass[11pt]{article}

\usepackage[utf8]{inputenc}

\usepackage{geometry}
\geometry{a4paper}
\usepackage{graphicx}

\usepackage{booktabs} 
\usepackage{array} 
\usepackage{paralist} 
\usepackage{verbatim} 
\usepackage{subfig} 
\usepackage{tabularx}
\usepackage{fancyhdr} 
\pagestyle{fancy} 

\lhead{}\chead{}\rhead{}
\lfoot{}\cfoot{\thepage}\rfoot{}

\usepackage[nottoc,notlof,notlot]{tocbibind}
\usepackage[titles,subfigure]{tocloft}

\usepackage{amsmath,amsfonts,amsthm,mathrsfs,amssymb,cite}

\newcommand{\R}{{\mathbb R}}
\newcommand{\Z}{{\mathbb Z}}
\newcommand{\N}{{\mathbb N}}
\newcommand{\C}{{\mathbb C}}
\newcommand{\K}{{\mathcal K}}
\newcommand{\U}{{\mathcal U}}
\newcommand{\W}{{\mathcal W}}
\newcommand{\CC}{{\mathcal C}}

\newcommand{\PP}{{\mathbb P}}
\newcommand{\s}{{\mathbb S}}
\newcommand{\F}{{\mathcal F}}

\newcommand{\be}{\begin{eqnarray}}
\newcommand{\ben}{\begin{eqnarray*}}
\newcommand{\en}{\end{eqnarray}}
\newcommand{\enn}{\end{eqnarray*}}

\newcommand{\ima}{{\rm Im\,}}

\newcommand{\G}{\Gamma}

\newtheorem{thm}{Theorem}[section]
\newtheorem{cor}{Corollary}[section]
\newtheorem{lem}{Lemma}[section]
\newtheorem{prop}{Proposition}[section]
\theoremstyle{definition}
\newtheorem{defn}{Definition}[section]
\theoremstyle{remark}

\newtheorem{rem}{Remark}[section]
\numberwithin{equation}{section}

\title{\bf Acoustic Scattering from Corners, Edges and Circular Cones}

\author{Johannes Elschner \thanks{ Weierstrass Institute, Mohrenstr. 39,
10117 Berlin,
Germany. Email: {\tt elschner@wias-berlin.de}},
Guanghui Hu \thanks{Weierstrass Institute, Mohrenstr. 39,
10117 Berlin,
Germany. Email: {\tt hu@wias-berlin.de}}}

\begin{document}
\maketitle

\begin{abstract}
Consider the time-harmonic acoustic scattering from a bounded  penetrable obstacle imbedded in an isotropic homogeneous medium. The obstacle is supposed to possess a circular conic point or an edge point on the boundary in three dimensions and a planar corner point in two dimensions.
The opening angles of cones and edges are allowed to be any number in $(0,2\pi)\backslash\{\pi\}$.
We prove that such an obstacle scatters any incoming wave non-trivially (i.e., the far field patterns cannot vanish identically), leading to the absence of real non-scattering wavenumbers. Local and global uniqueness results for the inverse problem of recovering the shape of a penetrable scatterers are also obtained using a single incoming wave. Our approach relies on the singularity analysis of the inhomogeneous Laplace equation in a cone.
\end{abstract}

\section{Introduction}\label{Sec:1}
Consider a time-harmonic acoustic wave  incident onto a bounded penetrable
scatterer $D\subset \R^n$  ($n=2,3$) embedded in a homogeneous isotropic medium.
The incident field $u^{in}$ is supposed to satisfy the Helmholtz equation
\be\label{eq:0}
\Delta w+k^2 w=0\quad\mbox{in}\quad \R^n,
\en
with the wavenumber $k>0$. Throughout the paper we suppose that $u^{in}$ does not vanish identically and that the
complement $D^e:=\R^n\backslash\overline{D}$ of $D$ is connected.
The acoustic properties of the scatterer can be described by the refractive index function $q\in L^\infty(\R^n)$ such that $q\equiv 1$ in $D^e$. Hence,
 the contrast function $1-q$ is supported in $D$.
 The wave propagation is then governed by the Helmholtz equation
\begin{equation}\label{eq:Helm}
\Delta u+k^2 q\, u=0\quad\mbox{in}\quad \R^n.
\end{equation}
In (\ref{eq:Helm}),
 $u=u^{in}+u^{sc}$ denotes the total wave where
$u^{sc}$ is the scattered field satisfying the Sommerfeld radiation condition
\begin{equation}\label{eq:radiation}
\lim_{|x|\rightarrow \infty} |x|^{\frac{n-1}{2}}\left\{ \frac{\partial u^{sc}}{\partial |x|}-ik u^{sc} \right\}=0.
\end{equation}
Across the interface $\partial D$, we assume the continuity of the total field and its normal derivative,
\be\label{TE}
u^+=u^-,\quad \partial_\nu u^+=\partial_\nu u^-\quad\mbox{on}\;\partial D.
\en
Here the superscripts $(\cdot)^\pm$ stand for the limits taken from outside and inside, respectively, and $\nu\in \s^{n-1}:=\{x\in\R^n: |x|=1\}$ is the unit normal on $\partial D$ pointing into $D^e$. The unique solvability of the scattering problem \eqref{eq:Helm}, \eqref{eq:radiation} and (\ref{TE}) in $H^2_{loc}(\R^n)$ is well known (see e.g., \cite[Chapter 8]{CK}). In particular, the Sommerfeld radiation condition (\ref{eq:radiation}) leads to  the asymptotic expansion
\begin{equation}\label{eq:farfield}
u^{sc}(x)=\frac{e^{ik |x|}}{|x|^{(n-1)/2}}\; u^\infty(\hat x)+\mathcal{O}\left(\frac{1}{|x|^{n/2}}\right),\quad |x|\rightarrow+\infty,
\end{equation}
 uniformly in all directions $\hat x:=x/|x|$, $x\in\mathbb{R}^n$. The function $u^\infty(\hat x)$ is an analytic function defined on $\s^{n-1}$ and is  referred to as the \emph{far-field pattern} or the \emph{scattering amplitude}.  The vector $\hat{x}\in\s^{n-1}$ is called the observation direction of the far field. The classical inverse medium scattering problem consists of the recovery of the refractive contrast $1-q$ or the boundary $\partial D$ of its support from the far-field patterns corresponding to one or several incident plane waves.
This paper is concerned with the following two questions:
\begin{description}
\item[(i)] Does a penetrable obstacle scatter any incident wave trivially (that is, $u^{sc}\equiv 0$) ?
\item[(ii)] Does the far-field pattern of a single plane wave uniquely determine the shape of a penetrable obstacle ?
\end{description}
A negative answer to the first question means that acoustic cloaking cannot be achieved using isotropic materials, while a positive answer to the second one implies uniqueness in inverse medium scattering with a single plane wave. It is widely believed that these assertions are true for a large class of scatterers; however, little progress has been made so far.
If $D$ trivially scatters any Herglotz wave function of the form
\ben
u^{\mathrm{in}}(x)=\int_{\s^{n-1}} \exp(ikx\cdot d)\,g(d)\,ds(d),\quad g\in L^2(\s^{n-1}),
\enn
then $\lambda=k^2$ is called  \emph{non-scattering energy}, or equivalently, $k$ is called \emph{non-scattering wavenumber}; see \cite{BLS}.
 A negative answer to the first question obviously leads to the absence of non-scattering energies. Moreover, it implies that the \emph{relative scattering operator} (or the so-called far-field operator \cite{CK}) has a trivial kernel and cokernel at every real wavenumber, which is required by a number of numerical methods in inverse scattering.
 Recall that $k>0$ is called an \emph{interior transmission eigenvalue} associated with the potential $q$ in $D$ if the coupling problem
 \be\label{ITP}\left\{\begin{array}{lll}
\Delta w + k^2 w=0,\quad \Delta u+k^2 q u=0&&\mbox{in}\quad D,\\
w=u,\qquad \qquad \;\partial_\nu w=\partial_\nu u &&\mbox{on}\quad \partial D.
\end{array}\right.
\en
 has at least one non-trivial solution $(w,u)\in H^1(D)\times H^1(D)$ such that $w-u\in H_0^2(D)$; see e.g., \cite{CGH2010,CP,CPJ,S}.
 A non-scattering wavenumber must be an interior transmission eigenvalue associated with the given potential, but not vice versa.
 An interior transmission eigenvalue $k$ is a non-scattering wavenumber only if the eigenfunction that satisfies the Helmholtz equation (\ref{eq:0}) in $D$ can be analytically extended as an incident wave into the whole space.
 We remark that the second question is more difficult than the first one. In fact, $D$ cannot scatter any incident wave trivially if $D$ could be uniquely determined by a single far-field pattern of any incoming wave. However, we do not know whether the reverse statement holds (see Theorem \ref{TH1} and Remark \ref{remark} (i)).

The answer to the uniqueness question provides an insight into whether or not the measurement data are sufficient to determine the unknowns, playing an important role in numerics (e.g., using optimization-based iterative schemes).
The shape identification problem in inverse scattering with a single far-field pattern is usually difficult and challenging, because it is a formally determined inverse problem, that is, the dimensions of the data and the  unknowns are the same.  For sound-soft obstacles, local uniqueness results were proved in \cite{CS1983,SU2004,Gintides}. Global uniqueness results have been obtained within the class of polyhedral or polygonal sound-soft and sound-hard scatterers (e.g., \cite{Rondi05,CY,EY06,HL14, LHZ06}), using the reflection principle for the Helmholtz equation under the Dirichlet and Neumann boundary conditions.
However, the proofs of these local and global uniqueness results do not apply to penetrable scatterers. See also \cite{Lax, Kirsch93} for the proof with infinitely many plane waves based on ideas of Schiffer and Isakov.
Earlier uniqueness results in inverse medium scattering were derived by sending plane waves with distinct directions at a fixed frequency (see e.g.,\cite{ElHu, Isakov90, Kirsch93}), which results in overdetermined inverse problems. Intensive efforts have also
been devoted to the unique determination of the variable contrast $1-q$  from  knowledge of the far-field patterns of all incident plane waves or by measuring the Dirichlet-to-Nuemann map of the Helmholtz equation. We refer to  \cite{Nac, SG} and \cite[Chapter 10.2]{CK} for the uniqueness in 3D and to recent results \cite{B,OYa} in 2D with certain regularity assumptions on the potential.

The study of non-scattering energies dates back to \cite{KS} in the case of a convex corner domain, with the main emphasis placed upon the exploration of the notion of \emph{scattering support} for an inhomogeneous medium. In the recent paper \cite{BLS}, it was shown that a penetrable scatterer having $C^\infty$-potentials with a rectangular corner scatters every incident wave non-trivially. The argument there is based on the use of complex geometric optics (CGO) solutions,
and the approach was later extended to the cases of a
 convex corner in $\R^2$ and a  circular conic corner in $\R^3$ whose opening angle is outside of a countable subset of $(0,\pi)$ (see \cite{PSV}).
  In the authors' previous work \cite{ElHu2015},  any corner in $\R^2$ and any edge in $\R^3$ are shown to be capable of scattering every incident wave non-trivially if the potential is real-analytic. In addition, the shape of a convex penetrable obstacle of  polygonal or polyhedral type can be uniquely determined by a single far-field pattern. The approach of \cite{ElHu2015} relies on the expansion of solutions to the Helmholtz equation with real-analytic potentials. The CGO-solution methods of \cite{PSV,BLS} also lead to uniqueness in shape identification but are confined so far to convex polygons in $\R^2$ and rectangular boxes in $\R^3$ with H\"older continuous potentials (see \cite{HSV}).

The aim of this paper is to verify uniqueness and the absence of real non-scattering wavenumbers in a more general setting. We shall consider
  \emph{curvilinear polygons} in $\R^2$, and \emph{curvilinear polyhedra} and \emph{circular cones} in $\R^3$ (see Section \ref{Results} for a precise definition) with an arbitrary piecewise H\"older continuous potential. We present a novel approach that relies heavily on the corner singularity analysis of solutions to the inhomogeneous Laplace equation in weighted H\"older spaces.
  If a penetrable obstacle scatters an incoming wave trivially or two distinct penetrable obstacles generate the same far-field pattern, one can always find a solution to the Helmholtz equation (\ref{eq:0}) in the exterior of an obstacle $D$ which extends analytically across a sub-boundary of $D$. However,
  we prove that in conic and wedge domains non-trivial solutions to the Helmholtz equation with certain boundary data cannot be analytically extended into a full neighborhood of the corner and edge points because of both the interface singularity and the medium discontinuity; see Lemmas \ref{lem}, \ref{lem:corner-2D}, \ref{lem:edge-3D}  and \ref{lem:curvilinear}.
  Our approach
 is different from those in \cite{PSV,ElHu2015} and extends the results of \cite{PSV,BLS,ElHu2015,HSV} to a large class of potential functions and corner domains. Moreover, we obtain a local uniqueness result for the inverse scattering problem with a single incoming wave and the global uniqueness within the class of convex polygons and polyhedra with flat surfaces; see Theorem \ref{TH2} and Corollary \ref{Corollary}. It should be remarked that our arguments are applicable to the case of more general incident fields (see Remark \ref{rem:point-source}), because only local properties of the Helmholtz equation are needed in our case of penetrable obstacles with singular boundary points. However, the far-field behaviour of the total field seems to be necessary in the unique determination of a general impenetrable scatterer.

The paper is organized as follows. Our results will be presented and verified in the subsequent Sections \ref{Results} and \ref{proof}. The proofs can be reduced to the analysis of a coupling problem between Helmholtz equations with different potentials near a boundary corner point; see Lemma \ref{lem}. We first carry out the proof of Lemma \ref{lem} for polygons in Section \ref{sec:3} and then generalize the arguments to polyhedra in Section \ref{3D-edges} by applying the partial Fourier transform. The techniques will be adapted to handle curvilinear polygons and  polyhedra, and circular cones in Sections \ref{sec: curvilinear} and \ref{sec:cone}.
In Sections \ref{Regularity} and \ref{Regularity-C}, we shall state the auxiliary solvability results for the Laplace equation in weighted Sobolev and H\"older spaces for  two and three dimensional cones, respectively. The proofs of several propositions that are used in Sections \ref{sec:2D}-\ref{sec:cone} will be carried out in the appendix.


\section{Main results}\label{Results}
We introduce several notations before stating the main results.
For $j\in \N_0:=\{0\}\cup\N$,  $\nabla^j_x$ stands for the vector of all partial derivatives of order $j$ with respect to $x=(x_1,x_2,\cdots,x_n)\in \R^n$, i.e.,
\ben
\nabla^j_x u=\left\{\partial_{x_1}^{j_1}\partial^{j_2}_{x_2}\cdots \partial^{j_n}_{x_n}\,u(x):\quad j_1,j_2,\cdots,j_n\in\N_0,j_1+j_2+\cdots+j_n=j\right\}.
\enn In the particular case $j=1$, the notation $\nabla^1_x u=\nabla_x u$ means the gradient of $u$. If $j=0$, we have
$\nabla^0_x u=u$.
The spatial variable $x$ will be dropped when $\nabla^j$ is clearly understood from the context. Denote by $O$ the origin in $\R^n$.
Let $(r,\theta)$ be the polar coordinates of $x=(x_1,x_2)\in\R^2$.
 Define $\K=\K_\omega:=\{(r,\theta): r>0, 0<\theta<\omega\}$, a sector in $\R^2$ with the opening angle $\omega\in(0,2\pi)$ at the origin. Denote by $B_a(P):=\{x\in \R^n:|x-P|<a\}$ the ball centered at $P$ with radius $a>0$, and by $I$ the $n$-by-$n$ identity matrix in $\R^{n\times n}$. For simplicity we write $B_a(O)=B_a$.

We first introduce the concepts of  (planar) corner points in $\R^2$, and edge and circular conic points in $\R^3$;  see Figure \ref{Corner1} for illustration of planar corners of a curvilinear polygon.
\begin{defn}\label{def-2d} (see e.g., \cite[Chapter 1.3.7]{MNP})
Let $D$ be a bounded open set of $\R^2$. The point $P\in\partial D$ is called corner point if
there exist a neighbourhood $V$ of $P$
, a diffeomorphism $\Psi$  of class $C^{\,2}$ and an angle $\omega=\omega(P)\in(0,2\pi)\backslash\{\pi\}$  such that
 \be\label{polygon}
 \nabla \Psi(P)=I\in \R^{2\times 2},\quad \Psi(P)=O,\quad
 \Psi(V\cap D)=\K_\omega\cap B_1.
 \en
 We shall say that $D$ is a curvilinear polygon, if for every $P\in \partial D$, (\ref{polygon}) holds with $\omega(P)\in(0,2\pi)$.
\end{defn}

\begin{defn}\label{def-3d}
Let $D\subset\R^3$ be a bounded open set. The point $P\in \partial D$ is called a vertex if
there exist a neighbourhood of $V$ of $P$,
 a diffeomorphism $\Psi$ of class $C^{\,2}$ and a polyhedral cone $\Pi$ with the vertex at $O$ such that
$\nabla \Psi(P)=I\in \R^{3\times 3}$, $\Psi(P)=O$ and $\Psi$ maps $V\cap \overline{D}$ onto a neighbourhood of $O$ in $\overline{\Pi}$. $P$ is called an edge point of $D$ if
\be\label{polyhedron}
\Psi(V\cap D)=(\K_\omega\cap B_1)\times(-1,1)
\en
for some $\omega(P)\in (0,2\pi)\backslash\{\pi\}$. We shall say that $D$ is a curvilinear polyhedron if, for every point $P\in\partial D$,  either (\ref{polyhedron}) applies  with $\omega(P)\in (0,2\pi)$ or $P\in\partial D$ is  a vertex.
\end{defn}

\begin{figure}
\begin{center}
\scalebox{0.35}{\includegraphics{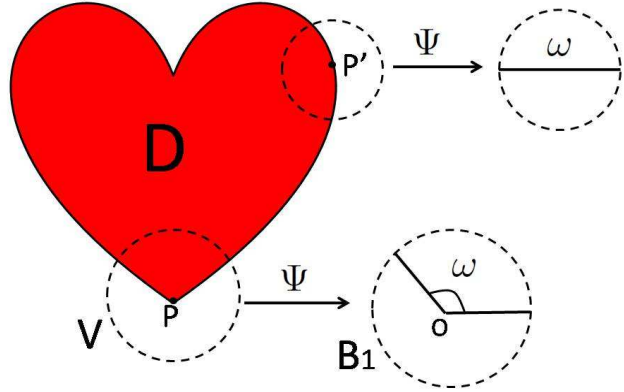}}
 \caption{$P\in \partial D$ is a corner of the curvilinear polygon $D$, whereas $P'$ is not a corner.}
\label{Corner1}
\end{center}
\end{figure}

A curvilinear polygon resp. polyhedron allows both curved and flat surfaces near a corner resp. edge point (see Figures \ref{Corner1} and \ref{cone-f}). The conditions (\ref{polygon}) and (\ref{polyhedron}) exclude  peaks at $O$ (for which the opening angle of the planar sector is $0$ or $2\pi$). 

Let $(r,\theta,\varphi)$ be the spherical coordinates of $x=(x_1,x_2,x_3)\in\R^3$.
Let $\CC=\CC_\omega$ be an infinite circular cone in $\R^3$ defined as (see Figure \ref{cone-f})
\be\label{cone}
\CC:=\{(r,\theta,\varphi): r>0, 0<\theta<\omega, 0\leq\varphi<2\pi\}
\en
for some $\omega\in(0,\pi)\backslash\{\pi/2\}$.
Clearly, the vertex of $\CC$ is located at the origin and the opening angle of $\CC$ is $2\omega\in (0,2\pi)\backslash\{\pi\}$. The cone $\CC_\omega$ is identical with the half space $x_3>0$ if $\omega=\pi/2$.
\begin{defn}
We say that a bounded open set $D\subset\R^3$ has a circular conic point $P\in\partial D$ if
$ D\cap B_a(P)$ coincides with $\CC\cap B_a$ for some $a>0$ up to a coordinate translation or rotation.
$D$ is called a circular conical domain if it has at least one circular conic point.
\end{defn}

\begin{figure}
\begin{center}
\scalebox{0.35}{\includegraphics{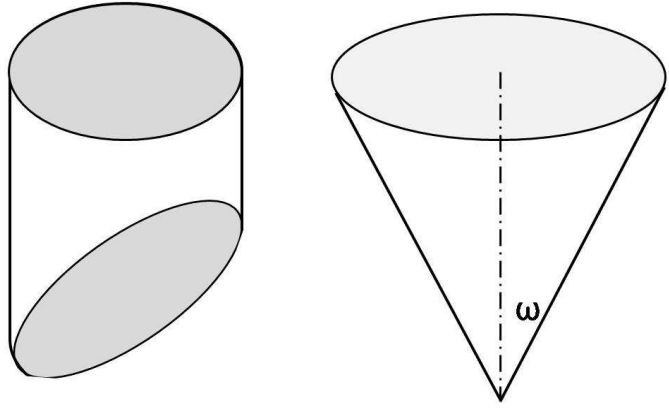}}
 \caption{Illustration of a curvilinear polyhedron (left) and a circular cone $\CC_\omega$  with the opening angle $2\omega\in (0,2\pi)\backslash\{\pi\}$ (right).}
\label{cone-f}
\end{center}
\end{figure}

 Let $D$ be a bounded penetrable obstacle in $\R^n$, with $O\in\partial D$ being a planar corner point in $\R^2$, and  an edge or circular conic point in $\R^3$. Denote by $W^{\kappa,p}$ and $H^\kappa=W^{\kappa,2}$ the standard Sobolev spaces. We make the following assumption on $q$ in a neighborhood of $O$.
\begin{description}
 \item[Assumption (a):] There exist $l\in \N_0$, $s\in(0,1), \epsilon>0$ such that
       \be\label{q}
       q\in C^{\,l,s}(\overline{D\cap B_\epsilon})\cap W^{l,\infty}(B_\epsilon),\quad
        \nabla^l\,(q-1)\neq 0 \quad \mbox{at}\quad O.
        \en
\end{description}
Note that the potential has been normalized to be one for $x\in D^e$ due to the homogeneity of the background medium, and that for $l\geq 1$
 the relation $\nabla^l\,(q-1)\neq 0$ at $O$ means that at least one component of the vector $\nabla^l\,q(O)$ does not vanish.

By the assumption (a), $q$ is required to be $C^{\,l,s}$ continuous up to the boundary only in a neighborhood of $O$. The relation (\ref{q}) with $l=0$ means the discontinuity of $q$ at $O$, i.e., $q(O)\neq 1$, and  has been assumed in \cite{PSV,BLS,ElHu2015,HSV} in combination with other smoothness conditions on $q|_{\overline{D}}$ near $O$. A piecewise constant potential such that $q|_{\overline{D}}\equiv q_0\neq 1$ fulfills the assumption (a) with $l=0$.
When $l\geq 1$, it follows from the Sobolev imbedding relation $W^{l,\infty}(B_\epsilon)\subset C^{l-1}(B_\epsilon)$
that the function $q$ is $C^{l-1}$-smooth in $B_\epsilon$, implying that $q(x)=1+\mathcal{O}(|x|^l)$ as $|x|\rightarrow0$ in $D$.
Physically, this means a lower contrast of the material on $\overline{D\cap B_\epsilon}$ compared to the background medium.

The main results of this paper are stated as follows.

 \begin{thm}\label{TH1}
 Under the assumption (a), a penetrable obstacle with a planar corner point in $\R^2$, and with an edge or a circular conic point in $\R^3$
   scatters every incident wave non-trivially.
\end{thm}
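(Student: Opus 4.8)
The plan is to argue by contradiction and to reduce the statement to a purely local rigidity property near the singular boundary point $O$. Suppose some incident wave $u^{in}\not\equiv0$ produced $u^{sc}\equiv0$. Then $u=u^{in}$ in $D^e$, so near $O$ the total field $u$ coincides on $D^e$ with the entire solution $w:=u^{in}$ of $\Delta w+k^2w=0$ in $\R^n$, and the transmission conditions (\ref{TE}) give $w=u$ and $\pa_\nu w=\pa_\nu u$ on $\pa D\cap B_\epsilon$. Put $v:=w-u$ on $D\cap B_\epsilon$. Subtracting the two Helmholtz equations, $v$ solves $\Delta v+k^2v=k^2(q-1)\,u$ in $D\cap B_\epsilon$ with $v=\pa_\nu v=0$ on $\pa D\cap B_\epsilon$. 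The theorem follows once we prove that, under Assumption (a), this forces $w$ to vanish to infinite order at $O$: since $w$ solves a constant-coefficient Helmholtz equation it is real-analytic on the connected set $\R^n$, so infinite-order vanishing at one point yields $w\equiv0$, i.e.\ $u^{in}\equiv0$, a contradiction. This local statement is exactly the coupling lemma (Lemma \ref{lem} for planar corners, together with its wedge and circular-cone versions), and proving it is where the real work lies.

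\textbf{The coupling lemma via singularity analysis.} First flatten the boundary near $O$ with the $C^{\,2}$ diffeomorphism of Definitions \ref{def-2d}--\ref{def-3d}, turning $D\cap B_\epsilon$ into the model sector $\K_\om\cap B_1$ (resp.\ wedge or circular cone); this perturbs $\Delta$ only by a variable-coefficient operator whose coefficients vanish at $O$, which is negligible at leading order. In $\K_\om$ one studies $\Delta V=f$ under the \emph{overdetermined} boundary conditions $V=\pa_\nu V=0$ on the two faces $\theta=0,\om$ satisfied by $v$, working in weighted H\"older (Kondrat'ev-type) spaces as developed in Section \ref{Regularity}; Assumption (a) provides $f=(q-1)u$ (and $u$ itself, via interior/Schauder estimates) with the regularity needed for an asymptotic expansion at $O$. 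The decisive point is that, precisely when $\om\ne\pi$, this overdetermined Laplacian has a nontrivial cokernel: e.g.\ at the lowest level a constant source $f\equiv\kappa$ would require a homogeneous-degree-$2$ solution with zero Cauchy data on both faces, the unique candidate being $\tfrac\kappa4(1-\cos2\theta)r^2$, whose Cauchy data on $\theta=\om$ vanishes only if $\cos2\om=1$, i.e.\ $\om\in\pi\Z$ (for $\om=\pi$ one simply has $\tfrac\kappa2 x_2^2$, and $O$ is not a genuine corner). Since $v=w-u$ \emph{does} solve this overdetermined problem, $(q-1)u$ must satisfy all the resulting compatibility conditions. Using that $q-1$ has a nonvanishing homogeneous part of order $l$ at $O$, and that on the faces $w=u$, $\pa_\nu w=\pa_\nu u$ while the homogeneous parts of $w$ obey $\Delta w_{j+2}=-k^2w_j$ (so that a part which is harmonic and has zero Cauchy data on a face must vanish), one shows by induction on the order that every homogeneous part of $u$ — hence of $w$ — at $O$ vanishes; thus $w$ vanishes to infinite order at $O$.

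\textbf{Edges and circular cones.} For an edge point, after a cut-off in the edge variable $x_3$ away from the edge (whose commutator terms are supported away from $O$, hence irrelevant for vanishing at $O$) one applies the partial Fourier transform $x_3\mapsto\xi$; the coupling problem becomes a one-parameter family $\Delta_{x_1,x_2}\hat v+(k^2-\xi^2)\hat v=k^2\,\widehat{(q-1)u}$ in $\K_\om$ with zero Cauchy data on the two faces, and the two-dimensional argument applies for every $\xi$ since only the principal part $\Delta_{x_1,x_2}$ matters. For a circular conic point one separates variables in spherical coordinates on the cap $\{0\le\theta<\om\}$; the role played in 2D by $\cos2\om\ne1$ is taken over by the fact that for $\cos\om\in(-1,1)$ the associated Legendre functions governing the angular eigenfunctions have only simple zeros, so no solid spherical harmonic has zero Cauchy data on $\theta=\om$, while $\om=\pi/2$ is excluded because then $O$ is a regular point and the relevant identities degenerate. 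In each case the conclusion is that $u^{in}$ vanishes to infinite order at $O$, hence identically.

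\textbf{Expected main obstacle.} The heart of the matter is the singularity analysis of the inhomogeneous Laplacian in the cone under the low regularity of Assumption (a): the source is only piecewise H\"older, so one cannot Taylor-expand, and everything must be carried out in the weighted H\"older spaces of Sections \ref{Regularity}--\ref{Regularity-C}, with all remainders controlled uniformly along the induction. Equally delicate is organizing that induction so that the compatibility conditions at successive orders, together with the prescribed nonvanishing of a homogeneous part of $q-1$, genuinely exhaust the degrees of freedom of $u$ at $O$ (this is the substance of the coupling lemma, and the source of the restriction $\om\ne\pi$ resp.\ $\om\ne\pi/2$); and one must verify that the $C^{\,2}$-flattening of a curvilinear boundary and the cut-off/Fourier reduction in 3D indeed preserve the leading-order obstruction. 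Once this machinery is in place, the bootstrap and the concluding real-analyticity argument are routine.
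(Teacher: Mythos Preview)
Your approach is essentially the paper's: argue by contradiction, reduce to the coupling lemma (Lemma~\ref{lem}), and prove that lemma by Kondrat'ev-type singularity analysis in weighted H\"older spaces with an induction on the order of vanishing of $u^{in}$ at $O$, concluding via real-analyticity; the partial Fourier transform for edges and the spherical-harmonic separation for circular cones are exactly the paper's devices.

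Two technical points are glossed over. First, for $l\geq1$ your reduction only secures $v=\pa_\nu v=0$ on $\Gamma_\epsilon$, whereas Lemma~\ref{lem} requires $\pa_\nu^j(u_1-u_2)=0$ for all $j\le l+1$. The paper obtains these from Proposition~\ref{P1}: the assumption $q\in W^{l,\infty}(B_\epsilon)$ gives $u\in C^{\,l+1,\alpha}(\overline{B_\epsilon})$, and since $u=u^{in}$ on $D^e\cap B_\epsilon$ the matching of all derivatives up to order $l+1$ across $\Gamma_\epsilon$ follows. This is what permits the paper to replace $u_1-u_2$ by $\nabla^l(u_1-u_2)$ so that the source has a nonzero value at $O$ and the induction starts cleanly at order zero. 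Second, the inductive mechanism you describe is slightly off: the decisive rigidity is not that ``a harmonic part with zero Cauchy data on a face must vanish'' (false for a single face), but Proposition~\ref{P6} (resp.\ \ref{P6C}): a homogeneous \emph{biharmonic} polynomial with zero Cauchy data on \emph{both} faces of $\K_\omega$ (resp.\ on $\partial\CC_\omega$), $\omega\neq\pi$ (resp.\ $\omega\neq\pi/2$), must vanish. Your degree-two example is precisely the base case of this proposition; at step $m$ the paper shows the leading homogeneous part $q_{m+2}$ of $v$ is such a polynomial, hence $q_{m+2}\equiv0$, hence $p_m=\Delta q_{m+2}\equiv0$, hence $u_{1,m}\equiv0$.
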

Theorem \ref{TH1} implies the absence of real non-scattering wavenumbers in  curvilinear polygonal and polyhedral domains as well as in  circular conic domains. To answer the second question mentioned in Section \ref{Sec:1}, we present our uniqueness results in the following theorem and corollary (see Figure \ref{Corner2} for geometrical illustration).

 \begin{thm}\label{TH2} Let $D_j$ ($j=1,2$) be two penetrable obstacles in $\R^n$ ($n=2,3$). Suppose that the potentials $q_j$ associated to $D_j$ fulfill the assumption (a) for each corner,  edge and circular conic point. If $\partial D_2$ differs from $\partial D_1$ in the presence of a corner, edge or circular conic point lying on the boundary of the unbounded component of $\R^n\backslash\overline{(D_1\cup D_2)}$, then the far-field patterns corresponding to $D_j$ and $q_j$ incited by any incoming wave cannot coincide.
\end{thm}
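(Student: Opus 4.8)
The plan is to argue by contradiction, localise the problem near the distinguished singular point, and reduce it to the coupling problem analysed in Lemmas \ref{lem}, \ref{lem:corner-2D}, \ref{lem:edge-3D} and \ref{lem:curvilinear} (and, for a circular cone, in the corresponding lemma of Section \ref{sec:cone}). Assume, contrary to the assertion, that an incoming wave $u^{in}\not\equiv 0$ produces the same far-field pattern for $(D_1,q_1)$ and $(D_2,q_2)$, so that $u_1^\infty\equiv u_2^\infty$ on $\s^{n-1}$. Denote by $G$ the unbounded connected component of $\R^n\ba\ov{D_1\cup D_2}$. Since $q_1\equiv q_2\equiv 1$ there, both scattered fields solve the homogeneous Helmholtz equation in $G$; Rellich's lemma together with the unique continuation principle then yields $u_1^{sc}=u_2^{sc}$ in $G$, and hence $u_1=u_2=:u$ in $G$.

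By hypothesis there is a point $P\in\partial G$ which is a corner, edge or circular conic point of one of the obstacles and at which $\partial D_1$ and $\partial D_2$ disagree. Using $P\in\partial G$, the local structure supplied by Definitions \ref{def-2d}--\ref{def-3d}, and a standard topological argument (after relabelling $D_1$ and $D_2$ if necessary), we may assume that $P$ is such a point of $D_2$ and that there is $\epsilon>0$ with $\ov{B_\epsilon(P)}\cap\ov{D_1}=\emptyset$ such that, writing $B:=B_\epsilon(P)$, one has $B\ba\ov{D_2}=B\cap G$ and — up to a rigid motion and, in the curvilinear case, a $C^2$-diffeomorphism $\Psi$ with $\Psi(P)=O$, $\nabla\Psi(P)=I$ — the set $B\cap D_2$ is $\K_\omega\cap B_\epsilon$ in $\R^2$, a wedge $(\K_\omega\cap B_1)\times(-1,1)$ (after rescaling) or a circular cone $\CC_\omega\cap B_\epsilon$ in $\R^3$. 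Because $q_1\equiv 1$ on $B$, the total field $u_1$ solves $\Delta u_1+k^2 u_1=0$ in all of $B$ and is therefore real-analytic on $B$; in particular it is smooth across $\partial D_2\cap B$, so $u_1^+=u_1^-$ and $\partial_\nu u_1^+=\partial_\nu u_1^-$ there. Meanwhile $u_2$ solves $\Delta u_2+k^2 q_2 u_2=0$ in $D_2\cap B$, it satisfies the transmission conditions \eqref{TE} on $\partial D_2\cap B$, and $u_2=u_1$ on $B\ba\ov{D_2}=B\cap G$. Combining these facts, the pair $(w,v):=\bigl(u_1|_{D_2\cap B},\,u_2|_{D_2\cap B}\bigr)\in H^2(D_2\cap B)\times H^2(D_2\cap B)$ satisfies
\begin{equation*}
\Delta w+k^2 w=0,\qquad \Delta v+k^2 q_2\,v=0\quad\text{in }D_2\cap B,\qquad w=v,\quad\partial_\nu w=\partial_\nu v\quad\text{on }\partial D_2\cap B,
\end{equation*}
and moreover $w=u_1$ extends as a solution of the Helmholtz equation into the \emph{full} neighbourhood $B$ of the corner/edge/conic point $P$. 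Since $q_2$ satisfies Assumption (a) at $P$, Lemma \ref{lem} (for a planar corner; Lemmas \ref{lem:corner-2D}, \ref{lem:edge-3D}, \ref{lem:curvilinear} for the remaining planar, edge and curvilinear cases, and the circular-cone lemma of Section \ref{sec:cone} otherwise) forces $w$, hence $u_1$, to vanish identically in a neighbourhood of $P$.

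To conclude, note that $B'\cap G\neq\emptyset$ for every small ball $B'$ centred at $P$ (because $P\in\partial G$) and $B'\cap G\subset D_1^e$; thus $u_1$ vanishes on a non-empty open subset of the connected open set $D_1^e$, in which it solves $\Delta u_1+k^2 u_1=0$ (recall $q_1\equiv 1$ on $D_1^e$). By real-analyticity of solutions of the homogeneous Helmholtz equation, $u_1\equiv 0$ in $D_1^e$, so that $u_1^{sc}=-u^{in}$ in $D_1^e$. Since $D_1^e$ contains a neighbourhood of infinity and $u_1^{sc}$ is radiating, the entire solution $-u^{in}$ satisfies the Sommerfeld radiation condition; but a nonzero entire solution of the Helmholtz equation cannot satisfy that condition (Green's first identity on $B_R$ as $R\to\infty$ forces $u_1^\infty\equiv 0$, and then Rellich's lemma forces $-u^{in}\equiv 0$ in $D_1^e$, hence $u^{in}\equiv 0$ in $\R^n$ by analyticity). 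This contradicts $u^{in}\not\equiv 0$ and proves the theorem.

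The only genuinely delicate point is the localisation step: one must exploit $P\in\partial G$ and the local shape of $\partial D_2$ at the singular point to obtain a ball $B$ on which, simultaneously, $q_1\equiv 1$, the set $D_2\cap B$ is exactly a (curvilinear) sector, wedge or circular cone, and the Cauchy data of $u_1$ and $u_2$ match on $\partial D_2\cap B$. Once this local coupling problem with an analytically extendable component $w$ has been set up, all of the hard analytic work is absorbed into Lemmas \ref{lem}, \ref{lem:corner-2D}, \ref{lem:edge-3D}, \ref{lem:curvilinear} and their circular-cone analogue. Theorem \ref{TH1} is obtained by the very same reasoning, applied at a corner, edge or conic point of $D$ under the assumption $u^{sc}\equiv 0$, which gives $u=u^{in}$ in $D^e$ and hence the above coupling problem with $w=u^{in}$, a component that is automatically entire.
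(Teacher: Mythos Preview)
Your argument is correct and follows the same route as the paper's proof: contradiction, Rellich's lemma to identify the scattered (hence total) fields on the unbounded component, localisation at the distinguished singular point $P\in\partial D_2\cap\partial G$ with $B_\epsilon(P)\cap\overline{D_1}=\emptyset$, and then an application of Lemma~\ref{lem} to force $u_1\equiv 0$ and derive $u^{in}\equiv 0$.

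One small slip in how you invoke Lemma~\ref{lem}: you set up the coupling problem for $(w,v)$ on $D_2\cap B$ with only the zeroth and first order transmission conditions, whereas the lemma is stated for functions in $H^2(B_\epsilon)$ solving the two Helmholtz equations on the \emph{full} ball and with $\partial_\nu^j(u_1-u_2)=0$ on $\Gamma_\epsilon$ for $j=0,\dots,l+1$. Both points are immediately repaired: $u_2$ solves $\Delta u_2+k^2q_2u_2=0$ on all of $B$ (with $q_2\equiv 1$ outside $D_2$), and since $u_1=u_2$ on the open set $B\setminus\overline{D_2}$ while $u_1$ is analytic and $u_2\in C^{l+1,\alpha}(\overline{B})$ by Proposition~\ref{P1}, the higher-order normal traces of $u_1-u_2$ on $\partial D_2\cap B$ vanish automatically.
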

\begin{figure}
\begin{center}
\scalebox{0.35}{\includegraphics{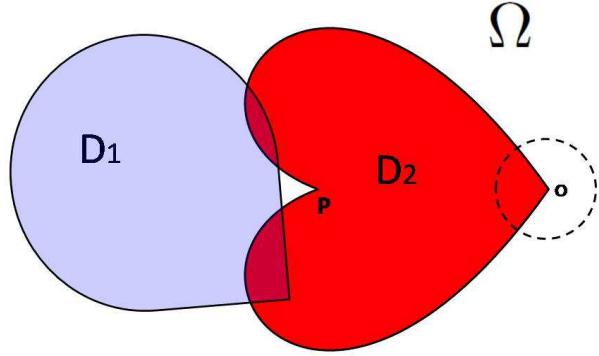}}
 \caption{ $D_1$ and $D_2$ cannot generate the same far-field pattern due to the presence of the corner point $O\in(\partial D_2\backslash\partial D_1)\cap \partial \Omega$, where $\Omega$ is the unbounded component of $\R^2\backslash\overline{(D_1\cup D_2)}$.
 The corner point $P$ lies on $\partial D_2\backslash\partial D_1$, but $P\notin \partial \Omega$.}
\label{Corner2}
\end{center}
\end{figure}
Clearly, the geometrical assumptions in Theorem \ref{TH2} are fulfilled if $D_1$ and $D_2$ are convex curvilinear polygons or polyhedra whose singular boundary points do not coincide. In particular, the latter always holds if $D_1$ and $D_2$ are two distinct convex polygons and polyhedra with piecewise flat boundaries.  Hence, we obtain the following global uniqueness results for the inverse scattering problem.
\begin{cor}\label{Corollary}
If the potential fulfills the assumption (a) near each corner resp. vertex, then
the shape of a convex penetrable polygon resp. polyhedron with flat sides can be uniquely determined by a single far-field pattern.
\end{cor}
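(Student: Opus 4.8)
The plan is to obtain Corollary~\ref{Corollary} as an almost immediate consequence of Theorem~\ref{TH2}; the only extra ingredient needed is an elementary convex-geometric fact that locates a suitable singular boundary point.

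Suppose, for contradiction, that $D_1\neq D_2$ are convex penetrable polygons resp.\ polyhedra with flat sides, that each $q_j$ satisfies Assumption~(a) near every corner resp.\ vertex of $D_j$, and that $(D_1,q_1)$ and $(D_2,q_2)$ produce the same far-field pattern for some incident wave. Since $D_1$ and $D_2$ are distinct open convex polytopes, their support functions $h_j(d):=\sup_{x\in D_j}x\cdot d$ differ at some $d_0\in\s^{n-1}$; after relabelling I may assume $h_2(d_0)>h_1(d_0)$, and by continuity of support functions this persists for all $d$ in a neighbourhood $U$ of $d_0$. The linear form $x\mapsto x\cdot d$ fails to attain its maximum over $D_2$ at a single vertex only for $d$ in the union of the normal cones of the positive-dimensional faces of $D_2$, which is a set of measure zero on $\s^{n-1}$; hence I can pick $d\in U$ for which the maximiser is a single vertex $P$ of $D_2$. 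Then $P\cdot d=h_2(d)>h_1(d)\geq x\cdot d$ for every $x\in\overline{D_1}$, so $P\notin\overline{D_1}$, and the open half-space $\{x:x\cdot d>P\cdot d\}$ --- which is unbounded and meets neither $\overline{D_1}$ nor $\overline{D_2}$ --- lies in the unbounded component $\Omega$ of $\R^n\backslash\overline{D_1\cup D_2}$, whence $P\in\partial\Omega$.

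Next I would fix the singular point to which Theorem~\ref{TH2} is applied. For $n=2$ the vertex $P$ is a corner point of $D_2$ (the interior angle of a convex polygon lies in $(0,\pi)\subset(0,2\pi)\backslash\{\pi\}$), and I set $O^\ast:=P$. For $n=3$ I pick an edge $e$ of $D_2$ issuing from $P$ and a point $O^\ast:=Q$ in its relative interior so close to $P$ that $Q\cdot d>h_1(d)$ still holds; then $Q\notin\overline{D_1}$, $Q$ is an edge point of $D_2$ in the sense of Definition~\ref{def-3d}, and a short point-set argument --- based on the local connectedness of the complement of a convex body at each of its boundary points, together with $P\in\partial\Omega$ and $Q\notin\overline{D_1}$ --- gives $Q\in\partial\Omega$. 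In both cases $O^\ast\in\partial D_2\backslash\partial D_1$ is a corner resp.\ edge point lying on $\partial\Omega$. To invoke Theorem~\ref{TH2} I must also know that $q_1,q_2$ satisfy Assumption~(a) at every corner, edge and circular conic point: a flat-sided convex polytope has no conic points, the vertices resp.\ corners are covered by hypothesis, and an edge point $R$ near a vertex $P_0$ is covered because $\nabla^l(q-1)$ is continuous up to $\overline{D}$ near $P_0$ (as $q\in C^{\,l,s}(\overline{D\cap B_\epsilon(P_0)})$), so the nonvanishing at $P_0$ propagates to all nearby points of $\overline{D}$; letting $R$ range over the compact edges settles this. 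Theorem~\ref{TH2} then yields that the two far-field patterns cannot coincide for any incident wave --- contradicting the assumption --- so $D_1=D_2$, which is the assertion of the Corollary.

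The main obstacle, such as it is, lies in the geometric step above: one must ensure that whenever $D_1\neq D_2$ the discrepancy can be read off at a single vertex of one polytope that lies outside the closure of the other and on the boundary of $\Omega$. This is why I would argue through support functions rather than directly with $\partial D_2\backslash\overline{D_1}$, which may fail to contain any lower-dimensional face of $\partial D_2$; it is also why the three-dimensional case needs the passage from a vertex to a nearby edge point, Theorem~\ref{TH2} being stated for edge points rather than general polyhedral vertices. Everything else is routine bookkeeping on top of Theorem~\ref{TH2}.
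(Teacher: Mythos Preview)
Your argument is correct and follows the paper's route: deduce the corollary from Theorem~\ref{TH2} by exhibiting a singular boundary point of one polytope that lies on $\partial\Omega$ but not on the other polytope. The paper asserts the required geometry in one line; your support-function argument and, in three dimensions, the passage from the vertex $P$ to a nearby edge point $Q$ (needed because Lemma~\ref{lem} is stated for edge points rather than polyhedral vertices) spell out details the paper leaves implicit.

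One minor point: your compactness claim that Assumption~(a) propagates from the vertices to \emph{every} edge point is not justified as written---the continuity neighbourhoods around the two endpoint vertices need not cover the middle of a long edge, and the integer $l$ in Assumption~(a) may differ at the two endpoints. This does no harm, however, since the proof of Theorem~\ref{TH2} invokes Lemma~\ref{lem} only at the single distinguished point, and you have already arranged that (a) holds at your chosen $Q$ by taking it close enough to $P$; simply drop the global compactness remark and appeal to Theorem~\ref{TH2} (or directly to Lemma~\ref{lem}) at $Q$.
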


\section{Proofs of Theorems \ref{TH1} and \ref{TH2}}\label{proof}
We first show the regularity of the total field in H\"older spaces depending on the smoothness of the potential.
\begin{prop}\label{P1}
Let $u\in H^2_{loc}(\R^n)$ be a solution to the Helmholtz equation $(\Delta+q)u=0$ in $\R^n$, $n=2,3$, and let
$\Omega\subset\R^n$ be a bounded Lipschitz domain. Assume $l\in \N_0$.
If $\nabla^{j}q\in L^\infty(\R^n)$ for all $j=0,1,\cdots,l$,  then $u\in C^{l+1,\alpha}(\overline{\Omega})\cap H^{l+2}(\Omega)$ for all $ \alpha\in[0,1)$.
\end{prop}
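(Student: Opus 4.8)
The statement is a local elliptic regularity result and I would prove it by a standard bootstrap argument, working locally in a ball $B_{2\rho}(x_0)\Subset\R^n$ around an arbitrary point $x_0\in\overline{\Omega}$ and then using a finite cover of $\overline\Omega$ together with interior/boundary estimates. Rewrite the Helmholtz equation as the inhomogeneous Laplace equation $\Delta u=-qu=:f$. Since $u\in H^2_{loc}(\R^n)$ we have $u\in H^2(B_{2\rho})$, hence by Sobolev imbedding $u\in L^p$ for every $p<\infty$ (in $n=2,3$), in fact $u\in C^{0,\alpha}$ for all $\alpha\in[0,1)$ when $n\le 3$ already at this stage. With $q\in L^\infty$ we get $f=-qu\in L^p$ for all $p<\infty$, so Calderón–Zygmund estimates for the Laplacian give $u\in W^{2,p}_{loc}$ for all $p<\infty$, and Morrey's imbedding then yields $u\in C^{1,\alpha}_{loc}$ for all $\alpha\in[0,1)$. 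This handles the base case $l=0$.

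The plan for $l\ge 1$ is to iterate. Suppose inductively that $u\in C^{j,\alpha}(\overline\Omega)\cap H^{j+1}(\Omega)$ for some $1\le j\le l$ and all $\alpha\in[0,1)$; I want to gain one more derivative. For the interior Hölder gain I would differentiate the equation: for a multi-index $|\beta|\le j-1$, $\Delta(\partial^\beta u)=\partial^\beta f=-\sum_{\gamma\le\beta}\binom{\beta}{\gamma}(\partial^\gamma q)(\partial^{\beta-\gamma}u)$ by the Leibniz rule, and every term on the right is a product of something in $L^\infty$ (namely $\partial^\gamma q$, using $\nabla^i q\in L^\infty$ for $i\le l$) with something in $C^{j-|\beta|,\alpha}\subset C^{0,\alpha}$ by the inductive hypothesis; hence $\partial^\beta f\in C^{0,\alpha}$, and Schauder interior estimates for $\Delta$ promote $\partial^\beta u$ to $C^{2,\alpha}$, i.e. $u\in C^{j+1,\alpha}_{loc}$. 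For the $H^{j+2}$ gain I differentiate only up to order $j$ instead: for $|\beta|\le j$, $\Delta(\partial^\beta u)\in L^2$ because each term $(\partial^\gamma q)(\partial^{\beta-\gamma}u)$ has $\partial^\gamma q\in L^\infty$ and $\partial^{\beta-\gamma}u\in H^2\subset L^2$; then $L^2$ elliptic regularity gives $\partial^\beta u\in H^2_{loc}$, i.e. $u\in H^{j+2}_{loc}$. Intersecting the two conclusions advances the induction.

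The one point requiring a little care — and the main technical obstacle — is passing from local ($C^{l+1,\alpha}_{loc}$, $H^{l+2}_{loc}$) to the estimate up to the boundary of $\overline\Omega$. Since $\Omega$ is merely Lipschitz, one cannot invoke global Schauder estimates on $\Omega$ itself. But this is not actually needed: the equation $(\Delta+q)u=0$ holds on all of $\R^n$, not just on $\Omega$, so $\overline\Omega$ sits in the \emph{interior} of the region where the equation is valid. Thus I would apply only interior estimates, on a slightly larger bounded domain $\Omega'$ with $\overline\Omega\subset\Omega'\Subset\R^n$: cover the compact set $\overline\Omega$ by finitely many balls $B_{\rho_i}(x_i)$ with $B_{2\rho_i}(x_i)\subset\Omega'$, apply the interior regularity above in each, and conclude $u\in C^{l+1,\alpha}(\overline\Omega)\cap H^{l+2}(\Omega)$ by taking the max over the cover. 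In the write-up I would be explicit that the Lipschitz hypothesis on $\Omega$ is used only to make $H^{l+2}(\Omega)$ and $C^{l+1,\alpha}(\overline\Omega)$ well-defined function spaces, while the regularity itself is purely interior. The remaining ingredients — the Calderón–Zygmund $W^{2,p}$ estimate, the Schauder $C^{2,\alpha}$ estimate, Morrey's and the Sobolev imbeddings in dimension $n\le 3$ — are all classical and I would simply cite them.
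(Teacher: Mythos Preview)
Your overall bootstrap strategy is correct and close in spirit to the paper's, but there is a slip in the Schauder step. You assert that each term $(\partial^\gamma q)(\partial^{\beta-\gamma}u)$ lies in $C^{0,\alpha}$ because it is a product of an $L^\infty$ function with a $C^{0,\alpha}$ function. That implication is false in general: an $L^\infty$ function times a H\"older function need not even be continuous. The fix is immediate once you observe that for $|\gamma|\le|\beta|\le j-1\le l-1$ one also has $\nabla(\partial^\gamma q)\in L^\infty$, so $\partial^\gamma q$ is in fact Lipschitz, and $C^{0,1}\cdot C^{0,\alpha}\subset C^{0,\alpha}$ does hold. With this correction your argument goes through.

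The paper's proof is a bit more economical: instead of running two parallel tracks (Schauder for the H\"older gain, $L^2$ elliptic regularity for the Sobolev gain), it works entirely in $W^{k,p}$ for all finite $p$. Using only the product rule $W^{l,\infty}\cdot W^{j,p}\subset W^{j,p}$ for $j\le l$ together with interior $L^p$ elliptic regularity, one bootstraps to $u\in W^{l+2,p}_{loc}(\R^n)$ for every $p\ge 2$, and then extracts both $H^{l+2}$ (take $p=2$) and $C^{l+1,\alpha}$ (Morrey embedding with $p$ large) in one stroke at the end. This sidesteps Schauder estimates and the H\"older product issue altogether. Your remark that only interior estimates are needed because the equation holds on all of $\R^n$, with the Lipschitz hypothesis on $\Omega$ serving only to make the function spaces well defined, is exactly right and is implicit in the paper as well.
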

 \begin{proof}
 By Sobolev's imbedding theorem (see e.g.,\cite{GT}), we know that $u\in C(\R^n)$ for $n=2,3$. Therefore $qu\in L_{loc}^p(\R^n)$ for all $p\geq 2$, and by elliptic regularity $u\in W^{2,p}_{loc}(\R^n)$. Moreover, again applying Sobolev's imbedding theorem (see \cite[Theorem 7.26]{GT}) yields $W^{2,p}(\Omega)\subset C^{1,\alpha}(\overline{\Omega})$ for $\alpha=2-n/p-1$. This implies the assertion with $l=0$ by choosing the index $p\geq 2$ arbitrarily large. In the general case of $l\geq 1$, one can prove by induction that
  $qu\in W_{loc}^{l,p}(\R^n)$ for all $p\geq 2$, giving rise to  $u\in W^{l+2,p}_{loc}(\R^n)$ and  $u\in C^{l+1,\alpha}(\overline{\Omega})$ for all $\alpha\in[0,1)$.
\end{proof}

The proofs of our results essentially rely on the following lemma.
\begin{lem}\label{lem} Let $D\subset \R^n$ ($n=2,3$) be a bounded domain. Assume that
  $q\in L^\infty(D)$ satisfies the assumption (a) near the boundary point $O\in\partial D$ and that $q\equiv 1$ in $\R^n\backslash\overline{D}$.
 It is supposed that one of the following cases holds:
  \begin{description}
 \item[(i)] $O$ is a planar corner point if $D\subset \R^2$ is a curvilinear polygon;
 \item[(ii)] $O$ is an edge point if $D\subset \R^3$ is a curvilinear polyhedron;
 \item[(iii)] $O$ is  the vertex of some circular cone if $D\subset \R^3$ is a circular conic domain.
 \end{description}
 For $\epsilon>0$ sufficiently small, let $\Gamma_\epsilon= \partial D\cap B_\epsilon$
  be a sub-boundary of $\partial D$ containing $O$.
  If the solution pair $u_j\in H^2(B_\epsilon)$ ($j=1,2$) solves the coupling problem
\be\label{eq:30}\begin{split}
&\Delta u_1+ k^2 u_1=0,\qquad \Delta u_2+ k^2 q\, u_2=0 \quad \quad\mbox{in}\quad B_\epsilon,\\
&\partial_\nu^{j}(u_1-u_2)=0\quad \mbox{on}\quad \Gamma_{\epsilon},\; \qquad\qquad j=0,1,2,\cdots,l+1,
\end{split}
\en
then $u_1=u_2\equiv 0$ in $B_\epsilon$. Here the number $l\in \N_0$ is specified by the regularity of $q$ in the assumption (a).
\end{lem}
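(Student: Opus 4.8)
The plan is to set $w:=u_1-u_2$. Subtracting the two equations in \eqref{eq:30} yields
\[
\Delta w+k^2 w=k^2(q-1)\,u_2\qquad\text{in }B_\epsilon,
\]
with right-hand side supported in $\overline{D\cap B_\epsilon}$. By Proposition \ref{P1} and the regularity of $q$ in Assumption (a) (which supplies $\nabla^j q\in L^\infty(B_\epsilon)$ for $0\le j\le l$) we have $u_1,u_2,w\in C^{l+1,\alpha}(\overline{B_{\epsilon'}})$ for every $\epsilon'<\epsilon$ and $\alpha\in[0,1)$. Since $w$ vanishes together with $\partial_\nu w,\dots,\partial_\nu^{l+1}w$ on $\Gamma_\epsilon$, and $\Gamma_\epsilon$ contains a $C^2$ face through $O$, restricting to that face annihilates all tangential derivatives of the degree-$(l+1)$ Taylor polynomial of $w$ at $O$, while the $l+1$ normal-derivative conditions annihilate the remaining coefficients; hence this Taylor polynomial vanishes and $w(x)=O(|x|^{l+1+\alpha})$ as $x\to O$.

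Next I would argue by contradiction, assuming $u_1=u_2\equiv0$ fails. If $w\equiv0$ near $O$, then $u_1=u_2$ there and hence $k^2(q-1)u_1\equiv0$ near $O$; by Assumption (a), $q$ is continuous near $O$ and does not coincide with $1$ on any punctured neighbourhood of $O$ (its leading homogeneous part $H_l$ being a nonzero polynomial of degree $l$, with $H_0=q(O)-1$ when $l=0$), so $u_1$ vanishes on a nonempty open set and therefore $u_1\equiv u_2\equiv0$ in $B_\epsilon$ by unique continuation---a contradiction. Thus $w\not\equiv0$ near $O$. Moreover $u_1\not\equiv0$: otherwise $w=-u_2$ solves $(\Delta+k^2q)w=0$ with Cauchy data vanishing to order $l+1$ on $\Gamma_\epsilon$, so $w=O(|x|^{l+1+\alpha})$ and its leading homogeneous asymptotic term at $O$ is harmonic (the source contributing only at higher order) with vanishing Cauchy data on a face, hence $0$---forcing $w\equiv0$ near $O$, a contradiction. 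Let $N\in\N_0$ be the vanishing order of $u_1$ at $O$, so $u_1=u_1^{(N)}+(\text{higher order})$ with $u_1^{(N)}\ne0$ a homogeneous harmonic polynomial.

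I then straighten $O$ by the diffeomorphism $\Psi$ and reduce to the model geometry: the sector $\K_\omega$ in $\R^2$, the wedge $(\K_\omega\cap B_1)\times(-1,1)$ for a 3D edge, or the circular cone $\CC_\omega$; the Helmholtz operators become elliptic operators whose principal part at $O$ is $\Delta$. For an edge I apply a partial Fourier transform in the edge variable (Section \ref{3D-edges}), and for a circular cone I separate variables in spherical coordinates (Section \ref{sec:cone}), so that in every case the local analysis is governed by the inhomogeneous Laplace equation in a planar sector or on a spherical cap. By the weighted-H\"older-space solvability and regularity theory of Sections \ref{Regularity} and \ref{Regularity-C}, $w$ possesses a leading homogeneous asymptotic term $z$ at $O$. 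Repeatedly using that a nonzero homogeneous harmonic function cannot have vanishing Cauchy data on a face, that $w=O(|x|^{l+1+\alpha})$, and that $u_2=u_1-w$ has leading term $u_1^{(N)}$, one matches homogeneous components in the displayed equation and finds that $z$ is homogeneous of degree $l+N+2$ and solves
\[
\Delta z=k^2\,H_l\,u_1^{(N)}\ (\not\equiv0),\qquad \partial_\nu^j z=0\ \text{ on the face(s) of the model cone},\quad j=0,\dots,l+1.
\]

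The core of the argument---and the step I expect to be the main obstacle---is to show that this overdetermined problem (Cauchy data vanishing to order $l+1$ on the face(s), against a nonzero homogeneous right-hand side) has no solution when $\omega\in(0,2\pi)\setminus\{\pi\}$ (respectively $2\omega\ne\pi$ for the circular cone). This is exactly where the exclusion of the flat case and the number $l+1$ of matched derivatives enter: in $\R^2$ it reduces to an inhomogeneous second-order ODE on the cross-sectional interval $(0,\omega)$ with $l+2$ Cauchy conditions at each endpoint, which are incompatible with a nonzero source unless $\omega=\pi$; the edge case then follows from the Fourier reduction, and the circular-cone case from the analogous analysis of Legendre-type ODEs on the spherical cap. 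Since $z$ would be such a solution with right-hand side $k^2H_lu_1^{(N)}\ne0$, we reach a contradiction, and therefore $u_1=u_2\equiv0$ in $B_\epsilon$. Proving the non-solvability of these overdetermined cone problems for all admissible opening angles and all $l\in\N_0$---carried out via the auxiliary propositions in the appendix---is the technically demanding part; the remaining steps (the substitution, the H\"older regularity, the vanishing of the Taylor polynomial, and the bookkeeping of homogeneous components) are comparatively routine.
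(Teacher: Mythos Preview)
Your strategy captures the broad architecture of the paper's argument (reduce to a model cone, analyze leading homogeneous terms, derive a contradiction from an overdetermined Cauchy problem), but it misses the key device that makes the paper's proof go through, and this creates a genuine gap.

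The paper does \emph{not} work with $w=u_1-u_2$ directly. It sets $\tilde v:=\nabla^{\,l}(u_1-u_2)$, cuts off, and studies $\Delta v=f$ with the \emph{standard} Cauchy data $v=\partial_\nu v=0$ on $\partial\K$ (resp.\ $\partial\CC$). Passing to $\nabla^{\,l}$ has two consequences that you do not get when you keep $w$:
\begin{itemize}
\item[(a)] the $l+2$ transmission conditions $\partial_\nu^{\,j}(u_1-u_2)=0$, $j=0,\dots,l+1$, collapse to the ordinary pair $v=\partial_\nu v=0$;
\item[(b)] the leading part of the inhomogeneity becomes $p_m=k^2\,\nabla^{\,l}h(O)\,u_{1,m}$, i.e.\ a \emph{constant} vector times the harmonic polynomial $u_{1,m}$. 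Each component of $p_m$ is therefore harmonic, so the leading term $q_{m+2}$ of $v$ is \emph{biharmonic}, and the rigidity step is exactly Proposition~\ref{P6} (resp.\ \ref{P6C}): a biharmonic function with vanishing Cauchy data on $\partial\K$ (resp.\ $\partial\CC$) must vanish.
\end{itemize}
In your approach the purported leading term $z$ would satisfy $\Delta z=k^2 H_l\,u_1^{(N)}$ with $H_l$ a genuine degree-$l$ polynomial; for $l\ge1$ this right-hand side is in general \emph{not} harmonic, so $z$ is not biharmonic and the appendix propositions do not apply. The rigidity statement you would need (``$\Delta z=H_l\,u_1^{(N)}$, $\partial_\nu^{\,j}z=0$ for $j\le l+1$, admits no homogeneous solution for $\omega\neq\pi$'') is a different and unproven lemma.

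There is a second, related gap. You claim that $w$ ``possesses a leading homogeneous asymptotic term $z$'' of degree $l+N+2$ by invoking the weighted H\"older theory as a black box. But $w$ is only $C^{\,l+1,\alpha}$, so beyond degree $l+1$ there is no Taylor polynomial; the existence and identification of the next asymptotic term at the corner is precisely what requires the iterative argument in the weighted spaces $\Lambda_\beta^{2,\alpha}$. The paper does not extract a single leading term in one shot; it proves by induction on $m$ that $v\in\Lambda_{1-m}^{2,\alpha}$ and $\nabla^{m}u_1(O)=0$ for every $m$, at each step isolating the obstruction $p_m$, building the special solution of Proposition~\ref{P5-2d} (resp.\ \ref{Special-solution-C}), applying Proposition~\ref{P4} (resp.\ \ref{P4C}) to pass from weight $1-m$ to $-m$, and then invoking the biharmonic rigidity to kill $p_m$. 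Your ``matching of homogeneous components'' skips this machinery.

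In short: the substitution $v=\nabla^{\,l}(u_1-u_2)$ is not cosmetic---it is what reduces the problem to the biharmonic rigidity that the appendix actually proves, and the argument is an induction over all orders rather than a single contradiction at the vanishing order $N$.
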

Note that when $l=0$, the transmission conditions in (\ref{eq:30}) are reduced to the classical TE transmission conditions:
\ben
u_1=u_2,\quad \partial_\nu u_1=\partial_\nu u_2\qquad \mbox{on}\quad \Gamma_{\epsilon}.
\enn
Lemma \ref{lem} with $l=0$ can be interpreted as follows: The Cauchy data of non-trivial solutions to the two Helmholtz equations in (\ref{eq:30}) do not coincide on the boundary $\G_{\epsilon}$  if the values of the potentials involved are not identical at $O\in \G_{\epsilon}$. In other words, there are non-trivial solutions to the Helmholtz equation
  $\Delta u_1+ k^2 u_1=0$
  in $ D^e\cap B_\epsilon$ that cannot be analytically extended into a full neighborhood of $O$ due to both the interface singularity at $O\in \Gamma_{\epsilon}$ and the discontinuity of $q$ at $O$. For $l\geq 1$, the transmission conditions in (\ref{eq:30}) are well defined by Proposition  \ref{P1}.
Below we shall prove our results by applying Lemma \ref{lem}.

{\bf Proof of Theorem \ref{TH1}.} Consider the scattering problem (\ref{eq:0})-(\ref{eq:farfield}) for the penetrable obstacle $D\subset \R^n$. Denote by $O\in\partial D$ the planar corner point in $\R^2$,  the edge point or the circular conic point in $\R^3$.
By Proposition \ref{P1}, the total field $u$ has the regularity
\ben
u\in C^{l+1,\alpha}(\overline{D\cap B_\epsilon})\cap H^{l+2}( D\cap B_\epsilon)\quad\mbox{for all}\quad \alpha\in[0,1)
\enn
under the assumption (a). Hence, if the scattered field vanishes identically, there hold the transmission conditions
\ben
\partial_{\nu}^{j}\, u=\partial_{\nu}^{j}\, u^{in}\quad\mbox{on}\quad \Gamma_{\epsilon},\quad j=0,1,\cdots,l+1,
\enn
where $\Gamma_{\epsilon}\subset\partial D$ contains $O$.
Now, applying Lemma \ref{lem} to $u_1=u^{in}$ and $u_2=u$ gives $u^{in}\equiv0$ in $B_\epsilon$.
By unique continuation, $u^{in}\equiv 0$ in $\R^n$, which is a contradiction.
\hfill$\Box$

{\bf Proof of Theorem \ref{TH2}.} Denote by $(D_j, q_j)$ ($j=1,2$) the two penetrable obstacles and the associated potentials.
If the far-field patterns incited by some incoming wave
corresponding to $(D_1,q_1)$ and $(D_2,q_2)$ coincide, then by Rellich's lemma the scattered fields
must also coincide in the unbounded component $\Omega$ of $\R^n\backslash\overline{(D_1\cup D_2)}$.
Suppose without loss of generality that there exists a corner $O\in \partial D_2\cap\partial \Omega$
such that $O\notin \partial D_1$ (see Figure \ref{Corner2}). Then, one can find a small $\epsilon>0$ such that $ D_1\cap B_\epsilon=\emptyset$.
Applying Lemma \ref{lem} to the domain $D:= D_2\cap B_\epsilon$ with
$u_j$ being the total fields corresponding to $(D_j, q_j)$, $j=1,2,$ we finally get $u_1\equiv 0$ in $D$ and thus
$u_1\equiv 0$ in $\R^n$. This implies that the scattered field $u_1^{sc}:=u_1-u^{in}$
can be extended to the whole space as a solution to the Helmholtz equation
with the wavenumber $k^2$. Hence, $u_1^{sc}\equiv 0$ and thus $u^{in}\equiv 0$ in $\R^n$.
This contradiction implies that $(D_1,q_1)$ and $(D_2,q_2)$ cannot generate identical far-field patterns.
\hfill$\Box$

\begin{rem}\label{rem:point-source}
The proofs of our results carry over to all non-vanishing incident fields that satisfy the Helmholtz equation (\ref{eq:0}) in a neighborhood of $D$, including the incident point source waves of the form
\ben
u^{in}(x;y)=\left\{\begin{array}{lll}
\frac{i}{4}H_0^{(1)}(k|x-y|),&& n=2,\\
\frac{e^{ik|x-y|}}{4\pi|x-y|},&& n=3,
\end{array}\qquad x\neq y,\quad y\in D^e.
\right.
\enn Here $H_0^{(1)}$ denotes the Hankel function of the first kind of order zero.
\end{rem}
\begin{rem}\label{remark}
It is not straightforward to generalize the global uniqueness result of Corollary \ref{Corollary} to the class of all curvilinear  polygons and polyhedra, because in general one cannot always find a singular boundary point in a neighbourhood of which
the wave field is analytic; see the proof of Theorem \ref{TH2}.
 Due to the same reason, our approach for proving Corollary \ref{Corollary} does not apply to non-convex polygons and polyhedra. For a non-convex scatterer, the unique determination of its convex hull follows from the proof of Theorem \ref{TH2}.
     We refer to \cite{Rondi05,CY,EY06,HL14,LHZ06} where non-convex impenetrable polygons and polyhedra were treated, relying on reflection principles for the Helmholtz equation in combination with properties of incident  plane or point source waves.
\end{rem}
\begin{rem}
Lemma \ref{lem} does not hold in the absence of interface singularities on $\Gamma_{\epsilon}$, for instance, if $\Gamma_{\epsilon}$ is an analytic surface. To see this, we let $l=0$, $q|_{D}\equiv q_0\neq 1$, and suppose that $\Gamma_{\epsilon}=\{-\epsilon<x_1<\epsilon\}\subset\R^2$ is a line segment. Then it is easy to check that
\ben
u_1=e^{-ik x_2}+\frac{1-q_0}{1+q_0}e^{ik x_2},\quad u_2=\frac{2}{1+q_0}e^{-ikq_0x_2},
\enn are non-trivial solutions to (\ref{eq:30}). In fact, $u_1$ and $u_2$ denote respectively the unique total and transmitted fields in the upper and lower half spaces incited by the incoming wave $\exp(-ik x_2)$ incident onto $x_2=0$ from above.
\end{rem}

The rest of this paper is devoted to the proof of Lemma \ref{lem} for curvilinear polygons and polyhedra in Sections \ref{sec:2D}-\ref{sec: curvilinear}, and for circular cones in Section \ref{sec:cone}.
In the case of $l=0$ and a real-analytic refractive index $q$ on $\overline{D\cap B_\epsilon}$, an alternative and more straightforward proof was presented in \cite{ElHu2015} for polygons and polyhedra with flat surfaces.

\section{Corners in 2D always scatter}\label{sec:2D}
This section is concerned with the acoustic scattering from a penetrable polygon with a piecewise linear boundary in $\R^2$.
The curvilinear polygons will be treated later in Section \ref{sec: curvilinear}. Our approach relies on the singularity analysis of the inhomogeneous Laplace equation in a sector.
We refer to the fundamental paper \cite{Kondratiev} and the monographs \cite{Grisvard,MNP,NP} for a general regularity theory of elliptic boundary value problems in domains with non-smooth boundaries.

\subsection{Solvability of the Laplace equation in a sector}\label{Regularity}

We introduce two classes of weighted spaces on the sector $\K$ introduced in Section \ref{Results}. For $\kappa\in \N_0$ and $\beta\in \R$,
the weighted Sobolev spaces $V_\beta^{\kappa}(\K)$ are defined as the completion of $C_0^\infty(\K)$ with respect to the norm
\ben
||u||_{V_\beta^{\kappa}(\K)}=\left\{\sum_{j\in \N_0, j\leq \kappa}\int_{\K} r^{2(\beta-\kappa+j)}\,|\nabla^j_x\,u(x)|^2\,dx \right\}^{1/2}.
\enn Denote by $\Lambda_\beta^{\kappa,\alpha}(\K)$ the weighted H\"older spaces endowed with the norm
\ben
||u||_{\Lambda_\beta^{\kappa,\alpha}(\K)}&=&\sup_{x,y\in \K } |x-y|^{-\alpha}\,\big| |x|^\beta \nabla^{\kappa}_x\,u(x)- |y|^\beta \nabla^{\kappa}_y\,u(y)\big| \\
&&+\sup_{x\in \K } \sum_{j\in\N_0,j\leq \kappa} |x|^{\beta-\alpha-\kappa+j}\,|\nabla^j_x u(x)|
\enn
for $\alpha\in(0,1)$. If $u\in \Lambda_\beta^{\kappa,\alpha}(\K)$, then $\nabla^j u\in \Lambda_\beta^{\kappa-j,\,\alpha}(\K)$ for all $j=0,1,\cdots, \kappa$. In addition,
the inclusion
 $\Lambda_\beta^{\kappa,\alpha}(\K)\subset \Lambda_{\beta+1}^{\kappa,\alpha}(\K)$ holds for functions with a compact support in $\K$.

Let $\Delta_D$ resp. $\Delta_N$ be the operator of the Dirichlet resp. Neumann problem corresponding to the inhomogeneous Laplace equation with the homogeneous boundary condition on $\partial \K$. In this subsection the operators $\Delta_D$ and $\Delta_N$ will act on the spaces
\ben
\Lambda_{\beta, D}^{\kappa,\alpha}(\K)&:=&\left\{u\in  \Lambda_{\beta}^{\kappa,\alpha}(\K): u=0\;\mbox{on}\;\partial \K  \right\},\\
V_{\beta, D}^{\kappa}(\K)&:=&\left\{u\in  V_{\beta}^{\kappa}(\K): u=0\;\mbox{on}\;\partial \K  \right\},
\enn and
\ben
\Lambda_{\beta, N}^{\kappa,\alpha}(\K)&:=&\left\{u\in  \Lambda_{\beta}^{\kappa,\alpha}(\K): \partial_\nu u=0\;\mbox{on}\;\partial \K  \right\},\\
V_{\beta, N}^{\kappa}(\K)&:=&\left\{u\in  V_{\beta}^{\kappa}(\K): \partial_\nu u=0\;\mbox{on}\;\partial \K  \right\}
\enn respectively.
In the following we state solvability results for the Laplace equation in the weighted spaces $V^2_{\beta}(\K)$ and $\Lambda_\beta^{2,\alpha}(\K)$. 

\begin{prop}\label{P2}(\cite[Chapter 2, Proposition 2.5]{NP})
\begin{description}
\item[(i)] The operator $\Delta_D:  V_{\beta, D}^{2}(\K)\rightarrow V_{\beta}^{0}(\K)$ is an isomorphism if $ 1-\beta\neq j\pi/\omega$ for all $j\in \Z\backslash\{0\}$.
\item[(ii)] The operator $\Delta_N:  V_{\beta, N}^{ 2}(\K)\rightarrow V_{\beta}^{0}(\K)$ is an isomorphism if $ 1-\beta\neq j\pi/\omega$ for all $j\in \Z$.
\end{description}
\end{prop}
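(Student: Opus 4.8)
The plan is to prove both statements by the Kondratiev technique, reducing the Laplacian on the sector $\K=\K_\om$ to a one-parameter family of ordinary differential operators on the angular interval $(0,\om)$ by means of the Mellin transform. First I would pass to polar coordinates, where $r^2\Delta=(r\pa_r)^2+\pa_\theta^2$, and rewrite the inhomogeneous equation $\Delta u=f$ in $\K$ as $(r\pa_r)^2u+\pa_\theta^2u=r^2f$ with the homogeneous Dirichlet, resp.\ Neumann, condition at $\theta=0,\om$. Setting $r=e^t$ turns $r\pa_r$ into $\pa_t$, transfers the problem to the half-strip $\{(t,\theta):t\in\R,\ 0<\theta<\om\}$, and (recalling that $dx=r\,dr\,d\theta=e^{2t}dt\,d\theta$ in two dimensions) shows that $u\in V^2_\beta(\K)$ is equivalent to $e^{(\beta-1)t}u(e^t,\cdot)$ lying in $H^2$ of the strip, while $r^2f$ with $f\in V^0_\beta(\K)$ belongs to the same exponentially weighted $L^2$-class. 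Before this, one should check that the boundary traces entering the definitions of $V^2_{\beta,D}(\K)$ and $V^2_{\beta,N}(\K)$ make sense, which follows from a weighted trace theorem on $\K$ (or directly on the strip).

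Next I would apply the Mellin transform in $r$ along the line $\Re\la=1-\beta$ --- equivalently, the Fourier transform in $t$ along the corresponding horizontal line, which is the line picked out by the weight via the Mellin--Plancherel identity. Since $r\pa_r$ becomes multiplication by $\la$, the boundary value problem is diagonalised into the family of two-point problems governed by the operator pencil
\[
\mathcal{A}(\la):=\pa_\theta^2+\la^2\ \ \text{on}\ \ (0,\om),\qquad \text{with homogeneous Dirichlet (resp.\ Neumann) data at }\theta=0,\om .
\]
This is a holomorphic family of Fredholm operators $H^2(0,\om)\cap(\text{b.c.})\to L^2(0,\om)$. Solving the homogeneous equation $v''+\la^2v=0$ with the prescribed boundary data explicitly, one finds that $\mathcal{A}(\la)$ fails to be invertible precisely when $\la\om\in\pi\Z$ and the corresponding solution is non-trivial: for Dirichlet data this happens iff $\la=j\pi/\om$ with $j\in\Z\backslash\{0\}$, and for Neumann data iff $\la=j\pi/\om$ with $j\in\Z$, the extra value $\la=0$ corresponding to the constant eigenfunction. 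Thus the hypothesis $1-\beta\neq j\pi/\om$ (over the stated range of $j$) says exactly that the Mellin line $\Re\la=1-\beta$ misses the spectrum of the pencil.

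Then I would establish, uniformly for $\la$ on that line, the resolvent estimate
\[
\|v\|_{H^2(0,\om)}^2+|\la|^2\,\|v\|_{H^1(0,\om)}^2+|\la|^4\,\|v\|_{L^2(0,\om)}^2\ \le\ C\,\|\mathcal{A}(\la)v\|_{L^2(0,\om)}^2 ,
\]
with $C$ independent of $\la$: for $|\la|$ large this is the standard parameter-elliptic (Agmon-type) estimate for $\pa_\theta^2+\la^2$, obtained by integration by parts, and for $|\la|$ bounded it follows from the continuity of $\la\mapsto\mathcal{A}(\la)^{-1}$ together with the absence of spectrum on the line --- which is exactly where the hypothesis enters. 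The powers of $|\la|$ here are arranged so that, via Mellin--Plancherel and the correspondence $r\pa_r\leftrightarrow\la$, integration of this estimate over the line reconstitutes the three weighted terms of $\|u\|_{V^2_\beta(\K)}$ on the left and $\|\Delta u\|_{V^0_\beta(\K)}$ on the right. Inverting the Mellin transform then produces a bounded operator $V^0_\beta(\K)\to V^2_{\beta,D}(\K)$ (resp.\ $V^2_{\beta,N}(\K)$) which is a two-sided inverse of $\Delta_D$ (resp.\ $\Delta_N$): surjectivity and boundedness of the inverse come from the estimate, and injectivity follows because $\Delta u=0$ forces the Mellin transform $\tilde u(\la,\cdot)$ to lie in $\ker\mathcal{A}(\la)=\{0\}$ for almost every $\la$ on the line.

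The main obstacle will be the bookkeeping of norms in this last step: verifying that the exponentially weighted $H^2$-norm on the strip (with the correct two-dimensional exponent shift) corresponds under the Mellin transform precisely to the parameter-dependent norm on $(0,\om)$ appearing in the resolvent estimate, hence to $\|\cdot\|_{V^2_\beta(\K)}$ up to equivalence, and ensuring that the resolvent bound is genuinely uniform down to small $|\la|$, which is the point at which the condition $1-\beta\neq j\pi/\om$ is used. In the Neumann case one must additionally track the constant mode $\la=0$, which is exactly what distinguishes the range $j\in\Z\backslash\{0\}$ in (i) from $j\in\Z$ in (ii). Alternatively, both claims follow as special cases of the general Kondratiev theory for elliptic boundary value problems in conical domains; see \cite{Kondratiev,Grisvard,MNP,NP}.
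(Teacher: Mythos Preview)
The paper does not prove this proposition at all: it is quoted directly from \cite[Chapter 2, Proposition 2.5]{NP} without argument, so there is no in-paper proof to compare against. Your sketch via the Mellin transform and the operator pencil $\mathcal{A}(\lambda)=\partial_\theta^2+\lambda^2$ on $(0,\omega)$ is precisely the classical Kondratiev argument carried out in the cited reference, and the identification of the critical line $\Re\lambda=1-\beta$ and of the Dirichlet/Neumann eigenvalues $\lambda=j\pi/\omega$ (with $j\in\Z\setminus\{0\}$ resp.\ $j\in\Z$) is correct; as you yourself note at the end, this is exactly the general theory of \cite{Kondratiev,NP} specialised to the planar sector.
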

\begin{prop}\label{P3}(\cite[Chapter 3, Theorem 6.11]{NP})
\begin{description}
\item[(i)] The operator $\Delta_D:  \Lambda_{\beta, D}^{ 2,\alpha}(\K)\rightarrow \Lambda_{\beta}^{0,\alpha}(\K)$ is an isomorphism if $ 2+\alpha-\beta\neq j\pi/\omega$ for all $j\in \Z\backslash\{0\}$.
\item[(ii)] The operator $\Delta_N:  \Lambda_{\beta, N}^{ 2,\alpha}(\K)\rightarrow \Lambda_{\beta}^{0,\alpha}(\K)$ is an isomorphism if $ 2+\alpha-\beta\neq j\pi/\omega$ for all $j\in \Z$.
\end{description}
\end{prop}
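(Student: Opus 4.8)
The statement is the standard Kondratiev-type isomorphism theorem for the Laplacian in a plane sector, carried out in weighted Hölder rather than weighted $L^2$ norms; I would reconstruct its proof along the following lines. Passing to polar coordinates and then to the Mellin transform in $r$ reduces $\Delta$ — after multiplication by $r^2$, which matches the two-unit weight shift between $\Lambda^{2,\alpha}_\beta$ and $\Lambda^{0,\alpha}_\beta$ — to the one-dimensional operator pencil $\mathcal A(\lambda)=\partial_\theta^2+\lambda^2$ acting on functions on $(0,\omega)$ with homogeneous Dirichlet (resp. Neumann) data. An elementary computation shows $\mathcal A(\lambda)$ fails to be invertible exactly at the real numbers $\lambda=j\pi/\omega$ with $j\in\Z\setminus\{0\}$ in the Dirichlet case (eigenfunctions $\sin(j\pi\theta/\omega)$, corresponding to the homogeneous harmonic functions $r^{\pm j\pi/\omega}\sin(j\pi\theta/\omega)$) and at $\lambda=j\pi/\omega$, $j\in\Z$, in the Neumann case (eigenfunctions $\cos(j\pi\theta/\omega)$, including the constant when $j=0$). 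Membership of $u$ in $\Lambda^{2,\alpha}_\beta(\K)$ forces the asymptotics $u=O(r^{\,2+\alpha-\beta})$ at both the vertex and infinity, which in the Mellin picture means one must invert along the vertical line $\real\lambda=2+\alpha-\beta$; the hypothesis $2+\alpha-\beta\neq j\pi/\omega$ is precisely the statement that this line avoids the spectrum of $\mathcal A(\lambda)$.

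Granting that, injectivity is immediate: if $\Delta u=0$ with $u\in\Lambda^{2,\alpha}_{\beta,D}(\K)$ (resp. $\Lambda^{2,\alpha}_{\beta,N}(\K)$), then $\mathcal M u(\lambda,\cdot)$ is holomorphic across the critical line and satisfies $\mathcal A(\lambda)\,\mathcal M u(\lambda,\cdot)=0$ there, so $\mathcal M u\equiv 0$ on the line and hence $u\equiv 0$; equivalently, any such $u$ would have to be a finite superposition of the homogeneous solutions $r^{j\pi/\omega}(\cdots)$, none of which lies in the space when $2+\alpha-\beta\ne j\pi/\omega$. For surjectivity one writes the candidate right inverse as $f\mapsto \mathcal M^{-1}\big(\mathcal A(\lambda)^{-1}\,\mathcal M(r^2 f)\big)$, the transform taken along $\real\lambda=2+\alpha-\beta$, and must show it maps $\Lambda^{0,\alpha}_\beta(\K)$ into $\Lambda^{2,\alpha}_\beta(\K)$. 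This rests on uniform resolvent bounds for the pencil along the critical line — $\|\mathcal A(\lambda)^{-1}\|=O(|\lambda|^{-2})$ with the matching estimates for the lower-order pieces, together with the one-dimensional Schauder estimate on $(0,\omega)$ — and on transferring these bounds through the transform back into the Hölder norm.

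The genuine obstacle, and the point at which the Hölder case is harder than the $L^2$ case of Proposition~\ref{P2}, is precisely this last transfer: the Mellin multiplier $\mathcal A(\lambda)^{-1}$ does not act on Hölder spaces via Plancherel, so one cannot simply mimic the $L^2$ argument. The way I would organise the proof — and the way it is done in \cite{NP} — is to couple the pencil analysis with a dyadic decomposition in $r$: cover $\K\setminus\{O\}$ by the annuli $\K\cap\{2^k<r<2^{k+2}\}$, rescale each to unit size (where $\Delta$ is scale invariant and the weight $r^{\beta-\alpha-\kappa}$ is comparable to the constant $2^{k(\beta-\alpha-\kappa)}$), apply the interior and boundary Schauder estimates for the Laplacian on each annular model domain, and then sum. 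The weight condition $2+\alpha-\beta\ne j\pi/\omega$ is exactly what makes this sum of rescaled estimates converge — it forbids resonance with the homogeneous solutions $r^{j\pi/\omega}$ — while the Mellin step is still needed to produce the particular solution with the correct behaviour at $O$ and at infinity against which the local Schauder bounds are bootstrapped. Assembling the local estimates into the global weighted Hölder norm, and keeping track of the two boundary conditions, is where essentially all the work sits; the full argument is Theorem~6.11 of Chapter~3 of \cite{NP}, which we simply quote here.
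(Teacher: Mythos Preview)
The paper does not prove this proposition at all: it is simply quoted from \cite[Chapter~3, Theorem~6.11]{NP} and used as a black box. Your proposal correctly identifies it as the standard Kondratiev isomorphism theorem in weighted H\"older spaces, gives an accurate sketch of the ingredients (Mellin pencil $\mathcal A(\lambda)=\partial_\theta^2+\lambda^2$, location of the spectrum at $j\pi/\omega$, dyadic rescaling plus Schauder to handle the H\"older norms), and then defers to the same reference --- which is exactly what the paper does.
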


\begin{prop}\label{P4}(\cite[Chapter 2, Proposition 2.12]{NP})
Let $\gamma_1<\gamma\leq  2$  and assume $ 2+\alpha-\beta\neq j\pi/\omega$ for $\beta=\gamma,\gamma_1$ and for all $j\in \N$. Moreover, let $f\in \Lambda_{\gamma}^{0,\alpha}(\K)\bigcap \Lambda_{\gamma_1}^{0,\alpha}(\K)$ and denote by $v_\beta$ the unique solution of the Dirichlet problem $\Delta_D v=f\in \Lambda_\beta^{0,\alpha}(\K)$ in $\Lambda_{\beta,D}^{2,\alpha}(\K)$. Then we have the relation
\be\label{Dirichlet}
v_{\gamma_1}=v_{\gamma}+\sum_j C_j\,r^{j\pi/\omega}\,\sin[(j\pi/\omega)\theta],\quad C_j\in \C,
\en
where the sum is taken over all $j\in\N$ such that $j\pi/\omega\in( 2+\alpha-\gamma, 2+\alpha-\gamma_1)$. For the Neumann problem, the $\sin$ functions in (\ref{Dirichlet}) should be replaced by the $\cos$ functions.
\end{prop}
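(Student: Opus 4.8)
The plan is to reduce the claim to an elementary separation-of-variables analysis of the \emph{homogeneous} Laplace equation in the sector $\K$, exploiting that the difference of the two resolved solutions is harmonic. First I would set $w:=v_{\gamma_1}-v_\gamma$. Since $v_{\gamma_1}$ and $v_\gamma$ both solve $\Delta v=f$ with homogeneous Dirichlet data on $\partial\K$ (they are the solutions furnished by Proposition \ref{P3}(i), whose applicability is guaranteed by the non-resonance hypothesis), $w$ is harmonic in $\K$ and vanishes on $\partial\K$; moreover $w$ is of class $C^{2,\alpha}$ on compact subsets of $\overline{\K}\setminus\{O\}$, hence smooth in $\K$ and up to the two boundary rays. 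The only quantitative information I need from the weighted spaces is the pointwise bound read off from the zeroth-order term of the $\Lambda_\beta^{2,\alpha}$-norm, namely $|v_\beta(x)|\le C|x|^{2+\alpha-\beta}$ for all $x\in\K$; consequently
\[
|w(x)|\le C\big(|x|^{2+\alpha-\gamma}+|x|^{2+\alpha-\gamma_1}\big),\qquad x\in\K,
\]
which, since $\gamma_1<\gamma$, means $|w(x)|=O(|x|^{2+\alpha-\gamma})$ as $|x|\to0$ and $|w(x)|=O(|x|^{2+\alpha-\gamma_1})$ as $|x|\to\infty$.

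Next I would expand $w$ in its angular Fourier series: for each fixed $r>0$, the map $\theta\mapsto w(r,\theta)$ is smooth on $[0,\omega]$ and vanishes at $\theta=0,\omega$, so $w(r,\theta)=\sum_{j\ge1}a_j(r)\sin(j\pi\theta/\omega)$ with $a_j(r)=\tfrac{2}{\omega}\int_0^\omega w(r,\theta)\sin(j\pi\theta/\omega)\,d\theta$, the series and its termwise derivatives converging by the smoothness of $w$. Writing $\Delta w=0$ in polar coordinates, multiplying by $\sin(j\pi\theta/\omega)$ and integrating by parts twice in $\theta$ (the boundary terms vanish by the homogeneous Dirichlet condition) gives the Euler equation $a_j''+r^{-1}a_j'-(j\pi/\omega)^2 r^{-2}a_j=0$ on $(0,\infty)$, so $a_j(r)=A_j r^{j\pi/\omega}+B_j r^{-j\pi/\omega}$ for constants $A_j,B_j\in\C$. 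The pointwise bound on $w$ transfers to the coefficients, $|a_j(r)|\le C\sup_{|x|=r}|w(x)|$, so $a_j(r)=O(r^{2+\alpha-\gamma})$ as $r\to0$ and $a_j(r)=O(r^{2+\alpha-\gamma_1})$ as $r\to\infty$. Since $2+\alpha-\gamma\ge\alpha>0>-j\pi/\omega$ (this is where $\gamma\le2$ enters), the behaviour at $r=0$ forces $B_j=0$ for every $j$ and then forces $A_j=0$ unless $j\pi/\omega\ge2+\alpha-\gamma$, while the behaviour at $r=\infty$ forces $A_j=0$ unless $j\pi/\omega\le2+\alpha-\gamma_1$. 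By the hypothesis $2+\alpha-\beta\neq j\pi/\omega$ for $\beta=\gamma,\gamma_1$, the surviving indices are exactly those with $j\pi/\omega\in(2+\alpha-\gamma,\,2+\alpha-\gamma_1)$, a finite set, and collecting them yields $w=\sum_j A_j r^{j\pi/\omega}\sin(j\pi\theta/\omega)$, i.e.\ the asserted identity with $C_j=A_j$. For the Neumann problem one expands instead in $\cos(j\pi\theta/\omega)$, $j\in\N_0$; the $j=0$ coefficient solves $(ra_0')'=0$, so $a_0(r)=A_0+B_0\log r$, and both terms are excluded because $0\notin(2+\alpha-\gamma,2+\alpha-\gamma_1)$, so the sum again runs over $j\in\N$, now with $\cos$ in place of $\sin$.

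I do not expect a genuine obstacle here; the points needing care are, first, justifying the termwise differentiation of the angular series and the two integrations by parts, which follow from the smoothness of $w$ on $\overline{\K}\setminus\{O\}$ together with the homogeneous boundary condition, and second, the bookkeeping that matches the weighted-H\"older membership of $v_\gamma$ and $v_{\gamma_1}$ to the \emph{two-sided} power bounds (like $|x|^{2+\alpha-\gamma}$ near $O$ and like $|x|^{2+\alpha-\gamma_1}$ near infinity) --- it is precisely $\gamma_1<\gamma$, $\gamma\le2$ and the non-resonance condition that make these comparisons single out a finite range of exponents. An equivalent and more structural route, the one used in \cite{NP}, is to apply the Mellin transform in $r$: the transformed equation becomes $(\partial_\theta^2+\lambda^2)\widetilde v=\widetilde{r^2 f}$ on $(0,\omega)$ with the relevant boundary condition, the pencil $\partial_\theta^2+\lambda^2$ is invertible for $\lambda\notin\{\pm j\pi/\omega:j\in\N\}$ with a meromorphic inverse having simple poles there, and shifting the inversion contour from $\mathrm{Re}\,\lambda$ near $2+\alpha-\gamma$ to $\mathrm{Re}\,\lambda$ near $2+\alpha-\gamma_1$ produces $v_{\gamma_1}-v_\gamma$ as the sum of the residues at the enclosed poles, which are exactly the power solutions $C_j r^{j\pi/\omega}\sin(j\pi\theta/\omega)$.
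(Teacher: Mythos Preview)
The paper does not prove this proposition at all: it is quoted verbatim as \cite[Chapter 2, Proposition 2.12]{NP} and used as a black box, so there is no ``paper's own proof'' to compare against. Your argument is therefore an independent proof of a result the authors only cite.

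That said, your proof is correct and pleasantly elementary. The reduction to the harmonic difference $w=v_{\gamma_1}-v_\gamma$, the angular Fourier expansion, the Euler ODE for each coefficient, and the two-sided power control coming from the $\Lambda_\beta^{2,\alpha}$ norms are exactly the right ingredients; the constraint $\gamma\le 2$ enters precisely where you use it, to force $2+\alpha-\gamma>0>-j\pi/\omega$ and kill all $r^{-j\pi/\omega}$ modes, and the non-resonance hypothesis converts the closed interval of surviving exponents into the open one in the statement. Your handling of the Neumann $j=0$ mode is also correct. The only cosmetic point is that once all but finitely many $a_j$ vanish identically, the Fourier series is a finite sum, so no further convergence argument is needed to identify it with $w$.

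Compared with the route in \cite{NP} that you sketch at the end, your direct approach trades the Mellin-transform machinery (holomorphic operator pencils, contour shifting, residues) for a hands-on computation that works because the geometry is a straight sector and the operator is the Laplacian, so the angular and radial variables separate cleanly. The Mellin approach is what generalises to higher dimensions and to systems (and is indeed what underlies the analogous Proposition~\ref{P4C} for cones), but in the present $2$D setting your argument is shorter and more transparent.
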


Let $\mathbb{P}_{\kappa}$ be the set of homogeneous polynomials of degree $\kappa\in \N_0$ in $\R^n$. Below we present a special solution to the two-dimensional Laplace equation when the right hand side is a homogeneous polynomial; see \cite[Section 2.3.4]{NP}.
\begin{prop}\label{P5-2d}
Consider the inhomogeneous Dirichlet problem $\Delta_D v=p_{\kappa}\in \mathbb{P}_{\kappa}$ in $\K\in\R^2$. There exists a special solution of the form
\be\label{special-solution}\begin{split}
&v=q_{\kappa+ 2}&&\mbox{if}\quad (\kappa+ 2)\omega/\pi\notin \N,\\
&v=q_{\kappa+ 2}+C_D\,r^{\kappa+ 2}\left\{ \ln r \sin (\kappa+ 2)\theta+\theta\cos(\kappa+ 2)\theta    \right\}&&\mbox{if}\quad  (\kappa+2)\omega/\pi\in \N
\end{split}
\en for some $C_D\in \C$ and $q_{\kappa+ 2}\in \PP_{ \kappa+2}$ satisfying $\Delta q_{\kappa+2}=p_{\kappa}$.

For the Neumann problem $\Delta_N v=p_{\kappa}\in \mathbb{P}_{\kappa}$, a special solution takes the same form as (\ref{special-solution}) when $ (\kappa+2)\omega/\pi\notin \N$,  but with
\ben
v=q_{\kappa+ 2}+C_N\,r^{\kappa+ 2}\left\{ \ln r \cos ( \kappa+2)\theta-\theta\sin( \kappa+2)\theta \right\},\quad C_N\in\C,
\enn
if $ (\kappa+2)\omega/\pi\in \N$.
\end{prop}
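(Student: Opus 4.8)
The plan is to build the particular solution in two moves: first solve $\Delta q_{\kappa+2}=p_{\kappa}$ with a homogeneous polynomial, ignoring the boundary conditions, and then add a harmonic correction $w$ that cancels the remaining Dirichlet (resp.\ conormal) trace on the two sides of $\K$. For the first move, observe that $\Delta$ restricts to a linear map $\PP_{\kappa+2}\to\PP_{\kappa}$ whose kernel is the two-dimensional space of harmonic homogeneous polynomials of degree $\kappa+2$, spanned by $\operatorname{Re}(x_1+ix_2)^{\kappa+2}=r^{\kappa+2}\cos((\kappa+2)\theta)$ and $\operatorname{Im}(x_1+ix_2)^{\kappa+2}=r^{\kappa+2}\sin((\kappa+2)\theta)$. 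Since $\dim\PP_{m}=m+1$ in $\R^2$, a dimension count gives $\dim\bigl(\Delta\,\PP_{\kappa+2}\bigr)=(\kappa+3)-2=\kappa+1=\dim\PP_{\kappa}$, so the map is onto and some $q_{\kappa+2}\in\PP_{\kappa+2}$ satisfies $\Delta q_{\kappa+2}=p_{\kappa}$.

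Write the two sides of $\partial\K$ as $\{\theta=0\}$ and $\{\theta=\omega\}$. Because $q_{\kappa+2}$ is homogeneous of degree $\kappa+2$, its Dirichlet trace on each ray is a constant multiple of $r^{\kappa+2}$, and the same holds for $\partial_\theta q_{\kappa+2}$ (the conormal trace, up to sign). Hence it suffices to find a harmonic function $w$ with prescribed data $c_0 r^{\kappa+2}$ on $\theta=0$ and $c_\omega r^{\kappa+2}$ on $\theta=\omega$ (and the analogous $\partial_\theta$-data for the Neumann problem). If $(\kappa+2)\omega/\pi\notin\N$, one takes $w=r^{\kappa+2}\big(A\cos((\kappa+2)\theta)+B\sin((\kappa+2)\theta)\big)$: matching the datum on $\theta=0$ fixes one of $A,B$, and matching on $\theta=\omega$ determines the other, since the relevant $2\times2$ determinant is a nonzero multiple of $\sin((\kappa+2)\omega)\neq0$. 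As $w\in\PP_{\kappa+2}$ is harmonic, $q_{\kappa+2}+w$ is again a homogeneous polynomial of degree $\kappa+2$ with Laplacian $p_{\kappa}$; after relabeling this is precisely the first line of (\ref{special-solution}).

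The resonant case $(\kappa+2)\omega=m\pi$, $m\in\N$, is the only genuinely delicate point: here $r^{\kappa+2}\sin((\kappa+2)\theta)$ has vanishing Dirichlet trace on \emph{both} rays, so the two harmonic polynomials no longer span the trace data and a logarithmic solution is forced. I introduce
\[
\Phi_D=\operatorname{Im}\!\big((x_1+ix_2)^{\kappa+2}\log(x_1+ix_2)\big)=r^{\kappa+2}\big(\ln r\,\sin((\kappa+2)\theta)+\theta\cos((\kappa+2)\theta)\big),
\]
\[
\Phi_N=\operatorname{Re}\!\big((x_1+ix_2)^{\kappa+2}\log(x_1+ix_2)\big)=r^{\kappa+2}\big(\ln r\,\cos((\kappa+2)\theta)-\theta\sin((\kappa+2)\theta)\big),
\]
which are single-valued on $\K$ because $\theta\in(0,\omega)\subset(0,2\pi)$, and harmonic, being real and imaginary parts of a holomorphic function on the slit plane (equivalently, $\partial_\lambda$ at $\lambda=\kappa+2$ of the harmonic family $r^{\lambda}e^{\pm i\lambda\theta}$). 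Using $(\kappa+2)\omega=m\pi$ one checks that $\Phi_D$ vanishes on $\theta=0$ and equals $(-1)^m\omega\,r^{\kappa+2}$ on $\theta=\omega$, while $\partial_\theta\Phi_N$ vanishes on $\theta=0$ and equals $-(-1)^m(\kappa+2)\omega\,r^{\kappa+2}$ on $\theta=\omega$. Thus, for the Dirichlet problem, one first matches the $\theta=0$ trace with a multiple of $r^{\kappa+2}\cos((\kappa+2)\theta)$ and then cancels the residual $\theta=\omega$ trace with a multiple $C_D\Phi_D$ (which does not affect the $\theta=0$ trace); the resulting triangular system is uniquely solvable. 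Absorbing the harmonic-polynomial part into $q_{\kappa+2}$ yields the second line of (\ref{special-solution}). The Neumann problem is handled in exactly the same way, using $r^{\kappa+2}\sin((\kappa+2)\theta)$ to match the conormal datum on $\theta=0$ and $C_N\Phi_N$ to correct it on $\theta=\omega$.

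The whole argument is thus linear algebra once the homogeneous solutions are in hand; the main obstacle is to isolate the correct logarithmic harmonic functions $\Phi_D,\Phi_N$ in the resonant case and to verify that their Dirichlet (resp.\ conormal) traces vanish on precisely one of the two sides of $\K$ and are nonzero on the other, which is what makes the correction step always solvable.
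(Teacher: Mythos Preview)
Your argument is correct. The paper does not actually prove this proposition in the 2D case; it only states the result and refers to \cite[Section 2.3.4]{NP}. Your proof follows the same overall strategy as the paper's proof of the three-dimensional analogue (Proposition~\ref{Special-solution-C}): first produce some $q_{\kappa+2}\in\PP_{\kappa+2}$ with $\Delta q_{\kappa+2}=p_\kappa$, then correct the boundary trace by a harmonic function of the same homogeneity, distinguishing the non-resonant and resonant cases.

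The main difference is in execution. In 3D the paper obtains $q_{\kappa+2}$ by an explicit formula after expanding $p_\kappa$ in spherical harmonics, and in the resonant case it makes the ansatz $c\,r^{\kappa+2}\ln r\,Y_{\kappa+2}^m+r^{\kappa+2}W(\hat x)$ and solves for $W$ via the Fredholm alternative on $\Omega=\CC\cap\s^2$. Your 2D treatment is more elementary on both counts: surjectivity of $\Delta:\PP_{\kappa+2}\to\PP_\kappa$ comes from a clean dimension count, and the logarithmic corrector is produced explicitly as $\operatorname{Im}\big(z^{\kappa+2}\log z\big)$ (resp.\ $\operatorname{Re}$), so that harmonicity is automatic and the boundary traces can be read off directly. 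This bypasses any solvability argument on the cross-section and makes transparent why the triangular system in the resonant case is always solvable.
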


\subsection{Proof of Lemma \ref{lem} for polygons}\label{sec:3}

Let $\K\subset \R^2$ be an infinite sector with the angle $\omega\in(0,2\pi)\backslash\{\pi\}$. Recall that $B_1$ is the unit disk centered at the origin $O$. Assume $q\in C^{\,l,s}(\overline{\K\cap B_1})$ for some $l\in \N_0$, $s\in(0,1)$ satisfying $q\equiv1$ in $B_1\backslash\overline{\K}$.
 Consider the coupling problem between the Helmholtz equations
\be\begin{split}\label{eq:1.0}
&\Delta u_1+ k^2 u_1=0,\qquad \Delta u_2+ k^2 q u_2=0 \quad \mbox{in}\quad B_1,\\
&\partial_\nu^{j}(u_1-u_2)=0\quad \mbox{on}\quad \partial \K\cap B_1,\; j=1,2,\cdots,l+1,
\end{split}
\en
where $\partial_\nu^{j}$ denotes the normal derivative of order $j$ at $\partial \K$ and $\nu$ is the unit normal pointing into the exterior of $\K$.
The proof of Lemma \ref{lem} for a polygon with piecewise linear boundary follows straightforwardly from the lemma below, which implies that corners in 2D always scatter.
\begin{lem}\label{lem:corner-2D}
Let $u_1,u_2\in H^2(B_1)$ be solutions to (\ref{eq:1.0}), and suppose
 that $q$ satisfies the assumption (a) near the corner $O$ with $D:=\K\cap B_1$.
Then $u_1=u_2\equiv0$ in $B_1$.
\end{lem}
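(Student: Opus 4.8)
The plan is to expand both solutions of the coupling problem~\eqref{eq:1.0} near the corner $O$ in the spirit of Kondratiev/Maz'ya--Nazarov--Plamenevskii theory, and to extract a contradiction from the discrepancy between the potentials at $O$. Since $u_1$ solves the homogeneous Helmholtz equation in a neighbourhood of $O$, it is real-analytic there and admits a convergent Taylor expansion $u_1 = \sum_{m\ge 0} P_m$ into homogeneous polynomials (equivalently, a Bessel-type expansion $\sum c_j J_{j}(kr)e^{ij\theta}$), with $P_0 = u_1(O)$. Writing the equation for $u_2$ as $\Delta u_2 = -k^2 q\, u_2 = -k^2 u_2 + k^2(1-q)u_2$, and using the regularity from Proposition~\ref{P1} together with the assumption~(a) that $\nabla^l(q-1)\ne 0$ at $O$, the right-hand side has a controlled expansion whose leading behaviour near $O$ is governed by the term $k^2(1-q)u_2$. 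The key point is that the transmission conditions $\partial_\nu^j(u_1-u_2)=0$ on $\partial\K\cap B_1$ for $j=1,\dots,l+1$ force $u_1-u_2$ to vanish to high order along the two sides of the wedge.

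First I would set $w := u_1 - u_2 \in H^2(B_1)$. Then $w$ satisfies $\Delta w + k^2 w = k^2(q-1)u_2$ in $\K\cap B_1$, with $\partial_\nu^j w = 0$ on $\partial\K\cap B_1$ for $j=0,1,\dots,l+1$ (the $j=0$ case is the continuity condition; note $u_1 = u_2$ on the outside where $q\equiv 1$, so matching there gives $j=0$, and~\eqref{eq:1.0} supplies $j=1,\dots,l+1$). Next I would argue inductively on the Taylor order: suppose $u_2$ (hence $u_1$) has been shown to vanish to order $N$ at $O$ for some $N < l$; then $(q-1)u_2 = \mathcal{O}(|x|^{l+N})$ roughly, so the source term in the equation for $w$ is quite regular, and by the weighted-space solvability (Propositions~\ref{P2}--\ref{P4}) $w$ must itself be correspondingly regular near $O$ — but then, by cancelling the Helmholtz operator against polynomials (Proposition~\ref{P5-2d}, to construct explicit polynomial or $r^{j\pi/\omega}$-type correctors) and using that $w$ vanishes to high order on both sides of the wedge, one concludes the degree-$(N+1)$ part of $w$, and ultimately of $u_2$, must vanish. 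Iterating, $u_2$ vanishes to all orders at $O$; since $u_2$ satisfies an equation to which unique continuation applies (bounded potential), $u_2\equiv 0$ in $B_1$, whence $u_1 = w \equiv 0$ as well by the same argument (or directly, as $u_1$ is real-analytic and vanishes to infinite order at $O$).

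The real engine of the argument is the base case and the mechanism that turns the wedge-angle condition $\omega\in(0,2\pi)\setminus\{\pi\}$ into a rigidity statement. The point is: a harmonic (or polyharmonic-corrected) function on the sector $\K$ that vanishes together with enough normal derivatives on \emph{both} rays $\theta = 0$ and $\theta = \omega$ is heavily constrained, because the relevant singular/regular exponents are $j\pi/\omega$, and the compatibility of a nonzero polynomial source $p_\kappa$ with homogeneous Dirichlet/Neumann data on both sides fails unless $(\kappa+2)\omega/\pi$ hits an integer — the $\omega\ne\pi$ hypothesis (and, in the cone case, $\omega\ne\pi/2$) is precisely what obstructs the "resonant" cancellation. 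Concretely, at leading order the discontinuity of $q$ (the $l=0$ case: $q(O)=q_0\ne 1$) injects a nonzero constant $\propto k^2(1-q_0)u_2(O)$ into $\Delta w$; the unique $V^2_\beta$- or $\Lambda^{2,\alpha}_\beta$-solution has a prescribed quadratic leading term plus possibly a $\log$/angular singular term, and matching this against the requirement that $w$ and its first normal derivative vanish on both rays forces $u_2(O)=0$ — unless the exceptional angle relation holds, which it does not. This is the step I expect to be the main obstacle: carefully bookkeeping the weighted-space indices so that the correct singular exponents are isolated, handling the logarithmic resonant terms from Proposition~\ref{P5-2d} when $(\kappa+2)\omega/\pi\in\N$, and ensuring the induction closes for general $l\ge 1$ where one must track $l+1$ normal derivatives and the $\mathcal{O}(|x|^l)$ vanishing of $q-1$ simultaneously.
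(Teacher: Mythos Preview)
Your overall architecture is right: reduce to a Cauchy problem for the Laplacian on the sector, use the weighted-space solvability in $\Lambda^{2,\alpha}_\beta(\K)$ to peel off the asymptotics near $O$, and run an induction on the Taylor order. But two points are off, and one of them is the heart of the argument.

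First, the mechanism you describe for why the polynomial source must vanish is not the one that actually works. You write that ``the compatibility of a nonzero polynomial source $p_\kappa$ with homogeneous Dirichlet/Neumann data on both sides fails unless $(\kappa+2)\omega/\pi$ hits an integer,'' and that $\omega\neq\pi$ is what blocks the resonant cancellation. This is incorrect: the resonance $(\kappa+2)\omega/\pi\in\N$ can and does occur for many admissible angles (e.g.\ $\omega=\pi/2$, $\kappa=0$), and the paper's proof explicitly treats that case --- the logarithmic terms from Proposition~\ref{P5-2d} are carried along and then shown to cancel by comparing the Dirichlet and Neumann representations of the \emph{same} function $v$. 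What remains after killing the $r^{j\pi/\omega}$-terms and the $\ln r$-terms is a homogeneous polynomial $q_{m+2}\in\PP_{m+2}$ satisfying $\Delta^2 q_{m+2}=0$ in $\K$ and $q_{m+2}=\partial_\nu q_{m+2}=0$ on $\partial\K$. The angle condition $\omega\neq\pi$ enters \emph{only} through the separate rigidity lemma (Proposition~\ref{P6}): a biharmonic function with vanishing Cauchy data on the two rays of a non-degenerate sector is identically zero. Your sketch never invokes this lemma, and without it the induction step does not close.

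Second, for $l\ge 1$ you propose to work directly with $w=u_1-u_2$ and the $l{+}2$ boundary conditions $\partial_\nu^j w=0$, $j=0,\dots,l{+}1$. The paper instead sets $\tilde v:=\nabla^l(u_1-u_2)$ and observes that the higher-order transmission conditions collapse to the standard Cauchy data $\tilde v=\partial_\nu\tilde v=0$ on $\partial\K$, while the source becomes $-k^2\nabla^l(qu)+k^2\nabla^l(h u_1)$ with $h=1-q$; since $\nabla^j h(O)=0$ for $j<l$ and $\nabla^l h(O)\neq 0$, the leading term of the source is the constant vector $k^2\nabla^l h(O)\,u_1(O)$. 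This reduction is what makes the weighted-space bookkeeping uniform in $l$ and feeds directly into the biharmonic step. Finally, note that the paper writes the equation as $\Delta u+k^2 q u=k^2(1-q)u_1$ so that the source involves the \emph{analytic} function $u_1$; one then proves $\nabla^m u_1(O)=0$ for all $m$ and concludes $u_1\equiv 0$ by analyticity, after which $u_2\equiv 0$ follows by unique continuation. Your version with $(q-1)u_2$ on the right can be made to work, but the analyticity route is cleaner and avoids the awkward ``$u_2$ (hence $u_1$)'' coupling in your induction.
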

 Lemma \ref{lem:corner-2D} will be proved by applying the solvability results of the Laplace equation in the weighted spaces introduced in Section \ref{Regularity}.
  For simplicity we write $\Lambda_{\beta}^{\,\kappa,\alpha}=\Lambda_{\beta}^{\,\kappa,\alpha}(\K)$ and $V_\beta^\kappa=V_\beta^\kappa(\K)$ to drop the dependence on the sector $\K$ in this subsection.

\begin{proof} Obviously, $u_1$ is real-analytic in $B_1$ and
by Proposition \ref{P1},
$$u_2\in C^{\,l+1,\alpha}(\overline{B}_1)\cap H^{\,l+2}(B_1)\quad\mbox{for all}\quad\alpha\in[0,1).$$ Hence, the traces of $u_1$ and $u_2$ on $\partial \K\cap B_1$ occurring in (\ref{eq:1.0}) are all well defined.
For clarity we shall divide the proof into five steps.

\textbf{Step 1.} Setting $u:=u_1-u_2$, we have
\ben
&&\Delta u+k^2qu=k^2(1-q)u_1 \quad\mbox{in}\quad \K\cap B_1,\\
&&\partial_\nu^{j} u=0 \qquad\mbox{on}\quad \partial \K\cap B_1,\;j=1,2,\cdots, l+1.
\enn
Let $\tilde{v}=\nabla^l u$. Then $\tilde{v}\in C^{1,\alpha}(\overline{\K\cap B_1})\cap H^{2}(\K\cap B_1)$ solves the following Cauchy problem for the Laplace equation with an inhomogeneous right hand side
\ben
\Delta\tilde{v}=-k^2\nabla^l(qu)+k^2\nabla^l(hu_1)\quad\mbox{in}\quad \K\cap B_1,
\quad \tilde{v}=\partial_\nu \tilde{v}=0 \quad\mbox{on}\quad \partial \K\cap B_1,
\enn where $h:=1-q$. Here and in the following a scalar differential operator is assumed to act componentwise on a vector function.

 We shall analyze the singularity of $\tilde{v}$ near the corner $O$.
Since the solvability results in Propositions \ref{P2}-\ref{P4} refer to the case of an infinite cone, we will introduce a new boundary value problem defined over $\K$. For this purpose, we
choose a cut-off function $\chi\in C_0^\infty(\overline{\K})$ such that
$\chi\equiv 1$ in $\K\cap B_{1/2}$ and $\chi\equiv 0$ in $ \K\cap B^e_{1}$. Define a new function $v$ as
\ben
v:=\left\{\begin{array}{lll}
\chi\, \tilde{v} && \mbox{in}\quad \K\cap \overline{B}_1,\\
0&&\mbox{in}\quad \K\cap B_1^e.
\end{array}\right.
\enn
Introduce the commutator in $\K\cap B_1$:
$$[\Delta, \chi]\tilde{v}:=\Delta(\chi\tilde{v})-\chi\Delta\tilde{v}=\tilde{v}\Delta\chi+2\nabla \tilde{v}\cdot\nabla\chi$$
and extend $[\Delta, \chi]\tilde{v}$, $q, h, u$ and $u_1$ by zero to $\K\cap B_1^e$.  Simple calculations show that
\be\label{eq:2}\begin{split}
&\Delta v=-k^2\chi\,\nabla^l(qu)+k^2\chi\,\nabla^l(hu_1)-[\Delta,\chi]\tilde{v}=:f\quad\mbox{in}\quad \K,\\
&v=\partial_\nu v=0 \quad\mbox{on}\quad \partial \K.
\end{split}
\en
We shall study the boundary value problem (\ref{eq:2}) in the weighted H\"older spaces $\Lambda_\beta^{2,\alpha}(\K)$ ($\beta\leq 1$) introduced in Section \ref{Regularity} where the weight $\beta$ will be improved step by step. 
The inhomogeneous term $f$ in (\ref{eq:2}) belongs to $C^{\,0,\alpha}(\overline{\K})$ and thus to $\Lambda_{1}^{\,0,\alpha}$ for all $\alpha\leq s$, while $v\in C^{1,\alpha}(\overline{\K})\cap H^{2}(\K)$.
Recall that $s$ is the H\"older exponent of $q$.

\textbf{Step 2.} We show that
 $v\in \Lambda_{1,D}^{2,\alpha}\cap \Lambda_{1,N}^{2,\alpha}$ if the H\"older exponent $0<\alpha<s$ is sufficiently small.

 First it holds that $v\in V_0^2$, since $v$ has compact support, $v\in H^2(\K)$ and by the vanishing Cauchy data,
 \ben
 r^{-2} |v|+ r^{-1} |\nabla v|=\mathcal{O}(r^{\alpha-1})\quad\mbox{as}\quad r\rightarrow 0.
 \enn
 Hence, by Proposition \ref{P2} with $\beta=0$, $v$ is the unique solution of (\ref{eq:2}) in the weighted Sobolev space $V_{0,D}^{2}\cap V_{0,N}^{2}$; note that $1\neq j\pi/\omega$ for all $j\in \N_0$ since the opening angle $\omega\in(0,2\pi)\backslash\{\pi\}$.
 On the other hand, since $f\in \Lambda_{\beta}^{0,\alpha}$ for all $\beta\geq1$,
 by Propositions \ref{P3} and \ref{P4} there are unique solutions $v_{D/N}$
 of the first equation in (\ref{eq:2})
 satisfying $v_D\in \Lambda_{\beta,D}^{2,\alpha}$ and
$v_{N}\in \Lambda_{\beta,N}^{2,\alpha}$
for all $\beta\geq 1$ sufficiently close to $1$ and $\alpha>0$ sufficiently small.
Note that, for those $\alpha$ and $\beta$, $2+\alpha-\beta\neq j\pi/\omega$ for all $j\in \N$.
Moreover,  $v_{D/N}\in\Lambda_{1}^{2,\alpha}$ implies that $\chi v_{D/N}\in V_0^2$.
Since also
$v_{D/N}\in\Lambda_{\beta}^{2,\alpha}$ for some $\beta>1$, it is easy to check that $(1-\chi)v_{D/N}\in V_0^2$.
Therefore, we obtain $v_{D/N}\in V_0^2$,  implying that $v=v_D=v_N$ and the required regularity of $v$ in this step.
%

\textbf{Step 3.} We show that  $ f\in \Lambda_{0}^{\,0,\alpha},  v\in \Lambda_{0}^{2,\alpha}$ for $\alpha>0$ sufficiently small, and $u_1(O)=0$.

From the regularity assumption on $q$ it follows that
 \be\label{eq:27} \nabla^{j}h(O)= 0, \;j=0,1,\cdots,l-1,\quad \nabla^{l}h(O)\neq 0.\en
 The last relation means that $\partial_{x_1}^{l_1}\partial_{x_2}^{l_1}h(O)\neq 0$ for some $l_1,l_2\in\N_0$ such that $l_1+l_2=l$.
Using (\ref{eq:27}) and the fact that $v\in \Lambda_{1}^{2,\alpha}$ we get
\ben
\chi\nabla^{l}(qu)\in \Lambda_{1}^{2,\alpha}\subset \Lambda_{0}^{0,\alpha},\quad k^2\chi\,[\nabla^{l}(h\,u_1)- \nabla^{l}h(O)\;u_1(O) ]\,\in \Lambda_{0}^{0,\alpha}.
\enn
 Hence, the right hand side of (\ref{eq:2}) takes the form
\be\label{eq:5}
f=\chi p_0+f_0,\qquad\quad  p_0:=k^2\,\nabla^{l}h(O) \,u_1(O),\quad f_0:=f-\chi p_0\in \Lambda_{0}^{0,\alpha},
\en
 that is, $\chi p_0$ is the only part of $f\in \Lambda_{1}^{0,\alpha}$ that does not belong to $\Lambda_0^{0,\alpha}$. Therefore, it suffices to verify the vanishing of the constant vector $p_0$ in this step.

Consider the boundary value problems
 \be\label{w}
 \Delta_D v_0=p_0,\quad \Delta_N v_0=p_0\quad\mbox{on}\quad\overline{\K}.
 \en Applying Proposition \ref{P5-2d} with $\kappa=0$ yields special solutions $v_{0,D}$, $v_{0,N}$ to (\ref{w}) of the form
 \be\label{eq:3}\begin{split}
 v_{0,D}&=&q_{ 2,D}+c_D\,r^{ 2}\left\{ \ln r \sin 2\theta+\theta\cos2\theta \right\},\\
  v_{0,N}&=&q_{ 2,N}+c_N\,r^{ 2}\left\{ \ln r \cos 2\theta-\theta\sin2\theta \right\},
  \end{split}
 \en where $q_{ 2,D/N}\in \PP_{ 2}$, $c_{D/N}\in\C$ satisfy
 \ben
 \Delta q_{ 2,D/N}=p_0,\qquad
 c_{D/N}=0\quad\mbox{if}\quad 2\omega/\pi\notin \N.
 \enn
 For the (unique) solution $v\in \Lambda_{1,D}^{2,\alpha}\cap \Lambda_{1,N}^{2,\alpha}$
 of the problem (\ref{eq:2}), we
 set $$w_{0,D/N}:=v-\chi\, v_{0,D/N}\in \Lambda_1^{ 2,\alpha}.$$
 Using (\ref{eq:5}),
 one can readily check that
 \ben
 \Delta w_{0,D}=f_0-[\Delta,\chi]\,v_{0,D}=:g_{0,D}\in \Lambda_0^{0,\alpha}\cap \Lambda_1^{0,\alpha},\\
 \Delta w_{0,N}=f_0-[\Delta,\chi]\,v_{0,N}=:g_{0,N}\in \Lambda_0^{0,\alpha}\cap \Lambda_1^{0,\alpha}.
 \enn
We apply Proposition \ref{P4} with $\gamma_1=0$ and $\gamma=1$ to the previous two boundary value problems to get the unique solutions in $\Lambda_1^{ 2,\alpha}$  of the form
\be\label{eq:4}\begin{split}
 w_{0,D}= \chi\,\sum_j d_{D,j}\, r^{ j\pi/\omega} \,\sin[(j\pi/\omega)\theta]  +\tilde{w}_{D},\qquad d_{D,j}\in\C,\quad \tilde{w}_{D}\in \Lambda_{0,D}^{ 2,\alpha},\\
  w_{0,N}=\chi\, \sum_j d_{N,j}\, r^{ j\pi/\omega} \,\cos [(j\pi/\omega)\theta] +\tilde{w}_{N},
  \qquad d_{N,j}\in\C,\quad \tilde{w}_{N}\in \Lambda_{0,N}^{ 2,\alpha},
  \end{split}
\en
where the sums in (\ref{eq:4}) are both taken over all $j\in \N$ such that
 $j\pi/\omega\in ( 1+\alpha,  2+\alpha)$, or equivalently, $j\pi/\omega\in ( 1+\alpha,  2]$.
Comparing (\ref{eq:3}), (\ref{eq:4}) and recalling that $v$ solves both the Dirichlet and Neumann boundary value problems, we obtain the following expressions as $r\rightarrow 0$:
\be\label{eq:21}\begin{split}
v&=\sum_j d_{D,j}\, r^{ j\pi/\omega} \,\sin[(j\pi/\omega)\theta]+q_{ 2,D}+c_D\,r^{ 2}\left\{ \ln r \sin 2\theta+\theta\cos2\theta \right\}+\mathcal{O}(r^{2+\alpha})\\
&= \sum_j d_{N,j}\, r^{ j\pi/\omega} \,\cos [(j\pi/\omega)\theta]
  +q_{ 2,N}+c_N\,r^{ 2}\left\{ \ln r \cos 2\theta-\theta\sin2\theta \right\}+\mathcal{O}(r^{2+\alpha}).
\end{split}
\en
Note that both $\tilde{w}_{D}$ and $\tilde{w}_{N}$ are subject to the decay of order $\mathcal{O}(r^{2+\alpha})$ near the corner.
 Letting $r\rightarrow0$ and using the linear independence of the $\sin$ and $\cos$ functions, we get the relations (see Section \ref{sec:3-c} for the proof in the more complicated case of circular cones)
  $$c_D=c_N=0,\qquad\quad  d_{D,j}=d_{N,j}=0\quad\mbox{if}\quad j\pi/\omega< 2.$$
  Hence, the lowest order term of $v$ near $O$ takes the form
\ben
d_{D}\, r^{ 2} \,\sin2\theta+q_{ 2,D}=d_{N}\, r^{ 2} \,\cos2\theta+q_{ 2,N}=:q_{ 2}\in \PP_{ 2},
\enn where $d_D=d_N=0$ if $\omega\neq \pi/2, 3\pi/2$.
Moreover, the polynomial $q_{ 2}$ must satisfy $q_{ 2}=\partial_{\nu}q_{ 2}=0$ on $\partial \K$ and the equations
\ben
\Delta q_{ 2}=\Delta q_{ 2,D}=\Delta q_{ 2,N}=p_0\in\PP_0,\qquad \Delta^2 q_2=0\qquad\mbox{in}\quad \K.
\enn Making use of Proposition \ref{P5} in the Appendix, we then get $q_{ 2}\equiv0$, so that $p_0=0$. This implies that $v\in\Lambda_0^{ 2,\alpha}$. Finally, the relation $u_1(O)=0$ follows from (\ref{eq:27}) and the definition of $p_0$ in
  (\ref{eq:5}).

\textbf{Step 4.} For any $m\in \N$, we show via induction that, for $\alpha>0$ sufficiently small,
\be\label{eq:6}
 f\in \Lambda_{1-m}^{\,0,\alpha},\quad  v\in \Lambda_{1-m}^{\, 2,\alpha}, \quad \nabla^j u_1(O)=0\quad\mbox{for all}\quad j\in \N_0,\; j\leq m-1.
\en Note that the case $m=1$ has been covered by Step 2, and the last equality in (\ref{eq:6}) means that $\partial_{x_1}^{j_1}\partial_{x_2}^{j_2} u_1(O)=0$ for all $j_1,j_2\in \N_0$ such that $j_1+j_2=j$.
 Assuming the induction hypothesis that the relations in
(\ref{eq:6}) hold for some $m>1$, we have to show that
\be\label{eq:7}
 f\in \Lambda_{-m}^{\,0,\alpha},\quad  v\in \Lambda_{-m}^{\, 2,\alpha}, \quad \nabla^m u_1(O)=0.
\en
Denote by $u_{1,m}\in \PP_m$ the homogeneous Taylor polynomial of degree $m$ of $u_1$ at $O$.
By the last relation in (\ref{eq:6}), we have  $u_{1,j}\equiv 0$ for all $j\leq m-1$.

From the induction hypothesis and the assumption on $q$ it follows that
\ben
\chi\,\nabla^l(qu)\in \Lambda_{1-m}^{\, 2,\alpha}\subset \Lambda_{-m}^{\,0,\alpha},\qquad k^2\chi\, \nabla^l(h\,u_1)\,\in \Lambda_{1-m}^{\,0,\alpha}.
\enn This implies that the right hand side can be split into
\ben
f=\chi p_m+f_m\in \Lambda_{1-m}^{\,0,\alpha}
\enn with
\ben
 p_m:=k^2\, \nabla^lh(O)\,u_{1,m},\quad \chi p_m\in \Lambda_{1-m}^{\,0,\alpha},\quad f_m:=f-\chi p_m\in \Lambda_{-m}^{\,0,\alpha}.
\enn
By Proposition \ref{P5} in the Appendix we see that $\Delta p_m=k^2\nabla^lh(O)\Delta u_{1,m}=0 $.

Repeating the arguments in Step 2 and applying Proposition \ref{P5-2d} with $\kappa=m$, we find that $v\in \Lambda^{2,\alpha}_{1-m,D}\cap \Lambda^{2,\alpha}_{1-m,N}$ takes the form
\be\label{eq:23}\begin{split}
v=&\,\chi\left\{q_{m+ 2,D}+c_D\,r^{m+ 2}\left\{ \ln r \sin (m+2)\theta+\theta\cos(m+2)\theta \right\}\right\}\\
&+\,\chi\,\sum_j d_{D,j}\, r^{ j\pi/\omega} \,\sin[(j\pi/\omega)\theta]+\tilde{w}_D\\
= &\,\chi\left\{q_{m+ 2,N}+c_N\,r^{m+ 2}\left\{ \ln r \cos (m+2)\theta-\theta\sin(m+2)\theta \right\}\right\}\\
&+\,\chi\,\sum_j d_{N,j}\, r^{ j\pi/\omega} \,\cos[(j\pi/\omega)\theta]+\tilde{w}_N,
\end{split}
\en
for some
 $\tilde{w}_{D/N}\in \Lambda^{2,\alpha}_{-m,D/N}$, $c_{D/N}\in\C$, $d_{D/N,j}\in \C$ and  $q_{m+ 2,D/N}\in \PP_{m+2}$ satisfying
 $\Delta q_{m+ 2,D/N}=p_m$. The two sums in (\ref{eq:23}) are taken over $j\in \N$ such that
 $$j\pi/\omega\in(1+\alpha+m, 2+\alpha+m),\quad\mbox{or equivalently,}\quad j\pi/\omega\in(1+\alpha+m, 2+m].$$
It is easy to observe that
$\tilde{w}_{D/N}=\mathcal{O}(r^{2+m+\alpha})$ as $r\rightarrow0$.
Hence, it follows from (\ref{eq:23}) by letting $r\rightarrow0$ that
\ben
c_D=c_N=0,\quad d_{D,j}=d_{N,j}=0\quad\mbox{if}\quad j\pi/\omega< m+2\, ;
\enn see again the proof of Lemma \ref{lem-cone} for the details.
Therefore, the lowest order term $q_{m+2}$ of $v$ near $O$ belongs to $\PP_{ m+2}$ and satisfies
\ben
&\Delta q_{ m+2}=\Delta q_{m+2,D/N} =p_{m}\in\PP_{ m},\quad \Delta^2q_{m+2}=\Delta p_m=0\quad\mbox{in}\quad \K\, \\
&q_{ m+2}=\partial_{\nu}q_{ m+2}=0\quad\mbox{on}\quad \partial \K.
\enn
Using Proposition \ref{P6} in the Appendix we arrive at $q_{ m+2}\equiv0$. Consequently, it follows that  $p_{m}\equiv0$ and $u_{1,m}\equiv 0$ which implies the relations in (\ref{eq:6}).

\textbf{Step 5.} We have proved that $\nabla^j u_1(O)=0$ for all $j\in \N_0$ in the previous step. Hence, $u_1\equiv0$ in $B_1$ due to the analyticity. Finally, the vanishing of $u_2$ follows from the unique continuation for elliptic equations; see e.g.  \cite[Chapters 3.2 and 3.3]{Isakovbook} for a proof based on Carleman estimates. This finishes the proof of Lemma \ref{lem:corner-2D}.
\end{proof}
\section{Edges in 3D always scatter}\label{3D-edges}
This section is devoted the proof of Lemma \ref{lem} for a polyhedron with flat surfaces.
Consider an infinite wedge domain $\W=\K\times\R$ in $\R^3$, where the notation $\K$ still stands for a sector with the opening angle $\omega\in (0,2\pi)\backslash\{\pi\}$. For simplicity we write $x'=(x_1,x_2)$ so that $x=(x',x_3)\in \R^3$. Analogously, the origin $O\in \R^3$ can be written as $O=(O',0)$ where $O'=(0,0)\in \R^2$.
Let $U_{a}=\{x\in \R^3: x_1^2+x_2^2<1, |x_3|<a\}$ be a cylinder of height $2a$ for some $a>0$. Then  $O\in \partial \W\cap U_1$ is an interior edge point. Let $\Delta=\Delta_x$ and $\Delta_{x'}$ be the three and two dimensional Laplace operators with respect to the variables $x$ and $x'$, respectively.  Suppose that $q\in C^{\,l,s}(\overline{\W\cap U_1})$ for some $s\in(0,1)$ and $l\in \N_0$ and that $q\equiv 1$ in $\W^e\cap U_1$.
As the counterpart of (\ref{eq:1.0}) in 3D, we consider the problem
\be\begin{split}\label{eq:11}
&\Delta u_1+ k^2 u_1=0,\qquad \Delta u_2+ k^2 q u_2=0 \quad \mbox{in}\quad U_{1},\\
&\partial_\nu^{j}(u_1-u_2)=0\quad \mbox{on}\quad \partial \W\cap U_{1},\; j=1,2,\cdots,l+1.
\end{split}
\en
The analogue of Lemma \ref{lem:corner-2D} in a wedge domain is formulated as follows.
\begin{lem}\label{lem:edge-3D} Assume that $q$ satisfies the assumption (a) with $D:=\W\cap U_{1}$ near the edge point $O$.
Let $u_1,u_2\in H^2(U_{1})$ be a solution pair to (\ref{eq:11}). Then $u_1=u_2\equiv0$ in $U_{1}$.
\end{lem}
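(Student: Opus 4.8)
The plan is to reduce Lemma \ref{lem:edge-3D} to the two-dimensional Lemma \ref{lem:corner-2D} by applying the partial Fourier transform in the edge variable $x_3$. First I would cut off the solutions near $O$: choose $\psi\in C_0^\infty(-1,1)$ with $\psi\equiv 1$ on $(-1/2,1/2)$, set $w_j := \psi(x_3)\,u_j$, so that $w_j$ has compact support in $x_3$ and still satisfies, near the edge,
\be
\Delta w_1 + k^2 w_1 = g_1,\qquad \Delta w_2 + k^2 q\,w_2 = g_2,\no
\en
where $g_j = [\Delta,\psi]u_j = u_j\,\psi'' + 2\,\pa_{x_3}u_j\,\psi'$ is supported in $1/2\le|x_3|<1$ and hence vanishes in a neighbourhood of the edge plane $x_3=0$. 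Crucially, the transmission conditions $\pa_\nu^{\,j}(w_1-w_2)=0$ on $\partial\W\cap U_1$ for $j=0,1,\dots,l+1$ are preserved, since $\psi$ depends only on $x_3$ and $\nu$ lies in the $x'$-plane. Applying the one-dimensional Fourier transform $x_3\mapsto\xi$ to $w_j$, writing $\hat w_j(x',\xi)$, turns the 3D Helmholtz operators into the 2D Helmholtz-type operators $\Delta_{x'}\hat w_1 + (k^2-\xi^2)\hat w_1 = \hat g_1$ and $\Delta_{x'}\hat w_2 + (k^2-\xi^2)\, q\, \hat w_2$ plus a lower-order coupling term coming from $q$ being $x_3$-dependent — so some care is needed to keep the structure $\Delta_{x'}\hat u_1+\tilde k^2\hat u_1=0$ with $\tilde k^2=k^2-\xi^2$ possibly complex, which is harmless for the weighted-space machinery of Section \ref{Regularity} since only the principal part $\Delta_{x'}$ and the homogeneous right-hand side matter there.

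The key steps, in order, are: (1) reduce to compactly supported data via the cutoff $\psi$ and record that the commutator terms vanish near $x_3=0$; (2) take the partial Fourier transform and obtain, for a.e.\ $\xi\in\R$, a 2D coupling problem on the sector $\K$ for the pair $(\hat w_1(\cdot,\xi),\hat w_2(\cdot,\xi))$ with the same transmission conditions on $\partial\K$ and with right-hand sides that are supported away from the edge plus terms involving $\xi$-dependent but $\xi$-analytic coefficients; (3) observe that, near the corner $O'\in\R^2$, the forcing terms inherited from the cutoff are smooth (indeed the data is, near $O'$, exactly of the form treated in Lemma \ref{lem:corner-2D} with wavenumber replaced by $\sqrt{k^2-\xi^2}$ and potential $q(\cdot,\xi)$ still satisfying assumption (a) uniformly in $\xi$ on a small ball), and conclude $\hat w_1(x',\xi)=\hat w_2(x',\xi)=0$ for $x'$ in a neighbourhood of $O'$, for each fixed $\xi$; (4) invert the Fourier transform to deduce $w_1=w_2\equiv 0$, hence $u_1=u_2\equiv 0$ in a small cylinder $U_\delta$, and finally extend to all of $U_1$ by unique continuation for the elliptic equations $\Delta u_1+k^2u_1=0$ and $\Delta u_2+k^2qu_2=0$ (as in Step 5 of the proof of Lemma \ref{lem:corner-2D}).

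A subtle point worth flagging is that the Fourier transform is only meaningful for the $x_3$-variable after the cutoff, and one must justify that the 2D weighted-space arguments can be run with a parameter-dependent wavenumber $\tilde k(\xi)=\sqrt{k^2-\xi^2}$ (real for $|\xi|<k$, purely imaginary for $|\xi|>k$) — but the entire argument of Lemma \ref{lem:corner-2D} only used that $u_1$ solves $\Delta u_1 + \tilde k^2 u_1 = 0$ (so $u_1$ is real-analytic and $\Delta u_1 = -\tilde k^2 u_1$), that $\Delta^2$ annihilates homogeneous polynomials generated by the Taylor expansion of $u_1$, and that $\nabla^l(q-1)(O)\ne 0$; none of these require $\tilde k$ to be real or positive. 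Also, assumption (a) on $q$ in 3D gives $q\in C^{l,s}(\overline{\W\cap U_1})\cap W^{l,\infty}(U_1)$ with $\nabla^l(q-1)(O)\ne 0$; after Fourier transforming in $x_3$ one works instead with $q(x',x_3)$ and its $x_3$-derivatives directly in the spatial formulation (rather than literally transforming the equation for $w_2$), i.e.\ it is cleanest to \emph{not} transform but to differentiate the 3D equation $l+1$ times in $x_3$ and argue as in the 2D case treating $x_3$ as a parameter — that is the approach I expect the authors to take, and the main obstacle is bookkeeping the extra lower-order terms that $\pa_{x_3}$-differentiation produces while verifying they land in the correct weighted H\"older space $\Lambda^{0,\alpha}_{\beta}(\K)$ at each induction step.
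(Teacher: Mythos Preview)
Your overall strategy---reduce to a two-dimensional problem on the sector $\K$ via the partial Fourier transform in the edge variable $x_3$---is exactly the paper's idea. However, the implementation you sketch has a genuine gap, and your guess at the end about what the authors do is not right.

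The obstacle in your step (2)--(3) is the one you yourself flag: because $q$ depends on $x_3$, Fourier transforming the equation $\Delta w_2+k^2 q\,w_2=g_2$ does \emph{not} yield a pointwise-in-$\xi$ equation of the form $\Delta_{x'}\hat w_2+(k^2-\xi^2)\,\tilde q(x',\xi)\,\hat w_2=\hat g_2$; the product $q w_2$ becomes a convolution in $\xi$. So there is no 2D coupling problem of the form (\ref{eq:1.0}) to which Lemma \ref{lem:corner-2D} can be applied as a black box. Your proposed alternative---``not transform but differentiate in $x_3$ and treat $x_3$ as a parameter''---is also not what the paper does and would require substantial new bookkeeping (the slices $u_j(\cdot,x_3)$ do not satisfy 2D Helmholtz equations).

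The paper's fix is to \emph{first} pass to the inhomogeneous Laplace equation for the cut-off $l$-th derivatives of the difference, exactly as in Step~1 of the 2D proof: set $v:=\chi(x')\varphi(x_3)\,\nabla_x^l(u_1-u_2)$, so that $\Delta v=f_0$ in $\W$ with $v=\partial_\nu v=0$ on $\partial\W$. Since $\Delta$ has constant coefficients, the partial Fourier transform now gives a clean parametrized family
\[
\Delta_{x'} w(x',\xi)=\F f_0(x',\xi)+\xi^2 w(x',\xi)=:f(x',\xi)\quad\mbox{in }\K,\qquad w=\partial_\nu w=0\ \mbox{on }\partial\K,
\]
with $f(\cdot,\xi)\in\Lambda_1^{0,\alpha}(\K)$ for every $\xi$. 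One then \emph{re-runs} the weighted-H\"older-space induction of Lemma \ref{lem:corner-2D} with $\xi$ as a parameter (rather than invoking the lemma itself). The leading part of $f(\cdot,\xi)$ at each step is $\F\big(\varphi\,\nabla_x^l h(O',x_3)\,u_{1,m}(x',x_3)\big)$, where $u_{1,m}(\cdot,x_3)$ is the degree-$m$ Taylor polynomial of $u_1$ in $x'$; one shows it vanishes for all $\xi$, inverts the transform, and uses $\nabla_x^l h(O',x_3)\neq 0$ near $x_3=0$ together with analyticity in $x_3$ to conclude $u_{1,m}\equiv 0$. This is precisely the missing idea: transform the Laplace equation for the difference, not the two Helmholtz equations separately.
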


Based on Lemma \ref{lem:edge-3D} one can prove that an edge with an arbitrary opening angle $\omega\in(0,2\pi)\backslash\{\pi\}$ scatters every incident wave non-trivially (see Section \ref{proof}). Below we extend the arguments for proving Lemma \ref{lem:corner-2D} to a wedge domain by using partial Fourier transform. Lemma \ref{lem} in the case of a polyhedron with flat surfaces is a direct consequence of Lemma \ref{lem:edge-3D}.

\begin{proof}
By Proposition \ref{P1},  $u:=u_2-u_1\in C^{\,l,\alpha}(\overline{U}_{1})\cap H^{l+2}(U_{1})$ for all $\alpha\in[0,1)$. To prove the lemma, we set $h:=1-q$ and $v(x):=\chi(x')\varphi(x_3) \nabla_x^l u$ where $\chi\in C_0^\infty(\overline{\K})$ is the cut-off function introduced in the proof of Lemma \ref{lem:corner-2D} and $\varphi\in C_0^\infty(-1,1)$ satisfies $\varphi\equiv1$ in $(-1/2,1/2)$. Then $v\in C^{\,1,\alpha}(\overline{\W})\cap H^{2}(\W)$ is a solution to the inhomogeneous Laplace equation (cf. (\ref{eq:2}))
\be\label{eq:12}\begin{split}
&\Delta v=-k^2\chi\varphi\,\nabla_x^l(qu)+k^2\chi\varphi\,\nabla_x^l(hu_1)-[\Delta,\chi\varphi](\nabla_x^l u)=:f_0\quad\mbox{in}\quad \W,\\
&v=\partial_\nu v=0 \quad\mbox{on}\quad \partial \W.
\end{split}
\en
Introduce the partial Fourier transform
\ben
\F_{x_3\rightarrow\xi}(v(x',x_3))=\F v(x',\xi):=\frac{1}{\sqrt{2\pi}}\int_{\R} v(x',x_3)\,e^{i x_3\xi}\;d x_3,\quad \xi\in\R
\enn
and set
\ben
w_0:=\varphi\nabla_x^l u,\qquad w(x',\xi):=\chi(x')\F w_0(x',\xi)=\F v(x',\xi).
\enn
Applying the  partial Fourier transform
to (\ref{eq:12}),
we obtain a Cauchy problem for the two-dimensional Laplace equation in the infinite sector $\K$ depending on the parameter $\xi\in \R$:
\be\label{eq:13}\begin{split}
&\Delta_{x'} w(x',\xi)=\F f_0(x',\xi)+\xi^2 w(x',\xi)=:f(x',\xi)\quad&&\mbox{in}\quad \K, \\
&w(\cdot,\xi)=\partial_{\nu}w(\cdot,\xi)=0\quad&&\mbox{on}\quad \partial \K.
\end{split}\en
Note that the right hand side $f$ is analytic in $\xi$ for any fixed $x'\in\R^2$. Moreover, for all $\xi\in \R$, we have for $\alpha\leq s$ that
\ben
w(\cdot,\xi)\in C^{1,\alpha}(\overline{\K})\cap H^2(\K),\quad f(\cdot,\xi)\in C^{0,\alpha}(\overline{\K})\subset \Lambda^{0,\alpha}_1(\K)
\enn and
\ben
f(\cdot,\xi)-f(O',\xi)\in \Lambda^{0,\alpha}_0(\K).
\enn
Applying Step 2 in the proof of Lemma \ref{lem:corner-2D} to  (\ref{eq:13}) yields $w(\cdot,\xi)\in \Lambda^{2,\alpha}_{1,D}(\K)\cap \Lambda^{2,\alpha}_{1,N}(\K)$, if $0<\alpha<s$ is chosen sufficiently small.
 Further, by arguing analogously to Step 3 in the proof of Lemma \ref{lem:corner-2D} we obtain
\ben
f(\cdot,\xi)\in \Lambda^{0,\alpha}_0,\quad f(O',\xi)=0,\quad
w(\cdot,\xi)\in \Lambda^{2,\alpha}_{0,D}(\K)\cap \Lambda^{2,\alpha}_{0,N}(\K),\quad\forall\;\xi\in\R.
\enn Together with (\ref{eq:13}) this leads to $\F f_0(O',\xi)=0$ for all $\xi\in \R$ and thus $f_0(O',x_3)=0$, $x_3\in \R$. In view of the definition of $f_0$ on the right hand side of (\ref{eq:12}) we see that
\ben
0=f_0(O',x_3)=k^2\nabla_x^l h(O',x_3)\;u_1(O',x_3)\quad\mbox{for all}\quad |x_3|<1/2,
\enn
where we have used the fact that $\nabla^j u=0$ on $\partial \W\cap U_1$ for all $j=0,1,\cdots,l+1$.
By the continuity of $\nabla_x^l h(O',x_3)$ near $x_3=0$ and using the assumption $\nabla_x^l h(O)\neq 0$, we get
$u_1(O',x_3)\equiv 0$ for $|x_3|$ sufficiently small. Further, $u_1(O',x_3)\equiv 0$ for all $x_3\in \R$ by the analyticity, and in particular $u_1(O)=0$.

For $\beta=(\beta_1,\beta_2)\in \N_0^2$, let $|\beta|=\beta_1+\beta_2$.
Denote by $u_{1,j}(\cdot, x_3)$, $j\in \N_0$, the homogeneous Taylor expansion of degree $j$ of $u_1(\cdot,x_3)$ at $x'=O'$ which takes the form
\be\label{eq:17}
u_{1,j}(x',x_3)=\sum_{|\beta|=j} c_{\beta,j}(x_3)\; x'^\beta,\quad  x'^\beta=x_1^{\beta_1}x_2^{\beta_2}.
\en For some $m\in \N$, $m>1$, we make the induction hypothesis that
\be\label{eq:15}\begin{split}
&w(\cdot,\xi)\in \Lambda_{1-m}^{2,\alpha}(\K),\quad f(\cdot,\xi)\in \Lambda_{1-m}^{0,\alpha}(\K)\quad&&\mbox{for all}\;\xi\in \R,\\
&u_{1,j}\equiv0\quad&&\mbox{for all}\; j<m.
\end{split}\en
We need to prove that
\ben
w(\cdot,\xi)\in \Lambda_{-m}^{2,\alpha}(\K),\quad f(\cdot,\xi)\in \Lambda_{-m}^{0,\alpha}(\K)\quad\mbox{for all}\;\xi\in \R,\quad u_{1,m}\equiv0.
\enn
Note that the relations in (\ref{eq:15}) for $m=1$ have been verified in the previous step and that the last relation in (\ref{eq:15}) implies that, for all $x_3\in \R$,
\be\label{eq:25}
u_1(x',x_3)-u_{1,m}(x',x_3)=\mathcal{O}(|x'|^{m+1})\quad\mbox{as}\quad |x'|\rightarrow 0.
\en

The right hand side of the equation in (\ref{eq:13}) takes the form (cf. (\ref{eq:12}))
\be\label{eq:14}
f=k^2\chi \F(\varphi\,\nabla_x^l(hu_1))-k^2\chi \F(\varphi\,\nabla_x^l(qu))-\F([\Delta,\chi\varphi](\nabla_x^l u))
+\xi^2 w.
\en
Obviously, $\F([\Delta,\chi\varphi](\nabla_x^l u))(\cdot, \xi)\in \Lambda_{-m}^{0,\alpha}(\K)$.
Using the induction hypothesis on $w$ and the regularity of $q$ it can be readily checked that, for all $\xi\in \R$,
\ben
\xi^2 w(\cdot,\xi)\in \Lambda_{1-m}^{2,\alpha}(\K)\subset \Lambda_{-m}^{0,\alpha}(\K), \quad \F(\varphi\,\nabla_x^l(qu))(\cdot,\xi)\in \Lambda_{-m}^{0,\alpha}(\K).
\enn
To estimate the first term on the right hand side of (\ref{eq:14}), we use (\ref{eq:25}) and the assumption on $q$ to derive the decompositions
\ben
h(x',x_3)\,u_1(x',x_3)&=&h(O',x_3)\,u_{1,m}(x',x_3)+\mathcal{O}(|x'|^{l+m+1}),\\
\varphi(x_3)\nabla_x^l[h(x',x_3)u_1(x',x_3)]&=&\varphi(x_3)\nabla_x^l[h(O',x_3)]\;u_{1,m}(x',x_3)+\varphi(x_3)\mathcal{O}(|x'|^{m+1})
\enn
as $|x'|\rightarrow0$.
Taking the partial Fourier transform gives
\be\label{eq:16}
\F\left(\varphi\,(\nabla_x^l(hu_1)- \nabla_x^l h(O',x_3)\; u_{1,m} )\right) (\cdot,\xi)\in \Lambda_{-m}^{0,\alpha}(\K)\quad\mbox{for all}\quad \xi\in \R.
\en
Now, combining (\ref{eq:17}), (\ref{eq:14}) and (\ref{eq:16}) we see that
\ben
f(\cdot,\xi)-\chi k^2 \hat{p}_m(\cdot,\xi)\in \Lambda_{-m}^{0,\alpha}(\K),
\enn where $\hat{p}_m(\cdot,\xi)\in \PP_m$ is defined as
\ben
\hat{p}_m(x',\xi)=\sum_{|\beta|=m} \tilde{c}_\beta(\xi)\,x'^\beta,\quad
\tilde{c}_\beta(\xi):=
\F\left(\varphi(x_3)\nabla_x^l[h(O',x_3)]\;\;c_{\beta,m}(x_3)\right)(\xi).
\enn Note that the coefficients $\tilde{c}_\beta$ are analytic on $\R$ and belong to $L^1(\R)$, since the functions
$\varphi\nabla_x^l[h(O',x_3)]c_{\beta,m}$
are continuous and have a compact support on $\R$.
Applying the arguments in the proof of Lemma \ref{lem:corner-2D}, we may conclude that the lowest order term $Q_{m+2}(\cdot,\xi)$ of $w(\cdot,\xi)$ near the corner of $\K$ belongs to $\PP_{ m+2}$ and satisfies the Cauchy problem
\be\label{eq:18}\begin{split}
&\Delta_{x'} Q_{m+2}(\cdot,\xi)=\hat{p}_m(\cdot,\xi)\quad&&\mbox{in}\quad \K,\\
&Q_{m+2}(\cdot,\xi)=\partial_{\nu}Q_{m+2}(\cdot,\xi)=0\quad&&\mbox{on}\quad \partial \K
\end{split}
\en
for all $\xi\in\R$. 

Since $\F p_m (x', \cdot)\in L^1(\R)$,  its inverse Fourier transform is given by
\be\label{eq:28}
p_m(x',x_3)=\varphi(x_3)\nabla_x^l[h(O',x_3)]\;u_{1,m}(x',x_3).
\en  Recalling the induction hypothesis that $u_{1,j}(x',x_3)\equiv 0$ for all $0\leq j<m$ (see (\ref{eq:17}) and (\ref{eq:15})), we get
\ben
u_1(x',x_3)= u_{1,m}(x', x_3)+\mathcal{O}((|x'|+|x_3-t|)^{m+1}),
\enn as $|x'|\rightarrow 0$, $x_3\rightarrow t$ for all $t\in\R$. Hence,  $u_{1,m}$ coincides with the lowest order term $U_{1,m}$ in the Taylor expansion of $u_1$ at $(x', t)\in \R^3$. As a consequence of Proposition \ref{P7} (iii), it holds for all $x_3\in\R$ that $\Delta_{x'} u_{1,m}(x',x_3)=\Delta_x U_{1,m}\equiv0$ and thus
 \be\label{eq:29}
 \Delta_{x'}(p_m(x', x_3))=\varphi(x_3)\nabla_x^l[h(O',x_3)]\;\Delta_{x'} u_{1,m}(x',x_3)\equiv0.
  \en
Taking the partial Fourier transform of (\ref{eq:29}) with respect to $x_3$ gives
 \ben
 \Delta_{x'}\hat{p}_m(x', \xi)\equiv0\quad \mbox{for all}\quad \xi\in\R.
 \enn
 Together with (\ref{eq:18}) this implies that $Q_{m+2}(\cdot, \xi)$ is a biharmonic function with vanishing Dirichlet and Neumann data on $\partial \K$.
 Now, applying Proposition \ref{P6} to $Q_{ m+2}$ gives the relations $Q_{m+2}(\cdot,\xi)=\hat{p}_m(\cdot,\xi)\equiv 0$ for all $\xi\in\R$, which further result in
 \ben
f(\cdot,\xi)\in \Lambda_{-m}^{0,\alpha}(\K),\quad w(\cdot,\xi)\in \Lambda_{-m}^{2,\alpha}(\K),\quad
p_m\equiv 0.
\enn
Since  $\nabla_x^l[h(O',x_3)]\neq 0$ in a neighborhood of $x_3=0$, it follows from (\ref{eq:28}) that
  $u_{1,m}(x',x_3)\equiv0$ in a neighborhood of the plane $x_3=0$ in $\R^3$.
  Hence, $u_{1,m}\equiv0$ in $\R^3$ due to the analyticity.
    This proves the relation $\nabla^m u_1 =0$ at $O$.
  Since $m\in \N_0$ is arbitrary, the relation $u_1\equiv 0$ follows.
 Finally, we obtain $u_2\equiv 0$ by unique continuation.
   The proof of Lemma \ref{lem:edge-3D} is thus complete.

\end{proof}

\section{Curvilinear polygons and polyhedra always scatter}\label{sec: curvilinear}

In this section we shall adapt the arguments in Sections \ref{sec:3} and \ref{3D-edges} to the case of a curvilinear polygon or polyhedron. Lemma \ref{lem} in the cases (i) and (ii) can be equivalently stated as

\begin{lem}\label{lem:curvilinear} Let $D$ be a bounded curvilinear polygon or polyhedron and let
 the potential $q$ satisfy the assumption (a) near a  corner or edge point $P\in\partial D$.
 For $\epsilon>0$ sufficiently small, let $\Gamma_\epsilon=B_\epsilon(P)\cap \partial D$ be a sub-boundary
 of $\partial D$ such that $P\in \Gamma_\epsilon$.
  If the solution pair $u_j\in H^2(B_\epsilon(P))$ ($j=1,2$) solves the coupling problem (\ref{eq:30}),
then $u_1=u_2\equiv 0$.
\end{lem}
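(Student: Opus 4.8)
The plan is to reduce the curvilinear case to the flat case already handled in Lemmas \ref{lem:corner-2D} and \ref{lem:edge-3D} by straightening the boundary with the $C^2$ diffeomorphism $\Psi$ from Definitions \ref{def-2d} and \ref{def-3d}. First I would apply $\Psi$ near the singular point $P$, pushing the pair $(u_1,u_2)$ forward to functions $\tilde u_j := u_j\circ\Psi^{-1}$ defined on $\K_\omega\cap B_1$ (in 2D) or $(\K_\omega\cap B_1)\times(-1,1)$ (in 3D). Because $\nabla\Psi(P)=I$ and $\Psi(P)=O$, the Laplacian transforms into a second-order elliptic operator $\mathcal L$ whose principal part agrees with $\Delta$ at the corner/edge point: writing $A(x)=(\nabla\Psi\,\nabla\Psi^{\top})\circ\Psi^{-1}$, one has $\mathcal L\tilde u = \operatorname{div}(A\nabla\tilde u) + (\text{lower order})$ with $A(O)=I$ and $A-I$ vanishing at $O$ (indeed $A\in C^{1}$ if $\Psi\in C^2$). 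The transmission conditions $\partial_\nu^j(u_1-u_2)=0$ on $\Gamma_\epsilon$ transform into conormal-type conditions $\partial_\nu^j(\tilde u_1-\tilde u_2)=0$ on the flat piece $\partial\K_\omega\cap B_1$ for $j=0,\dots,l+1$, since $\Psi$ maps $\Gamma_\epsilon$ onto the flat boundary and the flatness of $\Psi$ to first order at $P$ preserves the vanishing of normal derivatives up to the relevant order (the mismatch terms again vanish at $O$).

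Next I would set $u:=\tilde u_1-\tilde u_2$ and run the Kondrat'ev/Nazarov--Plamenevsky machinery exactly as in the proof of Lemma \ref{lem:corner-2D}: introduce the cut-off $\chi$ (and $\varphi$ in 3D), define $v=\chi\nabla^l u$ (resp.\ $v=\chi\varphi\nabla^l_x u$), and observe that $v$ solves an inhomogeneous Laplace equation in the model cone $\K$ (resp.\ wedge) with vanishing Cauchy data on $\partial\K$. The crucial point is that all the extra terms produced by the change of variables — namely $(A-I):\nabla^2 v$, the first-order terms, and the commutators of $\nabla^l$ with the non-constant coefficients — are \emph{lower order relative to the weight}, precisely because $A-I$, its derivatives, and the curvature-induced coefficients all vanish (or are $C^{1}$ and vanish) at $O$. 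Concretely, if $v\in\Lambda^{2,\alpha}_{1-m}$ then $(A-I)\nabla^2 v\in\Lambda^{2,\alpha}_{-m}$ and similarly for the other corrections, so they can be absorbed into the remainder $f_m$ at each inductive step and never contribute to the ``lowest order term'' $q_{m+2}\in\PP_{m+2}$. Thus the bootstrap of Steps 1--5 of Lemma \ref{lem:corner-2D} (and, in 3D, the partial Fourier transform argument of Lemma \ref{lem:edge-3D}, where the $\xi^2 w$ term and the variable-coefficient corrections are all of lower weighted order) goes through verbatim, yielding $\nabla^j\tilde u_1(O)=0$ for all $j$, hence $\tilde u_1\equiv 0$ by analyticity of $u_1$ (which is genuinely harmonic-modulo-$k^2$, i.e.\ analytic, in the \emph{original} coordinates), and then $\tilde u_2\equiv 0$, i.e.\ $u_1=u_2\equiv 0$ near $P$.

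The main obstacle I anticipate is bookkeeping the variable coefficients through the weighted Hölder estimates: one must check that every term introduced by $\Psi$ genuinely lands one unit lower in the weight scale, which requires the vanishing of $A-I$ at $O$ (from $\nabla\Psi(P)=I$) together with enough regularity ($\Psi\in C^2$, so $A\in C^{0,\alpha}\cap C^1$, matching the $C^{l,s}$ hypothesis on $q$ when $l=0$, and for $l\ge 1$ one needs $\Psi$ smooth enough that the composed coefficients stay in $W^{l,\infty}\cap C^{l-1,s}$ — this is where I would be careful, and it may force a slight strengthening of the smoothness of $\partial D$ when $l\ge 1$). A secondary technical point is verifying that the transformed conormal transmission conditions $\partial_\nu^j(\tilde u_1-\tilde u_2)=0$ still hold with the Euclidean normal on the flat model boundary; this follows from $\nabla\Psi(P)=I$ modulo lower-order terms that vanish on $\Gamma_\epsilon$ after using that $\tilde u_1-\tilde u_2$ and its first normal derivative already vanish there. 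Once these two points are settled, the rest is a word-for-word repetition of the earlier proofs, and one concludes by unique continuation exactly as before.
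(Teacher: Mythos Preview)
Your approach is essentially the same as the paper's: both straighten the boundary via $\Psi$ and observe that, because $\nabla\Psi(P)=I$ and $\Psi\in C^2$, the variable-coefficient perturbation terms land one unit lower in the weighted H\"older scale and can therefore be absorbed into the remainder at every inductive step. The one noteworthy difference is the order of operations. The paper applies the cut-off and takes $\nabla^l_x$ in the \emph{original} coordinates first, so that $v=\chi\nabla^l_x(u_1-u_2)$ already satisfies $v=\partial_\nu v=0$ on the curved piece $\Gamma_\epsilon$, and only then transforms $v\mapsto\tilde v=v\circ\Psi^{-1}$; you instead transform $u_j\mapsto\tilde u_j$ first and differentiate afterwards in the new coordinates. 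The paper's order has two advantages. First, it works with $\Psi\in C^2$ for every $l\ge0$, since after the change of variables one only needs $\tilde v\in C^{1,\alpha}\cap H^2$; your order requires $\Psi$ roughly $C^{l+2}$ to make $\nabla^l_y\tilde u$ well defined and sufficiently regular, which is precisely the ``strengthening of the smoothness of $\partial D$'' you anticipated. Second, it bypasses your secondary technical point about the transmission conditions: since $\partial_\nu^j(u_1-u_2)=0$ for $j\le l+1$ forces the \emph{full} gradient $\nabla v=0$ on $\Gamma_\epsilon$, the zero Cauchy data transform directly to $\tilde v=\partial_\nu\tilde v=0$ on the flat model boundary with no lower-order mismatch to control. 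The paper also keeps the polynomial $p_m$ (hence its harmonicity via Proposition~\ref{P5}) in the original analytic coordinates, so no issue arises from $\tilde u_1$ being only $C^2$.
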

\begin{proof} For brevity we only indicate the changes that are necessary to reduce the case of a curvilinear domain
to a sector or wedge domain. 
We start with the same argument as in the proof of Lemmas \ref{lem:corner-2D} and \ref{lem:edge-3D} by
choosing an appropriate cut-off function $\chi$ in a neighbourhood of $P$ in $D$. Consequently, the function
$v:=\chi(x)\nabla_x^l (u_1-u_2)$ satisfies the boundary value problem (cf. (\ref{eq:2}) and (\ref{eq:12}))
\be\label{eq:19}
\Delta\, v=f\quad\mbox{in}\quad D\cap B_\epsilon(P),\quad
v=\partial_\nu v=0 \quad\mbox{on}\quad \Gamma_\epsilon
\en for some H\"older continuous function $f$ supported in a neighborhood of $P$ in $D$.
 Denote by $y=\Psi(x)$, $y=(y_1,y_2,\cdots,y_n)$, $x=(x_1,x_2,\cdots,x_n)$,
 the diffeomorphism specified in Definitions \ref{def-2d} and \ref{def-2d} mapping a curvilinear domain near $P$
 to a sector or wedge domain with flat boundaries. For notational convenience we write $\mathcal{U}=\K$ in two dimensions and $\mathcal{U}=\W=\K\times \R$ in three dimensions.
 Under the transformation
 \ben
 \tilde{v}(y)=v(\Psi^{-1}(y)),\quad \tilde{f}(y)=f(\Psi^{-1}(y)),\quad y\in \R^n,
 \enn we have
 \be\begin{split} \label{eq:20}
 &\Delta_y \tilde{v}=\Delta_x v(\Psi^{-1}(y))-g_{\tilde{v}}(y)=\tilde{f}(y)-g_{\tilde{v}}(y)\quad&&\mbox{in}\quad  \mathcal{U},\\
&\tilde{v}=\partial_\nu\tilde{v}=0\quad&&\mbox{on}\quad\partial\mathcal{U}.
 \end{split}\en
 where
  \ben
 &&g_{\tilde{v}}(y):=\sum_{i,j=1}^n [a_{ij}(y)-\delta_{ij}]\; \frac{\partial^2 \tilde{v}}{\partial y_j\partial y_i}+\sum_{i=1}^n b_i(y) \frac{\partial \tilde{v}}{\partial y_i},\\
 &&a_{ij}(y):=\left(\nabla_x\, y_i(x)\cdot \nabla_x\, y_j(x)\right)|_{x=\Psi^{-1}(y)},\\
 &&b_i(y):=(\Delta_x\, y_i(x))|_{x=\Psi^{-1}(y)}.
 \enn Here $\delta_{ij}$ is the Kronecker delta symbol.
Compared to the right hand sides of (\ref{eq:2}) and (\ref{eq:12}), the term $-g_{\tilde{v}}$ in (\ref{eq:20}) is additional.
Since $\nabla \Psi=I$ and $\Psi$ is of $C^2$-smoothness, it holds that
\ben
a_{ij}(y)-\delta_{ij}=\mathcal{O}(|y|),\qquad b_i(y)=\mathcal{O}(1)\quad\mbox{as}\; y\rightarrow O,\quad i,j=1,2,\cdots,n.
\enn
Hence, if $\tilde{v}\in \Lambda^{2,\alpha}_{1-m}(\mathcal{U})$ for some $m\in \N_0$, then
it must hold that $g_{\tilde{v}}\in \Lambda^{0,\alpha}_{-m}(\mathcal{U})$ because
\ben
[a_{ij}(y)-\delta_{ij}]\; \frac{\partial^2 \tilde{v}}{\partial y_j\partial y_i}\in \Lambda^{0,\alpha}_{-m}(\mathcal{U}),
\quad b_i(y) \frac{\partial \tilde{v}}{\partial y_i}\in \Lambda^{0,\alpha}_{-m}(\mathcal{U}),
\enn for all $i,j=1,\cdots,n$.
Proceeding by induction on $m$, suppose that $f\in \Lambda^{2,\alpha}_{1-m}(B_\epsilon(P)\cap D)$ takes the form
\ben
f=\chi p_m+f_m,\quad\chi p_m\in \Lambda_{1-m}^{0,\alpha}(D\cap B_\epsilon(P)),\quad f_m\in \Lambda_{-m}^{\,0,\alpha}( D\cap B_\epsilon(P))
\enn
for some $p_m\in\PP_{ m}$.
Then by the assumptions on $\Psi$ the transformed function $\tilde{f}$ can be written as
\ben
\tilde{f}=\tilde{\chi} q_m+g_m,\quad \tilde{\chi}q_m\in \Lambda_{1-m}^{\,0,\alpha}(\mathcal{U}),\quad g_m\in \Lambda_{-m}^{\,0,\alpha}(\mathcal{U})
\enn for some $q_m\in\PP_{m}$. Further, the relation $q_m\equiv 0$ then implies the vanishing of $p_m$ and also
of the $m$-th order terms in the Taylor expansion of $u_1$ at $P$.
 Applying the arguments in the proof of Lemmas \ref{lem:corner-2D} and \ref{lem:edge-3D} to the equation
(\ref{eq:20}), we successively obtain $q_m\equiv0$ for all $m\in \N_0$, which implies $u_1=u_2\equiv 0$.
\end{proof}

\section{Circular cones always scatter}\label{sec:cone}
This section is concerned with the scattering problems corresponding to a penetrable obstacle with circular conic corners on the boundary. We first present the solvability of the Laplace equation in a three-dimensional cone and then verify Lemma \ref{lem} in the case (iii).
\subsection{Solvability of the Laplace equation in a circular conic domain}\label{Regularity-C}
Let $\CC$ be the infinite circular cone introduced in Section \ref{cone}. For $\beta\in\R$, $\kappa\in\N_0$ and $\alpha\in[0,1)$,  we define the
 weighted spaces $V_\beta^{\kappa}(\CC)$, $V_{\beta,D/N}^{\kappa}(\CC)$, $\Lambda_\beta^{\kappa,\alpha}(\CC)$
 and $\Lambda_{\beta,D/N}^{\kappa,\alpha}(\CC)$ in the same way as in Section \ref{Regularity}, where only
 the sector $\K\subset\R^2$ is replaced with
  the cone $\CC\subset\R^3$ and $r$ denotes the distance of $x$ to the conic point $O$. In this section we denote by
  $\Delta_D$ resp. $\Delta_N$ the operator of the Dirichlet resp. Neumann problem corresponding to the inhomogeneous Laplace equation with the homogeneous boundary condition on $\partial \CC$ acting on the spaces $V_{\beta,D/N}^{\kappa}(\CC)$ and $\Lambda_{\beta,D/N}^{\kappa,\alpha}(\CC)$.
 Consider the Dirichlet and Neumann boundary value problems
\be\label{DNP}
\Delta_D\, u=f, \quad\quad
\Delta_N\, u=f \quad\mbox{on}\quad \overline{\CC}.
\en
Using spherical coordinates we may rewrite the Laplace operator as
\ben
\Delta=\frac{1}{r^2}\left\{(r \frac{\partial}{\partial r})^2+r\frac{\partial}{\partial r}+\hat{\Delta} \right\},\quad\hat{\Delta}:=\frac{1}{\sin\theta}\left(\frac{\partial}{\partial \theta} \sin\theta \frac{\partial}{\partial \theta} +
\frac{1}{\sin\theta} \frac{\partial ^2}{\partial \varphi^2}\right)
\enn
where $\hat{\Delta}$ is the Beltrami operator defined on $\s^2$.
To study the solvability of the boundary value problems (\ref{DNP}) in the weighted Sobolev spaces $V_\beta^{\kappa}(\CC)$ and H\"older spaces $\Lambda_{\beta}^{\kappa,\alpha}(\CC)$, we shall apply Kondratiev's method \cite{Kondratiev} by looking for solutions of the homogeneous problems  (\ref{DNP}) (i.e., $f=0$) in the form $u(x)=r^\lambda V(\hat{x})$ with $\hat{x}=x/r\in\s^2$; cf. \cite{NP} and \cite{KMR}. Then $V$  satisfies the eigenvalue problem
\be\label{eigenvalue}\begin{split}
&\hat{\Delta} V+\lambda  (\lambda+1) V=0&&\quad\mbox{in}\quad \Omega:=\s^2\cap \CC,\\
&V=0\quad\mbox{or}\quad \partial_\nu V=0&&\quad\mbox{on}\quad \partial \Omega.
\end{split}
\en
The Dirichlet and Neumann eigenvalues of (\ref{eigenvalue}), $\lambda_{D,j}$ and $\lambda_{N,j}$ ($j\in \Z\backslash\{0\}$), counted with their finite multiplicities, form a discrete set in $\R$. Further, there are corresponding orthogonal (in $L^2(\Omega)$) sequences of eigenfunctions $V_{j,D}$ and $V_{j,N}$ (see e.g. \cite[Chapter 2]{KMR}).

Below we present a more explicit description of the eigenvalues and eigenfunctions in our case of a circular cone. For this purpose we need the definition of
Legendre functions and spherical harmonic functions.
 For $\lambda \in \R$, denote by $P_\lambda$ the Legendre function of first kind satisfying the Legendre differential equation
\be\label{Legendre}
\frac{d}{dt}\left[(1-t^2)\frac{d f}{dt}   \right]+\lambda (\lambda+1)f=0.
\en
By $P_\lambda^m$ ($m\in\N_0$) we denote the associated Legendre functions of the first kind defined via
\ben
P_\lambda^m(t):=(1-t^2)^{m/2}\frac{d^m P_\lambda(t)}{dt^m},\quad m=0,1,\cdots,n,
\enn
which satisfy the associated Legendre differential equations
 \be\label{Legendre-A}
\frac{d}{dt}\left[(1-t^2)\frac{d f}{dt}\right]+\left[\lambda (\lambda+1)-\frac{m^2}{1-t^2}\right]f=0.
\en
Recall that the normalized spherical harmonic functions of order $n\in\N_0$ are defined by
\be\label{spherical-harmonics}
Y_n^m(\theta,\varphi):=\sqrt{\frac{2n+1}{4\pi}\frac{(n-|m|)\!}{(n+|m|)\!}}\,P_n^{|m|}(\cos\theta)\,e^{im\varphi}
\en
for all $m=-n,\cdots,n$.
By \cite{DP}, $\lambda\in \R$ is an eigenvalue to the Dirichlet resp. Neumann boundary value problem (\ref{eigenvalue}) if and only if there exists some $m\in \Z$ such that $P_\lambda^{|m|}(\cos \omega)=0$ resp.  $(P_\lambda^{|m|})'(\cos \omega)=0$,  with the associated eigenfunction $V=P_\lambda^{|m|}(\cos \theta) e^{im\varphi}\in C^\infty(\overline{\Omega})$. In the special case that $\lambda=n\in \N$ and $|m|\leq n-1$, the eigenfunction
$V=P_n^{|m|}(\cos \theta) e^{im\varphi}$ is a spherical harmonic function of order $n$ and $r^n V\in \PP_n$ is a homogeneous polynomial of order $n$. Note that Dirichlet and Neumann eigenvalues  may coincide. For instance, if $(P_2^0)'(\cos\omega)=P_2'(\cos\omega)=0$, then $P_2^1(\cos\omega)=\sin\omega (P_2^0)'(\cos\omega)=0$, implying that $\lambda=2$  is both a Dirichlet and Neumann eigenvalue.
Since $P^m_{\lambda}=P^m_{-\lambda-1}$, we have
\ben
\lambda_{D/N,-j}=-\lambda_{D/N,j}-1,\quad \lambda_{D,1}>0,\,\lambda_{N,1}=0,\, \lambda_{N,-1}=-1.
\enn

Below we state the solvability results for the Laplace equation in the weighted spaces $V_\beta^{2}(\CC)$ and $\Lambda_\beta^{2,\alpha}(\CC)$.

\begin{prop}\label{P2C}(\cite[Chapter 3, Theorem 5.1]{NP})
 The operator $\Delta_{D/N}:  V_{\beta, D/N}^{ 2}(\CC)\rightarrow V_{\beta}^{0}(\CC)$ is an isomorphism if $ 1/2-\beta\neq \lambda_{D/N,j}$ for all $j\in \Z\backslash\{0\}$.
\end{prop}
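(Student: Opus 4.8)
The plan is to follow Kondratiev's method, as anticipated in the text. Writing $\Om:=\s^2\cap\CC$ and inserting the ansatz $u(x)=r^\la V(\hat x)$ into the homogeneous problem $\Delta u=0$ with the corresponding boundary condition on $\pa\CC$, one is led to the holomorphic operator pencil
\[
\mathcal{A}(\la):=\hat\Delta+\la(\la+1)\colon\ \{V\in H^2(\Om):\ V=0\ \text{resp.}\ \pa_\nu V=0\ \text{on}\ \pa\Om\}\ \longrightarrow\ L^2(\Om).
\]
First I would record that, for each fixed $\la$, $\mathcal{A}(\la)$ is Fredholm of index zero by elliptic theory on the compact manifold $\Om$ with boundary, and that $\mathcal{A}(\la)$ is invertible whenever $|\real\la|$ is bounded and $|\ima\la|$ is large, since then $\la(\la+1)$ lies far from $[0,\infty)$, where the spectrum of $-\hat\Delta$ (with the respective boundary condition) is confined; a G\aa rding estimate makes this quantitative. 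Analytic Fredholm theory then gives that $\mathcal{A}(\la)^{-1}$ exists and is meromorphic on $\C$, with poles exactly at those $\la$ for which \eqref{eigenvalue} has a nontrivial solution, i.e. at $\la=\la_{D/N,j}$, $j\in\Z\ba\{0\}$; all of these are real because $\la(\la+1)$ must belong to $\operatorname{spec}(-\hat\Delta)\subset[0,\infty)$, so the only relevant obstruction is a real value of $1/2-\beta$ hitting one of them.

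The next, and technically central, step is a parameter-dependent a priori estimate: on every vertical line $\real\la=\g$ with $\g\notin\{\la_{D/N,j}\}$ one has, uniformly in $\ima\la$,
\[
\|V\|_{H^2(\Om)}^2+|\la|^2\,\|V\|_{H^1(\Om)}^2+|\la|^4\,\|V\|_{L^2(\Om)}^2\ \le\ C_\g\,\|\mathcal{A}(\la)V\|_{L^2(\Om)}^2,
\]
with $C_\g$ controlled by $\operatorname{dist}(\g,\{\la_{D/N,j}\})$. I would obtain this from Agmon--Douglis--Nirenberg estimates with the large parameter $\la$ (homogenizing by treating $\la$ as an extra covariable), using the large-$|\ima\la|$ invertibility for the tails of the line and a standard compactness/contradiction argument for the bounded part, which is legitimate precisely because $\g$ avoids the spectrum.

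Finally the statement is assembled via the Mellin transform $\mathcal M$ in $r$. A direct computation of the norms shows that $u\in V_\beta^{2}(\CC)$ if and only if $\mathcal M u$ lies in $L^2$ of the line $\real\la=1/2-\beta$ in the $\la$-weighted $H^2(\Om)$ norm above, while $f\in V_\beta^{0}(\CC)$ if and only if $\mathcal M f$ lies in $L^2$ of the line $\real\la=-\beta-3/2$; moreover $\mathcal M(\Delta u)(\la-2)=\mathcal{A}(\la)\,(\mathcal M u)(\la)$. Under the hypothesis $1/2-\beta\neq\la_{D/N,j}$, the line $\real\la=1/2-\beta$ is free of poles of $\mathcal{A}(\cdot)^{-1}$, so for given $f$ one sets $(\mathcal M u)(\la):=\mathcal{A}(\la)^{-1}(\mathcal M f)(\la-2)$ on that line; the uniform estimate together with Mellin--Plancherel then yields $u:=\mathcal M^{-1}(\mathcal M u)\in V_{\beta,D/N}^{2}(\CC)$ with $\|u\|_{V_\beta^2}\le C\|f\|_{V_\beta^0}$, solving $\Delta u=f$ together with the boundary condition, and uniqueness follows because $\Delta u=0$ with $u\in V_\beta^2(\CC)$ forces $\mathcal{A}(\la)(\mathcal M u)(\la)\equiv0$ on the line, hence $\mathcal M u\equiv0$ and $u\equiv0$. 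Thus $\Delta_{D/N}$ is a bounded bijection with bounded inverse, i.e. an isomorphism.

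I expect the main obstacle to be the uniform-in-$\ima\la$ resolvent bound of the second step: one must control $\mathcal{A}(\la)^{-1}$ along the whole critical line rather than merely at individual points, which requires the parameter-elliptic estimates carried out carefully both near and far from $\operatorname{spec}(-\hat\Delta)$, together with the fact that the critical line carries no eigenvalue (guaranteed by the reality of the $\la_{D/N,j}$). The remaining bookkeeping --- the structure of the pencil, the identification of its spectrum through \eqref{eigenvalue}, and the matching of the weight $\beta$ to the line $\real\la=1/2-\beta$, with the symmetry $\la_{D/N,-j}=-\la_{D/N,j}-1$ serving as a consistency check --- is routine once the model estimate is in hand.
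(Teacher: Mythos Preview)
The paper does not give its own proof of this proposition; it simply quotes the result from \cite[Chapter 3, Theorem 5.1]{NP}. Your sketch is the standard Kondratiev argument that underlies that reference: reduce via the Mellin transform in $r$ to the holomorphic pencil $\mathcal A(\la)=\hat\Delta+\la(\la+1)$ on $\Om=\s^2\cap\CC$, identify the singular set of $\mathcal A(\cdot)^{-1}$ with the eigenvalues of (\ref{eigenvalue}), obtain uniform parameter-elliptic estimates on the line $\real\la=1/2-\beta$ when it avoids those eigenvalues, and conclude by Mellin--Plancherel. The identification of the critical line is consistent with the paper's conventions (the formula $\real\la=l-\beta-n/2$ with $l=2$, $n=3$ gives $1/2-\beta$, matching the 2D value $1-\beta$ in Proposition \ref{P2}), and the symmetry $\la_{D/N,-j}=-\la_{D/N,j}-1$ is indeed the consistency check one expects from $\mathcal A(\la)=\mathcal A(-\la-1)$. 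So your approach is correct and is essentially the route taken in the cited source; there is nothing further to compare against in the present paper.
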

\begin{prop}\label{P3C}(\cite[Chapter 3, Theorem 6.11]{NP})
 The operator $\Delta_{D/N}:  \Lambda_{\beta, D/N}^{ 2,\alpha}(\CC)\rightarrow \Lambda_{\beta}^{0,\alpha}(\CC)$ is an isomorphism if $ 2+\alpha-\beta\neq \lambda_{D/N,j}$ for all $j\in \Z\backslash\{0\}$.
\end{prop}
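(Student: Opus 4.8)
Proposition \ref{P3C} is a classical Kondrat'ev-type isomorphism theorem, and I would either invoke it directly from \cite{NP} or reconstruct its proof along the following lines. The plan is to reduce the problem on the infinite cone $\CC$ to a one-parameter family of boundary value problems on the spherical cap $\Omega=\s^2\cap\CC$: writing $t=\ln r$ turns $\CC$ into the cylinder $\R_t\times\Omega$ and turns $r^2\Delta$ into $\pa_t^2+\pa_t+\hat\Delta$, with $\hat\Delta$ the Beltrami operator on $\Omega$. Seeking homogeneous solutions $u=r^\la V(\hat x)$ of $\Delta u=0$ with the Dirichlet resp.\ Neumann condition on $\pa\CC$ produces the operator pencil $\mathcal{A}(\la)=\hat\Delta+\la(\la+1)$ on $\Omega$, and by the eigenvalue problem (\ref{eigenvalue}) the operator $\mathcal{A}(\la)$ is invertible precisely when $\la\notin\{\la_{D/N,j}:j\in\Z\backslash\{0\}\}$. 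Elements of the domain space $\Lambda^{2,\al}_\beta(\CC)$ are, near the vertex, of borderline homogeneity $2+\al-\beta$ (the bound $|x|^{\beta-\al-2}|u|\lesssim1$ being built into the norm), so $\Delta u$ has borderline homogeneity $\al-\beta$, matching $\Lambda^{0,\al}_\beta(\CC)$; the hypothesis $2+\al-\beta\neq\la_{D/N,j}$ says exactly that this homogeneity line is free of spectrum of $\mathcal{A}$.

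Next I would establish the a priori estimate $\|u\|_{\Lambda^{2,\al}_{\beta}(\CC)}\le C\|\Delta u\|_{\Lambda^{0,\al}_\beta(\CC)}$ for $u\in\Lambda^{2,\al}_{\beta,D/N}(\CC)$ by a dyadic localization: decompose $\CC$ into the annuli $\CC\cap\{2^{-k-1}<r<2^{-k+1}\}$, $k\in\Z$, rescale each by $x\mapsto2^kx$ onto a fixed reference annulus (whose boundary portion is a smooth piece of the lateral surface of the cone), apply the classical interior and boundary Schauder estimates there, track the weight factor $2^{-k\beta}$, and sum over $k$. Because the line $\real\la=2+\al-\beta$ carries no eigenvalue, the model operator on the whole cone has trivial kernel in this class, which is exactly what lets the dyadic sum close into the clean estimate with no lower-order remainder; this same estimate also yields injectivity and closed range.

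For surjectivity I would first solve in the weighted Sobolev scale: given $f\in\Lambda^{0,\al}_\beta(\CC)$ supported away from the vertex and from infinity, one has $f\in V^0_{\beta'}(\CC)$ for any $\beta'$, so I choose $\beta'$ with $1/2-\beta'$ off the $\la_{D/N,j}$ and with no eigenvalue lying strictly between the Sobolev line $\real\la=1/2-\beta'$ and the H\"older line $\real\la=2+\al-\beta$. Proposition \ref{P2C} then provides a solution $u\in V^2_{\beta',D/N}(\CC)$, and the absence of an intervening eigenvalue, via an asymptotic-expansion argument analogous to Proposition \ref{P4} together with the dyadic Schauder bootstrap above, shows that no singular term $r^{\la_{D/N,j}}V_{j,D/N}$ is produced, so $u\in\Lambda^{2,\al}_{\beta,D/N}(\CC)$; a density argument in $\Lambda^{0,\al}_\beta(\CC)$ removes the support restriction. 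Uniqueness is the Liouville-type statement that $\Delta u=0$ with $u\in\Lambda^{2,\al}_{\beta,D/N}(\CC)$ forces $u\equiv0$: expanding $u$ in the eigenfunctions $V_{j,D/N}$ and using the growth and decay at $0$ and $\infty$ dictated by the weight leaves no admissible homogeneous harmonic mode once the spectral condition is imposed. Injectivity, the a priori estimate, and surjectivity then give the isomorphism by the open mapping theorem.

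The main obstacle is the H\"older, rather than $L^2$, analysis near the conic point: the Mellin/Fourier transform in $t$ is not an isometry on $\Lambda^{0,\al}$, so the estimates cannot be read off directly from the pencil $\mathcal{A}(\la)$ and must be routed through the scaling-and-summation argument; one must also be careful about how the order $2$ of the operator and the H\"older exponent $\al$ combine to shift the spectral condition into the form $2+\al-\beta\neq\la_{D/N,j}$, in contrast with the condition $1/2-\beta\neq\la_{D/N,j}$ in the Sobolev case of Proposition \ref{P2C}.
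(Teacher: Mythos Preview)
The paper does not prove Proposition \ref{P3C} at all: it is stated with a direct citation to \cite[Chapter 3, Theorem 6.11]{NP} and used as a black box. Your first option, to invoke the result from \cite{NP}, is therefore exactly what the paper does, and nothing more is required here.

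Your reconstruction sketch is the standard Kondratiev--Nazarov--Plamenevsky argument and is correct in outline; it is essentially how the result is proved in \cite{NP}. The only comment worth making is that your surjectivity step, as written, is slightly roundabout: you pass through the Sobolev isomorphism of Proposition \ref{P2C} and then bootstrap back to H\"older regularity, whereas the treatment in \cite{NP} works directly in the weighted H\"older scale via the Mellin transform combined with local Schauder theory on the cylinder. Either route is fine, and your remark that the Mellin transform is not an $L^2$ isometry on $\Lambda^{0,\alpha}$, forcing the estimates through dyadic scaling rather than Parseval, correctly identifies the extra work needed in the H\"older setting.
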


\begin{prop}\label{P4C}(\cite[Chapter 3, Theorem 6.9]{NP})
Let $\gamma_1<\gamma\leq  2$  and assume $ 2+\alpha-\beta\neq \lambda_{D/N,j}$ for $\beta=\gamma,\gamma_1$ and for all $j\in \N$. Moreover, let $f\in \Lambda_{\gamma}^{0,\alpha}(\CC)\bigcap \Lambda_{\gamma_1}^{0,\alpha}(\CC)$ and denote by $v_\beta\in \Lambda_{\beta,D/N}^{ 2,\alpha}(\CC)$ the unique solution of the problem $\Delta_{D/N} v=f\in \Lambda_\beta^{0,\alpha}(\CC)$. Then the relation
\ben
v_{\gamma_1}=v_{\gamma}+\sum_j C_j\,r^{\lambda_{D/N,j}}\,V_{j,\,D/N}(\hat{x}),\quad C_j\in \C
\enn
holds,
where the sum is taken over all $j\in\N$ such that $\lambda_{D/N,j}\in( 2+\alpha-\gamma, 2+\alpha-\gamma_1)$.
\end{prop}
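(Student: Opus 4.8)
The plan is to reduce the statement to a Liouville‑type description of harmonic functions on the infinite cone $\CC$ and then to obtain that description by separating variables on the cross‑section $\Omega=\s^2\cap\CC$. For \textbf{Step 1} (reduction to the homogeneous problem), note that $v_\gamma$ and $v_{\gamma_1}$ solve $\Delta_{D/N}v=f$ with the \emph{same} right‑hand side and the \emph{same} homogeneous boundary condition, so their difference $w:=v_{\gamma_1}-v_\gamma$ satisfies $\Delta w=0$ in $\CC$ together with $w=0$ (resp. $\partial_\nu w=0$) on $\partial\CC\backslash\{O\}$; away from the vertex $w$ is $C^{2,\alpha}$ and, by interior and boundary elliptic regularity, $w\in C^\infty(\overline{\CC}\backslash\{O\})$. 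Reading off the $j=0$, $\kappa=2$ term in the $\Lambda^{2,\alpha}_{\beta,D/N}(\CC)$‑norm gives the uniform bound $|v_\beta(x)|\le C\,|x|^{2+\alpha-\beta}$ for all $x\in\CC$; combining this for $\beta=\gamma$ and $\beta=\gamma_1$ and using $\gamma_1<\gamma\le2$ yields the two‑sided estimate $|w(x)|\le C|x|^{2+\alpha-\gamma}$ for $|x|\le1$ and $|w(x)|\le C|x|^{2+\alpha-\gamma_1}$ for $|x|\ge1$. It then suffices to show that any harmonic function on $\CC$ with the prescribed homogeneous boundary condition and these two bounds is the stated finite sum.

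\textbf{Step 2} (separation of variables). Let $\{V_{j,D/N}\}$ be the $L^2(\Omega)$‑orthonormal system of eigenfunctions of the Beltrami operator in (\ref{eigenvalue}), with $-\hat{\Delta} V_{j,D/N}=\mu_j V_{j,D/N}$ and $\mu_j=\lambda_{D/N,j}(\lambda_{D/N,j}+1)\ge0$. For each fixed $r>0$ expand $w(r,\cdot)=\sum_j w_j(r)\,V_{j,D/N}$ in $L^2(\Omega)$, where $w_j(r):=\int_\Omega w(r\hat{x})\,\overline{V_{j,D/N}(\hat{x})}\,dS(\hat{x})$. Using the spherical form $\Delta=r^{-2}\{(r\partial_r)^2+r\partial_r+\hat{\Delta}\}$, the identity $\Delta w=0$, and Green's formula on $\Omega$ (the boundary terms vanish because $w(r,\cdot)$ and $V_{j,D/N}$ obey the same homogeneous condition on $\partial\Omega$), one finds that each coefficient solves the Euler equation $r^2 w_j''+2r w_j'-\mu_j w_j=0$ on $(0,\infty)$. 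Its indicial equation $\lambda^2+\lambda-\mu_j=0$ has the two real roots $\lambda_{D/N,j}$ and $-\lambda_{D/N,j}-1$, which are distinct since $\mu_j\ge0>-1/4$; hence no logarithmic solutions occur and $w_j(r)=a_j r^{\lambda_{D/N,j}}+b_j r^{-\lambda_{D/N,j}-1}$.

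\textbf{Step 3} (eliminating all but finitely many modes). Since $|w_j(r)|\le|\Omega|^{1/2}\sup_{\hat{x}}|w(r\hat{x})|$, the growth bounds of Step 1 force, on comparing powers of $r$ as $r\to0$ and $r\to\infty$, that $a_j=0$ unless $2+\alpha-\gamma\le\lambda_{D/N,j}\le2+\alpha-\gamma_1$, and $b_j=0$ unless $2+\alpha-\gamma\le-\lambda_{D/N,j}-1\le2+\alpha-\gamma_1$. The hypothesis $2+\alpha-\beta\ne\lambda_{D/N,j}$ for $\beta=\gamma,\gamma_1$ turns these into strict inclusions. As $\gamma_1<\gamma\le2$, the interval $(2+\alpha-\gamma,\,2+\alpha-\gamma_1)$ lies in $(0,\infty)$, so $\lambda_{D/N,j}$ and $-\lambda_{D/N,j}-1$ (whose sum is $-1$) cannot both belong to it; using $\lambda_{D/N,-j}=-\lambda_{D/N,j}-1$ and $\mu_{-j}=\mu_j$, any surviving $b_j$‑term is rewritten as $C_k r^{\lambda_{D/N,k}}V_{k,D/N}$ with $\lambda_{D/N,k}$ in that interval. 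By discreteness of the $\lambda_{D/N,j}$ only finitely many terms remain, so $w=\sum_j C_j r^{\lambda_{D/N,j}}V_{j,D/N}$, summed over $j$ with $\lambda_{D/N,j}\in(2+\alpha-\gamma,\,2+\alpha-\gamma_1)$, which is the assertion.

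The parts I expect to be most delicate are the book‑keeping in Step 2 — differentiating the eigenfunction expansion term by term in $r$ and verifying that the boundary contributions in Green's formula genuinely vanish, which rests on the regularity of $w$ up to $\partial\CC\backslash\{O\}$ — and, in Step 1, extracting the uniform decay/growth of $v_\beta$ on the whole infinite cone directly from the weighted Hölder norm. Alternatively, the same conclusion follows from Kondratiev's Mellin‑transform technique: the radial Mellin transform turns $\Delta_{D/N}$ into the operator pencil $\hat{\Delta}+\lambda(\lambda+1)$ on $\Omega$, whose inverse is meromorphic with poles exactly at $\lambda=\lambda_{D/N,j}$ and residues spanned by the $V_{j,D/N}$, and $v_{\gamma_1}-v_\gamma$ is the sum of the residues collected when the contour of the inverse Mellin integral is shifted from $\real\lambda=2+\alpha-\gamma$ to $\real\lambda=2+\alpha-\gamma_1$; this is the route taken in \cite{NP}.
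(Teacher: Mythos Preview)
The paper does not supply its own proof of this proposition; it is quoted verbatim from \cite[Chapter 3, Theorem 6.9]{NP} without argument. Your sketch is a correct outline of a proof: the reduction to the harmonic difference $w=v_{\gamma_1}-v_\gamma$, the two-sided polynomial bounds $|w(x)|\le C|x|^{2+\alpha-\gamma}$ near $O$ and $|w(x)|\le C|x|^{2+\alpha-\gamma_1}$ at infinity read off from the $j=0$ term of the weighted H\"older norm, the spectral expansion on the cross-section $\Omega$ leading to Euler equations for the radial coefficients, and the elimination of all but finitely many modes by comparing exponents are all standard and valid. You correctly flag the two technically delicate points (differentiating the expansion termwise in $r$ and checking that the boundary contributions in Green's formula on $\Omega$ vanish), both of which are settled by the interior and boundary elliptic regularity of $w$ on $\overline{\CC}\setminus\{O\}$ that you invoke in Step~1. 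You also correctly identify that the proof in \cite{NP} proceeds via the Mellin transform and the meromorphic inverse of the operator pencil $\hat\Delta+\lambda(\lambda+1)$; your separation-of-variables argument is the direct, hands-on version of that machinery, available here because the operator is the Laplacian on a straight cone with no lower-order terms.
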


The following is a special case of \cite[Chapter 3, Lemma 5.11]{NP} with additional information in the case of a circular cone.
\begin{prop}\label{Special-solution-C} For $\kappa\in \N_0$,
consider the inhomogeneous problem $\Delta_{D/N} v=p_{\kappa}\in \mathbb{P}_{\kappa}$ on $\overline{\CC}$. There exists a special solution of the form
\be\label{special-solution-C}
v_{D/N}=q_{D/N, \kappa+2}\in \PP_{ \kappa+2}
\en
if $\lambda_{D/N,j}\neq \kappa+ 2$ for all $j\in \N$, and
\be\label{special-solution-2-C}
v=q_{D/N, \kappa+2}+\sum_m C_{D/N,m}\,r^{\kappa+ 2}\left\{ \ln r\, Y_{\kappa+ 2}^{m}(\hat{x})+\psi_{D/N,m}(\hat{x}) \right\}
\en
if $\kappa+ 2$ is a Dirichlet resp. Neumann eigenvalue. In (\ref{special-solution-2-C}), $C_{D/N,m}\in \C$,
$\psi_{D/N,m}\in C^\infty(\overline{\Omega})$ and the sum is taken over all $m\in \Z$ such that $|m|\leq \kappa$ and $P_{\kappa+ 2}^{|m|}(\cos\omega)=0$ in the Dirichlet case and $(P_{\kappa+ 2}^{|m|})'(\cos\omega)=0$ in the Neumann case.
\end{prop}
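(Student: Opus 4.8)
The statement for an arbitrary cone is contained in \cite[Chapter~3, Lemma~5.11]{NP}, so the plan is to recall what that abstract result delivers and then supply the extra structure available for the circular cone $\CC$. By the operator-pencil approach underlying the eigenvalue problem \eqref{eigenvalue}, a particular solution of $\Delta_{D/N}v=p_\kappa$ is produced by inverting the pencil at $\la=\kappa+2$: if $\kappa+2\notin\{\la_{D/N,j}\}$ the pencil is invertible there and \cite{NP} yields a solution homogeneous of degree $\kappa+2$, whereas if $\kappa+2$ coincides with one or more eigenvalues $\la_{D/N,j}$ the same lemma gives a solution of the shape $q_{\kappa+2}+\sum_j C_j\,r^{\kappa+2}\{\ln r\,V_{j,D/N}(\hat x)+\psi_j(\hat x)\}$, the logarithm corresponding to a Jordan chain of length one. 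It then remains to (a) identify the eigenfunctions $V_{j,D/N}$, (b) show that in the non-resonant case the solution may be taken to be a polynomial, and (c) cut the summation down to $|m|\le\kappa$.

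For (a) I would quote the Legendre description recorded above following \cite{DP}: because $\la=\kappa+2\in\N$, the relevant eigenfunctions are $P_{\kappa+2}^{|m|}(\cos\theta)\,e^{im\varphi}$ with $P_{\kappa+2}^{|m|}(\cos\omega)=0$ (Dirichlet) resp. $(P_{\kappa+2}^{|m|})'(\cos\omega)=0$ (Neumann), which up to normalization are the spherical harmonics $Y_{\kappa+2}^{m}$, and $r^{\kappa+2}Y_{\kappa+2}^{m}\in\PP_{\kappa+2}$ is a harmonic polynomial. For (b) I would use that $\Delta:\PP_{\kappa+2}\to\PP_\kappa$ is onto — the rank count $\dim\PP_{\kappa+2}-\dim(\PP_{\kappa+2}\cap\ker\Delta)=\dim\PP_\kappa$, the kernel being the $(2\kappa+5)$-dimensional space of harmonic polynomials — and that it commutes with rotations about the $x_3$-axis, hence restricts to a surjection from the polynomials in $\PP_{\kappa+2}$ carrying only $\varphi$-frequencies $|m|\le\kappa$ onto $\PP_\kappa$; fix such a polynomial particular solution $\tilde q$. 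On $\partial\CC=\{\theta=\omega\}$ the function $\tilde q$ equals $r^{\kappa+2}$ times a trigonometric polynomial of degree $\le\kappa$, so when $\kappa+2$ is not an eigenvalue — hence $P_{\kappa+2}^{|m|}(\cos\omega)\ne0$, resp. $(P_{\kappa+2}^{|m|})'(\cos\omega)\ne0$, for all $|m|\le\kappa$ — its Dirichlet (resp. Neumann) trace can be cancelled by adding $\sum_{|m|\le\kappa}c_m\,r^{\kappa+2}Y_{\kappa+2}^{m}$ with uniquely determined coefficients $c_m$, which produces $q_{D/N,\kappa+2}$.

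In the resonant case I would argue mode by mode. Writing $p_\kappa=r^\kappa\sum_{|m|\le\kappa}g_m(\theta)e^{im\varphi}$, every mode with $|m|\le\kappa$ and $P_{\kappa+2}^{|m|}(\cos\omega)\ne0$ (resp. derivative $\ne0$) is handled exactly as in the previous paragraph and merges into $q_{\kappa+2}$, while for $|m|>\kappa$ the mode equation is homogeneous and we take the trivial solution — which is precisely why no term with $|m|>\kappa$ appears. For a mode with $|m|\le\kappa$ and $P_{\kappa+2}^{|m|}(\cos\omega)=0$ (resp. derivative zero) I would use the elementary identity $\Delta\bigl(r^{\kappa+2}\ln r\,Y_{\kappa+2}^{m}(\hat x)\bigr)=(2\kappa+5)\,r^{\kappa}\,Y_{\kappa+2}^{m}(\hat x)$, which follows from $\hat\Delta Y_{\kappa+2}^{m}=-(\kappa+2)(\kappa+3)Y_{\kappa+2}^{m}$; choosing $C_{D/N,m}$ so as to absorb the component of $g_m$ along the one-dimensional eigenspace $\langle Y_{\kappa+2}^{m}\rangle$ (simple by Sturm--Liouville theory for the associated Legendre operator at frequency $m$) leaves a right-hand side orthogonal to that eigenspace, so the Fredholm alternative on the smooth spherical cap $\Omega=\s^2\cap\CC$, together with elliptic regularity up to $\partial\Omega$, supplies $\psi_{D/N,m}\in C^\infty(\overline\Omega)$ with the homogeneous boundary condition; what is left over is again polynomial and joins $q_{\kappa+2}$.

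The step I expect to require the most care is the index restriction $|m|\le\kappa$. Beyond the $\varphi$-separation already used, one must rule out the borderline frequencies $|m|=\kappa+1$ and $|m|=\kappa+2$ from forcing a term: from $P_{\kappa+2}^{\kappa+2}(\cos\omega)=\mathrm{const}\cdot\sin^{\kappa+2}\omega$ and $P_{\kappa+2}^{\kappa+1}(\cos\omega)=\mathrm{const}\cdot\cos\omega\,\sin^{\kappa+1}\omega$, together with $\omega\in(0,\pi)\backslash\{\pi/2\}$, no Dirichlet eigenfunction at $\la=\kappa+2$ has $|m|\ge\kappa+1$; for the Neumann problem $|m|=\kappa+1$ can be an eigenvalue at the exceptional angle $\cos^2\omega=1/(\kappa+2)$, but since $p_\kappa$ carries no $\varphi$-frequency $\kappa+1$ such a mode never enters the constructed solution — provided $\tilde q$ is chosen (as above) without that frequency and its boundary data are matched using only the harmonics with $|m|\le\kappa$. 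Keeping the entire construction confined to the band $|m|\le\kappa$ while still matching the inhomogeneous Dirichlet/Neumann data on $\partial\CC$ is the bookkeeping that has to be done carefully.
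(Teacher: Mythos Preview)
Your proposal is correct and follows essentially the same route as the paper's proof. Both arguments decompose $p_\kappa$ into angular modes, build an explicit polynomial particular solution, correct the boundary trace by harmonic polynomials $r^{\kappa+2}Y_{\kappa+2}^{m}$ in the non-resonant modes, and in the resonant modes use the log-ansatz together with the Fredholm alternative on the spherical cap $\Omega$ (your identity $\Delta(r^{\kappa+2}\ln r\,Y_{\kappa+2}^{m})=(2\kappa+5)r^\kappa Y_{\kappa+2}^{m}$ is exactly the coefficient $-(2\kappa+5)$ appearing in the paper's equation for $W$). The paper simply does all of this by reducing to a single term $p_\kappa=r^\kappa Y_n^{m}$ with $|m|\le n\le\kappa$ and writing down explicit formulas, whereas you phrase the same steps more abstractly (surjectivity of $\Delta:\PP_{\kappa+2}\to\PP_\kappa$ via a dimension count, rotation invariance to confine frequencies); your final paragraph on why $|m|\le\kappa$ suffices is more explicit than the paper, which gets this restriction automatically from $|m|\le n\le\kappa$.
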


\begin{proof}
Applying Proposition \ref{P7} (i), we may expand $p_{\kappa}\in \mathbb{P}_{\kappa}$ as
\ben
p_\kappa(r,\theta,\varphi)=r^\kappa \sum_{n,j \in \N_0: n+2j=\kappa}\; \sum_{m=-n}^n a_{n,m}^{(j)}\, Y_n^m(\theta,\varphi).
\enn
Hence, it suffices to prove the proposition for a term of the form
\be\label{Pkappa}
p_\kappa(x)=r^\kappa\; Y_n^{m}(\hat{x})\quad\mbox{for some}\quad 0\leq n\leq \kappa,\; |m|\leq n.
\en One can readily look for a polynomial $q_{\kappa+2}$ to the equation $\Delta q_{\kappa+2}=p_{\kappa}$ in the form
\be\label{qkappa}
q_{\kappa+2}(x)=\zeta\;r^{\kappa+2}\;Y_n^{m}(\hat{x}),\quad \zeta=\frac{1}{(\kappa+2)(\kappa+3)-n(n+1)}.
\en
We first consider the Dirichlet boundary value problem. In the case $\kappa+2\neq \lambda_{D,j}$ for all $j$, we have $P_{\kappa+2}^{|m|}(\cos\omega)\neq0$. Setting
\ben
q_{D,\kappa+2}(x):=q_{\kappa+2}(x)-q_{\kappa+2}(r,\omega,\varphi)P_{\kappa+2}^{|m|}(\cos\theta)/P_{\kappa+2}^{|m|}(\cos\omega),
\enn we obtain the requested polynomial solution. Now we assume that $\kappa+2$ is a Dirichlet eigenvalue of (\ref{eigenvalue}) with the associated eigenfunction $V=Y_{\kappa+2}^{m}$, which implies that $P_{\kappa+2}^{|m|}(\cos\omega)=0$.
 As in \cite[Chapter 3]{NP} we make the ansatz
\be\label{VD}
v_D(r, \hat{x})=c\, r^{\kappa+2}\ln r \;Y_{\kappa+2}^{m}(\hat{x})+r^{\kappa+2}\,W(\hat{x})
\en with an unknown constant $c\in \C$ and an unknown function $W$ to be determined from the Dirichlet boundary value problem
\be\label{DBVP}\begin{split}
\hat{\Delta}W+(\kappa+2)(\kappa+3)W&=-c(2\kappa+5)Y_{\kappa+2}^{m}+Y_{n}^{m}=:F\quad&&\mbox{in}\quad \Omega=\CC\cap \s^2,\\
W&=0\qquad&&\mbox{on}\quad \partial\Omega,
\end{split}\en where the number $n\in \N_0$ is the same as that in (\ref{Pkappa}).
Note that if $W$ solves the previous boundary value problem, then the solution $v_D$ of the form (\ref{VD})
must be a Dirichlet eigenfunction to (\ref{eigenvalue}).
The constant $c$ will be selected such that the right hand side $F$  is orthogonal to  $Y_{\kappa+2}^{m}$ in the $L^2(\Omega)$-sense, i.e.,
\ben
c=\frac{\int_\Omega Y_n^m \overline{Y^m_{\kappa+2}}\,ds}
{(2\kappa+5)\int_\Omega |Y^m_{\kappa+2}|^2\,ds  }.
\enn
Hence the problem (\ref{DBVP}) admits at least one solution by the Fredholm alternative.
Now we may rewrite $v_D$ in (\ref{VD}) as
\ben
v_D=q_{\kappa+2}+ C_{D,m}\,r^{\kappa+2}\left\{ \ln r Y_{\kappa+2}^{m}+\psi_{D,m} \right\},
\enn
where $q_{\kappa+2}=\zeta\,r^{\kappa+2}\,Y_n^{m}\in \PP_{\kappa+2}$ satisfies the equation $\Delta q_{\kappa+2}=p_\kappa$ (see (\ref{qkappa})) and
\ben
\psi_{D,m}=(W-\zeta Y_n^{m})/c, \quad C_{D,m}=c.
\enn
Hence we obtain the assertion for the Dirichlet boundary value problem with our special right hand side. The case of the Neumann boundary condition can be treated analogously.
\end{proof}

\subsection{Proof of Lemma \ref{lem} for circular cones}\label{sec:3-c}

 Recall that $B_1$ is the unit ball centered at the origin $O$
 and that $\CC \subset \R^3$ is an infinite circular cone with the angle $2\omega\in(0,2\pi)\backslash\{\pi\}$.
 Assume $q\in C^{\,l,s}(\overline{\CC \cap B_1})$ for some $l\in \N_0$, $s\in(0,1)$, satisfying $q\equiv1$ in $B_1\backslash\overline{\CC}$.
 Consider the coupling problem between the Helmholtz equations
\be\begin{split}\label{eq:1}
&\Delta u_1+ k^2 u_1=0,\qquad \Delta u_2+ k^2 q u_2=0 \quad \mbox{in}\quad B_1,\\
&\partial_\nu^{j}(u_1-u_2)=0\quad \mbox{on}\quad \partial \CC\cap B_1,\; j=1,2,\cdots,l+1,
\end{split}
\en
where $\partial_\nu^{j}$ denotes the normal derivative of order $j$ at $\partial \CC$ and $\nu$ is the unit normal pointing into the exterior of $\CC$.
The following lemma implies Lemma \ref{lem} in the case (iii) and the fact that a circular cone scatters each incident wave non-trivially. 
\begin{lem}\label{lem-cone}
Let $u_1,u_2\in H^2(B_1)$ be a solution pair to (\ref{eq:1}), and suppose
 that $q$ satisfies the assumption (a) near the vertex $O$ with $D:=\CC\cap B_1$.
Then we have $u_1=u_2\equiv0$ in $B_1$.
\end{lem}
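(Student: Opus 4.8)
The plan is to mimic, step by step, the argument used for the planar corner (Lemma \ref{lem:corner-2D}) but with the three-dimensional cone machinery (Propositions \ref{P2C}--\ref{Special-solution-C}) replacing the two-dimensional one. Write $u:=u_1-u_2$, $h:=1-q$; then $u\in C^{l+1,\alpha}(\overline{B}_1)\cap H^{l+2}(B_1)$ by Proposition \ref{P1}, and $u$ solves $\Delta u+k^2qu=k^2(1-q)u_1$ in $\CC\cap B_1$ with $\partial_\nu^j u=0$ on $\partial\CC\cap B_1$, $j=0,1,\dots,l+1$. Introduce a cut-off $\chi\in C_0^\infty(\overline{\CC})$ equal to $1$ near $O$ and supported in $B_1$, and set $v:=\chi\,\nabla_x^l u$ (extended by zero outside $\CC\cap B_1$). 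Then $v\in C^{1,\alpha}(\overline{\CC})\cap H^2(\CC)$ solves a Cauchy problem $\Delta v=f$ in $\CC$, $v=\partial_\nu v=0$ on $\partial\CC$, where $f=-k^2\chi\nabla_x^l(qu)+k^2\chi\nabla_x^l(hu_1)-[\Delta,\chi]\nabla_x^l u\in C^{0,\alpha}(\overline{\CC})\subset\Lambda_1^{0,\alpha}(\CC)$ for all $\alpha\le s$.

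The heart of the argument is an induction on $m\in\N$ showing that $f\in\Lambda_{1-m}^{2,\alpha}(\CC)$, $v\in\Lambda_{1-m}^{2,\alpha}(\CC)$, and $\nabla^j u_1(O)=0$ for all $j\le m-1$, for $\alpha>0$ small. The base case ($m=1$) is obtained exactly as in Steps 2--3 of the proof of Lemma \ref{lem:corner-2D}: since $v$ has compact support, vanishing Cauchy data and lies in $H^2$, we have $v\in V_0^2(\CC)$; the exclusion conditions in Propositions \ref{P2C}, \ref{P3C} hold because $\lambda_{D/N,j}\neq 1$ for the opening angle $2\omega\in(0,2\pi)\setminus\{\pi\}$ (here one uses $\omega\neq\pi/2$, which guarantees that $\lambda=1$ is not an eigenvalue of \eqref{eigenvalue}); hence $v$ is the unique $V_{0,D}^2\cap V_{0,N}^2$ solution, and Propositions \ref{P3C}--\ref{P4C} upgrade the weight so that $v\in\Lambda_{1,D}^{2,\alpha}\cap\Lambda_{1,N}^{2,\alpha}$. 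Then one splits $f=\chi p_0+f_0$ with $p_0=k^2\nabla^l h(O)\,u_1(O)\in\PP_0$ and $f_0\in\Lambda_0^{0,\alpha}$; applying Proposition \ref{Special-solution-C} with $\kappa=0$ and Proposition \ref{P4C} with $\gamma_1=0,\gamma=1$ to the Dirichlet and Neumann problems simultaneously, and comparing the two expansions of $v$ near $O$, forces the coefficients of the singular terms $r^{\lambda_{D/N,j}}V_{j,D/N}$ and $r^{2}\ln r\,Y_2^m$ to vanish, leaving a homogeneous biharmonic polynomial $q_2\in\PP_2$ with $q_2=\partial_\nu q_2=0$ on $\partial\CC$; by the appendix proposition (Proposition \ref{P5}/\ref{P6}) $q_2\equiv 0$, hence $p_0=0$, hence $u_1(O)=0$ and $v\in\Lambda_0^{2,\alpha}$. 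The inductive step $m\to m+1$ repeats this with $\kappa=m$: using $\nabla^j u_1(O)=0$ for $j<m$ one writes $f=\chi p_m+f_m$ with $p_m=k^2\nabla^l h(O)\,u_{1,m}$ harmonic (since $u_{1,m}$ is the lowest-order Taylor term of the harmonic function $u_1$, $\Delta u_{1,m}=0$ by Proposition \ref{P7}); the same comparison of Dirichlet/Neumann expansions plus Propositions \ref{Special-solution-C}, \ref{P6} gives $q_{m+2}\equiv 0$, whence $p_m=0$, $u_{1,m}\equiv 0$, i.e.\ $\nabla^m u_1(O)=0$. Finally $\nabla^j u_1(O)=0$ for all $j$ and the analyticity of $u_1$ give $u_1\equiv 0$ in $B_1$, and $u_2\equiv 0$ follows by unique continuation for $\Delta u_2+k^2qu_2=0$.

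The main obstacle, and the place where genuinely new work is needed compared with the 2D case, is the step of comparing the Dirichlet and Neumann expansions of $v$ near the conic point and concluding that all the singular coefficients $C_{D/N,m}$ (the $r^{\kappa+2}\ln r$ terms) and $d_{D/N,j}$ (the eigenfunction terms with $\lambda_{D/N,j}<\kappa+2$) vanish. In 2D one could invoke linear independence of $\sin(j\pi\theta/\omega)$ and $\cos(j\pi\theta/\omega)$ directly; in 3D one must instead use orthogonality of the spherical eigenfunctions $V_{j,D}$, $V_{j,N}$ on $\Omega=\CC\cap\s^2$, handle the possibility that a Dirichlet and a Neumann eigenvalue coincide (e.g.\ $\lambda=2$ when $P_2'(\cos\omega)=0$), and separate the $\ln r$ terms by matching powers of $r$ with the non-logarithmic $\Lambda_{1-m}^{2,\alpha}$ remainder which decays like $\mathcal O(r^{m+2+\alpha})$. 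This is exactly the "more complicated case of circular cones" that the authors defer to this section; I would carry it out by fixing a radius and integrating the difference of the two representations against each $V_{j,D/N}$ over $\Omega$, peeling off terms in order of increasing power of $r$ (and, at equal powers, separating $\ln r$ from non-$\ln r$ parts), to reach the conclusion that the lowest-order part of $v$ is a biharmonic homogeneous polynomial of degree $\kappa+2$ with vanishing Cauchy data on $\partial\CC$, which then vanishes identically.
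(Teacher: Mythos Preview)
Your proposal follows essentially the same approach as the paper's proof, and you have correctly identified the main new difficulty (comparing the Dirichlet and Neumann expansions on the spherical cap $\Omega=\CC\cap\s^2$) and the correct way to resolve it (peel off terms in increasing powers of $r$, separate $\ln r$ from non-$\ln r$ parts, and use orthogonality of the $V_{j,D}$ resp.\ $V_{j,N}$). The paper carries this out exactly as you sketch: subtract the two representations, multiply by $r^{-\lambda^*}$ with $\lambda^*$ the smallest remaining exponent, let $r\to0$, and use that $|\kappa|\neq|\kappa'|$ (Proposition \ref{P8}(ii)) to kill the logarithmic terms.

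There is one technical slip worth fixing. In three dimensions the correct Sobolev weight for the uniqueness step is $\beta=-1/2$, not $\beta=0$: Proposition \ref{P2C} requires $1/2-\beta\neq\lambda_{D/N,j}$, so with $\beta=-1/2$ the exclusion condition becomes $\lambda_{D/N,j}\neq 1$, which is what you (correctly) verify using $P_1^0(\cos\omega)=\cos\omega\neq0$, $P_1^1(\cos\omega)=\sin\omega\neq0$, $(P_1^1)'(\cos\omega)\neq0$ for $\omega\neq\pi/2$. With your stated $\beta=0$ the condition would instead be $\lambda_{D/N,j}\neq 1/2$, which you have not checked and which need not hold for all $\omega$. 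The membership $v\in V^2_{-1/2}(\CC)$ follows from the same estimate $r^{-2}|v|+r^{-1}|\nabla v|=\mathcal O(r^{\alpha-1})$ together with the 3D volume element $r^2\,dr\,d\hat x$. Finally, the biharmonic-polynomial step on the cone uses Proposition \ref{P6C}, the 3D analogue of Proposition \ref{P6}; your reference to ``Proposition \ref{P5}/\ref{P6}'' should point there.
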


\begin{proof} We shall proceed  following the lines in the proof of Lemma \ref{lem:corner-2D}. In order to
avoid repeating the arguments used in Section \ref{sec:3}, we only indicate the necessary changes for circular cones. For simplicity we shall carry out the proof for H\"older continuous potentials only, i.e.,
 under the assumption (a) with $l=0$. Hence, we have $q\in C^{0,s}(\overline{\CC \cap B_1})$ and $q(O)\neq 1$.
The general case of $l\geq 1$ can be treated analogously to the proof of Lemma \ref{lem:corner-2D}.

\textbf{Step 1.} Choosing an appropriate cut-off function $\chi\in C_0^\infty(\overline{\CC})$ and
setting $v:=\chi (u_1-u_2)$, we have by Proposition \ref{P1} that $v\in C^{1,\alpha}(\overline{\CC})\cap H^{2}(\CC)$ for all $\alpha\in[0,1)$.
 Further,
\be\label{eq:2c}\begin{split}
&\Delta v=-k^2\chi\,qu+k^2\chi\,hu_1-[\Delta,\chi]v=:f\quad\mbox{in}\quad \CC,\\
&v=\partial_\nu v=0 \quad\mbox{on}\quad \partial \CC,
\end{split}
\en with  $h=1-q$. Here the commutator $[\cdot,\cdot]$ is defined in the same way as in  Section \ref{sec:3}.
 Applying Proposition \ref{P2C} with $\beta=-1/2$ and using the vanishing of the Cauchy data on $\partial \CC$, it follows that $v$ is the unique solution of (\ref{eq:2c}) in $V^2_{-1/2}(\CC)$.
 Note that we have $\lambda_{D/N,j}\neq 1$ for all $j\in \N$, because
\ben
&&P_1^0(\cos\omega)=\cos\omega\neq 0,\\
&&P_1^1(\cos\omega)=-(P_1^0)'(\cos\omega)=\sin\omega\neq 0,\\
&&(P_1^1)'(\cos\omega)=-\cos\omega/\sin\omega\neq 0
 \enn
 for all $\omega\in(0,\pi)\backslash\{\pi/2\}$.
 On the other hand, by Propositions \ref{P3C} and \ref{P4C}, there exist unique solutions $v_{D/N}$ of the first equation in (\ref{eq:2c}) satisfying  $v_{D/N} \in \Lambda_{\beta,D/N}^{2,\alpha}(\CC)$ for all $\beta\geq 1$ sufficiently close to 1 and $\alpha>0$ sufficiently small. Note that $2+\alpha-\beta\neq \lambda_{D/N,j}$ for those $\alpha,\beta$ and all $j\in \N$. Since $v_{D/N} \in \Lambda_{1}^{2,\alpha}(\CC)\cap \Lambda_{\beta}^{2,\alpha}(\CC)$ for some $\beta>1$,
it is easy to check that
 $v_{D/N}\in V^2_{-1/2}(\CC)$.
 Hence $v=v_D=v_N \in \Lambda_{1,D}^{2,\alpha}(\CC)\cap \Lambda_{1,N}^{2,\alpha}(\CC)$.

\textbf{Step 2.} To show that  $f\in \Lambda_{0}^{\,0,\alpha}(\CC),  v\in \Lambda_{0}^{2,\alpha}(\CC)$ and $u_1(O)=0$, we
rewrite the right hand side $f\in\Lambda_{1}^{0,\alpha}(\CC)$ in the form
\be\label{eq:5C}
f=\chi p_0+f_0,\quad p_0:=k^2\,h(O) \,u_1(O)\in \PP_0,\quad f_0:=f-\chi p_0\in \Lambda_{0}^{0,\alpha}(\CC),
\en
and consider the boundary value problems $\Delta_{D/N} v_0=p_0$ on $\overline{\CC}$.
 Applying Proposition \ref{Special-solution-C} with $\kappa=0$ yields special solutions $v_{D/N,0}$  of the form
 \be\label{eq:3C}\begin{split}
 v_{D,0}(x)&=&q_{D,2}(x)+c_D\,r^{ 2}\left\{ \ln r\, Y_2^{0}(\hat{x})+\psi_{D,0}(\hat{x}) \right\},\\
  v_{N,0}(x)&=&q_{N,2}(x)+c_N\,r^{ 2}\left\{ \ln r\, Y_2^{0}(\hat{x})+\psi_{N,0}(\hat{x}) \right\},
  \end{split}
 \en
 where $q_{D/N,2}\in \PP_{ 2}$ satisfy $\Delta q_{D/N,2}=p_0$, $\psi_{D/N,0}\in C^\infty(\overline{\Omega})$,
   $c_{D}=0$ if $P_2(\cos\omega)\neq 0$ and
 $c_{N}=0$ if $P_2'(\cos\omega)\neq 0$.
 Set $w_{D/N,0}:=v-\chi\, v_{D/N,0}\in \Lambda_1^{ 2,\alpha}.$
It follows from (\ref{eq:5C}) that
\ben
\Delta w_{D/N,0}=f_0-[\Delta,\chi]v_{D/N,0}\in \Lambda_0^{0,\alpha}(\CC)\cap \Lambda_1^{0,\alpha}(\CC).
\enn
 Applying Proposition \ref{P4C} with $\gamma_1=0$, $\gamma=1$ and $\alpha>0$ sufficiently small, we get the representations
  \be\label{eq:4C}
 w_{D/N,0}= \chi\,\sum_j d_{D/N,j}\, r^{ \lambda_{D/N,j}} \,V_{j,D/N}(\hat{x})  +\tilde{w}_{D/N}
  \en
  with $d_{D/N,j}\in\C$, $\tilde{w}_{D/N}\in \Lambda_{D/N,0}^{ 2,\alpha}(\CC)$,
  where $(\lambda_{D/N,j},V_{j,D/N})$ is the eigensystem corresponding to (\ref{eigenvalue}) and the sum is taken over all $j\in \N$ such that $\lambda_{D/N,j}\in( 1+\alpha,  2]$.  Here the eigenvalues are counted with their multiplicities. Note that we may assume that there are no Dirichlet and Neumann eigenvalues of  (\ref{eigenvalue}) in the interval $(2,2+\alpha)$.
Combining (\ref{eq:3C}) with (\ref{eq:4C}) and recalling that $v$ solves both the Dirichlet and Neumann boundary value problems, we obtain the following expressions for $v$ as $r\rightarrow 0$:
\be\label{eq:21C}\begin{split}
v&=&\sum_j d_{D,j}\, r^{ \lambda_{D,j}} \,V_{j,D}(\hat{x})
+q_{D,2}+ c_D\,r^{ 2}\left\{ \ln r\, Y_2^{0}+\psi_{D,0}    \right\}+\mathcal{O}(r^{2+\alpha})\\
&=&\sum_j d_{N,j}\, r^{\lambda_{N,j}} \,V_{j,N}(\hat{x})
+q_{N,2}+c_N\,r^{ 2}\left\{ \ln r\, Y_2^{0}+\psi_{N,0}   \right\}+\mathcal{O}(r^{2+\alpha}),
\end{split}
\en
from which we get the relations (see Step 3 below for the proof in the general case)
\ben
 c_{D/N}=0, \qquad\quad
d_{D/N,j}=0 \quad\mbox{if} \quad\lambda_{D/N,j}< 2.
\enn
Equating the lowest order terms in (\ref{eq:21C}) as $r\rightarrow 0$
allows us to define $q_2\in \PP_{ 2}$ as
\ben
q_2:=q_{D,2}+r^2\sum_{j\in\N: \lambda_{D,j}=2} d_{D,j} \,V_{j,D}(\hat{x})
=q_{N,2}+
r^2\sum_{j\in\N: \lambda_{N,j}=2} d_{N,j} \,V_{j,N}(\hat{x}).
\enn Using $\Delta r^{\lambda_{D/N,j}} \,V_{j,D/N}=0$ and Proposition \ref{P4}, we get
\ben
\Delta q_{2}=\Delta q_{D,2}=\Delta q_{N,2}=p_0\in\PP_0,\qquad \Delta^2 q_2=0\qquad\mbox{in}\quad \CC.
\enn Moreover, $q_2$ has  vanishing Cauchy data
$q_{2}=\partial_{\nu}q_{ 2}=0$ on $\partial \CC$.
Applying Proposition \ref{P6C} in the Appendix,
we arrive at $q_{ 2}\equiv0$, so that $p_0=0$. This implies that $v\in\Lambda_0^{ 2,\alpha}(\CC)$. Finally, the relation $u_1(O)=0$ follows from
 the definition of $p_0$ in (\ref{eq:5C}) and the assumption $q(O)\neq 1$.

\textbf{Step 3.} Assume for some $m>1$, $m\in \N$ and $\alpha>0$ sufficiently small that
\be\label{eq:6C}
 f\in \Lambda_{1-m}^{\,0,\alpha}(\CC),\quad  v\in \Lambda_{1-m}^{\, 2,\alpha}(\CC), \, \nabla^j u_1(O)=0\quad\mbox{for all}\; j\in \N_0,\; j\leq m-1.
\en We want to show in this step that
\be\label{eq:7}
 f\in \Lambda_{-m}^{\,0,\alpha}(\CC),\quad  v\in \Lambda_{-m}^{\, 2,\alpha}(\CC), \quad \nabla^m u_1(O)=0.
\en
Again denote by $u_{1,m}\in \PP_m$ the homogeneous Taylor polynomial of degree $m$ of $u_1$ at $O$.
By the last relation in (\ref{eq:6C}), we have  $u_{1,j}\equiv 0$ for all $j\leq m-1$. Using Proposition \ref{P7} (iii) we get $\Delta u_{1,m}\equiv0$.

By (\ref{eq:6C}), the right hand side in (\ref{eq:2c}) can be split into
\ben
f=\chi p_m+f_m,\quad p_m:=k^2\, h(O)\,u_{1,m}\in \PP_{ m},\quad f_m:=f-\chi p_m\in \Lambda_{-m}^{\,0,\alpha}(\CC).
\enn
Repeating the arguments in Step 2, we find that near the conic point $O$ the function $v\in \Lambda^{2,\alpha}_{1-m,D}(\CC)$ takes the form
\be\label{eq:23C}\begin{split}
v=q_{D, m+2}+c_{D,\kappa}\,r^{m+2}\left\{ \ln r\, Y_{m+2}^{\kappa}+\psi_{D,\kappa} \right\}
+\sum_j\,d_{D,j}\, r^{\lambda_{D, j}} \,V_{j,D}(\hat{x})+\tilde{w}_D
\end{split}
\en
 as a solution to the Dirichlet boundary value problem, whereas
 $v\in \Lambda^{2,\alpha}_{1-m,N}(\CC)$ can be expressed as
\be\label{eq:24C}\begin{split}
v=q_{N, m+2}+c_{N,\kappa'}\,r^{m+2}\left\{ \ln r Y_{m+2}^{\kappa'}+\psi_{N,\kappa'}\right\}
+\sum_j\,d_{N,j}\, r^{\lambda_{N, j}} \,V_{j,N}(\hat{x})+\tilde{w}_N
\end{split}
\en
 as a solution to the Neumann boundary value problem.
The parameters and functions involved in (\ref{eq:23C}) and (\ref{eq:24C}) are described as follows:
\begin{description}
\item[(i)] $\tilde{w}_{D/N}\in \Lambda^{2,\alpha}_{-m,D/N}(\CC)$, $\psi_{D,\kappa}, \psi_{N,\kappa'}\in C^\infty(\overline{\Omega})$. Hence $\tilde{w}_{D/N}=\mathcal{O}(r^{m+2+\alpha})$ as $r\rightarrow 0$.
\item[(ii)] The integers $\kappa$ and $\kappa'$ satisfy $|\kappa|, |\kappa'|\leq m$ and $P_{m+2}^{|\kappa|}(\cos\omega)= (P_{m+2}^{|\kappa'|})'(\cos\omega)=0$. Further, it holds that $|\kappa'|\neq |\kappa|$, since $P_{m+2}^{n}(\cos\omega)$ and $(P_{m+2}^{n})'(\cos\omega)$ cannot vanish simultaneously for $0\leq n\leq m+2$; see Proposition \ref{P8} (ii).
\item[(iii)] $c_{D/N}$, $d_{D/N,j}\in \C$. Moreover, $c_{D}=0$ if $P_{m+2}^{|\kappa|}(\cos\omega)\neq 0$ for all $|\kappa|\leq m$, while $c_{N}=0$ if $(P_{m+2}^{|\kappa'|})'(\cos\omega)\neq 0$ for all $|\kappa'|\leq m$.
  \item[(iv)]  The sums in (\ref{eq:23C}) and (\ref{eq:24C}) are taken over all $j\in \N$ such that the eigenvalues (counted with their multiplicities) fulfill $\lambda_{D/N,j}\in(m+1+\alpha, m+2]$.
   \item[(v)]  $q_{m+ 2,D/N}\in \PP_{m+2}$ satisfies  $\Delta q_{m+ 2,D/N}=p_m\in \PP_m$.
\end{description}
We first claim that
\be\label{eq:31}
d_{D/N,j}=0\quad\mbox{if}\quad \lambda_{D/N, j}<m+2.
\en
For this purpose we denote by $\lambda^*=\min_j\{\lambda_{D, j},\lambda_{N, j}\}$ the smallest exponent of $r$ on the right hand sides of (\ref{eq:23C}) and (\ref{eq:24C}).
Supposing on the contrary that (\ref{eq:31}) does not hold, we then have
 $\lambda^*<m+2$. Subtracting (\ref{eq:23C}) from (\ref{eq:24C}), multiplying $r^{-\lambda^*}$ to the resulting expression and letting $r\rightarrow0$, we arrive at $d_{D/N,j}=0$ for $\lambda_{D/N, j}=\lambda^*$ due to the orthogonality of the eigenfunctions $V_{j,D}$ and $V_{j,N}$. Repeating this process yields (\ref{eq:31}).

The relation (\ref{eq:31}) implies that $\lambda^*= m+2$. We now  multiply $(r^{m+2}\ln r)^{-1}$ to both equalities (\ref{eq:23C}) and (\ref{eq:24C}) and consider the difference of the resulting expressions to obtain $c_{D,\kappa}=c_{N,\kappa'}=0$, where we have used the linear independence of $P_{m+2}^{|\kappa|}$ and $P_{m+2}^{|\kappa'|}$ for $|\kappa|\neq |\kappa'|$.
Hence,
the lowest order term $q_{m+2}$ of $v$ near $O$ belongs to $\PP_{ m+2}$ and takes the form
\ben
q_{m+2}&=&q_{D, m+2}+\sum_{j:\lambda_{D, j}=m+2}\,d_{D,j}\, r^{m+2} \,V_{j,D}\\
&=&q_{N, m+2}+\sum_{j:\lambda_{N, j}=m+2}\,d_{N,j}\, r^{m+2} \,V_{j,N}.
\enn
This further yields
\ben
&\Delta q_{ m+2}=p_{m}\in\PP_{ m},\quad \Delta^2q_{m+2}=\Delta p_m=0\quad\mbox{in}\quad \CC\, \\
&q_{ m+2}=\partial_{\nu}q_{ m+2}=0\quad\mbox{on}\quad \partial \CC.
\enn
Again using Proposition \ref{P6C} in the Appendix, we get $q_{ m+2}\equiv0$. Consequently,  $p_{m}\equiv0$ and $u_{1,m}\equiv 0$, which implies the relations in (\ref{eq:7}).

\textbf{Step 4.} Having proved that $\nabla^j u_1(O)=0$ for all $j\in \N_0$ in the previous steps, we obtain $u_1\equiv0$ in $B_1$ due to the analyticity. Finally, the vanishing of $u_2$ follows from the unique continuation for the Helmholtz equation. This finishes the proof of Lemma \ref{lem-cone}.

\end{proof}

\section{Appendix}
\renewcommand\thesection{A}

 In the Appendix, we prove several propositions that are used in Sections \ref{sec:2D}-\ref{sec: curvilinear}.
    In particular, Propositions \ref{P5} and \ref{P7} below extend the results of \cite{BLS}. We present an alternative method
  of proof relying on the expansion of  real-analytic
   solutions, which is of independent interest.

\begin{prop}\label{P5} Suppose that $(\Delta+k^2)u=0$ in a neighbourhood of the point $O\in \R^2$. Then the two lowest order terms in the Taylor expansion of $u$ at $O$ are both harmonic functions.
\end{prop}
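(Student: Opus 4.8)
The plan is to exploit the real-analyticity of $u$ near $O$ together with the Helmholtz equation to control the lowest-order terms of its Taylor expansion. Write $u=\sum_{j\ge j_0} u_j$ in a neighbourhood of $O$, where $u_j\in\PP_j$ is the homogeneous part of degree $j$ and $u_{j_0}\not\equiv 0$. The series converges locally uniformly, and since $\Delta u_j\in\PP_{j-2}$ while $k^2u_j\in\PP_j$, substituting the expansion into $(\Delta+k^2)u=0$ and collecting terms of like homogeneity yields the recursion $\Delta u_{j+2}=-k^2 u_j$ for all $j\ge j_0$, together with $\Delta u_{j_0}=0$ and $\Delta u_{j_0+1}=0$ (the two lowest homogeneities produce no $k^2$ contribution because there are no terms of degree $j_0-2$ or $j_0-1$). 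This already gives the statement: $u_{j_0}$ and $u_{j_0+1}$ are harmonic.

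The one point that needs care is the bookkeeping of which degrees appear. First I would fix notation: let $j_0$ be the order of vanishing of $u$ at $O$, so $u_j\equiv 0$ for $j<j_0$. Expanding $u=\sum_{j\ge j_0}u_j$ and using that $\Delta$ maps $\PP_m$ into $\PP_{m-2}$, the homogeneous component of degree $m$ in $(\Delta+k^2)u$ is $\Delta u_{m+2}+k^2 u_m$. Setting this to zero for every $m\ge j_0-2$: for $m=j_0-2$ we get $\Delta u_{j_0}=0$ (since $u_{j_0-2}\equiv 0$), and for $m=j_0-1$ we get $\Delta u_{j_0+1}=0$ (since $u_{j_0-1}\equiv 0$). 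Hence the two lowest-order terms $u_{j_0}$ and $u_{j_0+1}$ in the Taylor expansion of $u$ at $O$ are harmonic, as claimed. If one prefers the statement literally in terms of the ``first two'' terms counted from degree zero, one notes that all terms below degree $j_0$ vanish identically and are therefore trivially harmonic, so in every case the first two nonzero homogeneous terms are harmonic.

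The only genuine prerequisite is the real-analyticity of solutions of $(\Delta+k^2)u=0$, which is classical (the operator has analytic coefficients, so by analytic hypoellipticity $u$ is analytic; equivalently one may use interior elliptic estimates to justify term-by-term differentiation of the Taylor series). Given this, the argument is a short formal computation with homogeneous polynomials and there is no real obstacle; the main thing to get right is simply that the two bottom degrees of the expansion do not receive a $k^2 u$ contribution, which is exactly what forces them to be harmonic. The same reasoning, applied in $\R^3$ with the spherical-harmonic decomposition, underlies Proposition \ref{P7}.
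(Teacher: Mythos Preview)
Your argument is correct. The decomposition $u=\sum_{j\ge j_0} u_j$ with $u_j\in\PP_j$, together with the observation that $\Delta$ maps $\PP_j$ into $\PP_{j-2}$, immediately gives the recursion $\Delta u_{m+2}+k^2 u_m=0$ upon matching homogeneities, and hence $\Delta u_{j_0}=\Delta u_{j_0+1}=0$ since $u_{j_0-2}=u_{j_0-1}=0$. The appeal to analyticity to justify the expansion is the right ingredient.

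The paper takes a different and somewhat heavier route: it writes $u$ in polar coordinates as $u=\sum_{j\ge M} r^j F_j(\theta)$ with $F_j$ explicitly expressed in the basis $\{\cos n\theta,\sin n\theta\}$, derives closed-form recurrence relations for the coefficients $c_{n,m}^{\pm}$, and then treats the cases $M=0$, $M=1$, $M\ge 2$ separately to identify the surviving coefficients in $F_M$ and $F_{M+1}$. Your proof bypasses all of this by working abstractly with homogeneous components; it is shorter and avoids the case distinction. What the paper's approach buys is an explicit description of every term in the expansion (not just the bottom two), which is in the spirit of the parallel spherical-harmonic expansion used in Proposition~\ref{P7}; but for the bare statement of Proposition~\ref{P5} your argument is the more economical one.
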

\begin{proof} Suppose that the lowest degree in the Taylor expansion of $u_1$ at $O$ is $M\in \N_0$ and that all terms of order less than $M$ vanish. Then the function $u_1=u_1(r,\theta)$ can be expanded into the convergent series (see, e.g., \cite[Lemma 2.2]{EHY2014})
\be\label{expan-2D}
u=\sum_{j\in\N_0,j\geq M}r^j F_j(\theta),\quad F_j(\theta)=\sum_{n,m\in \N_0,n+2m=j} \left(c^+_{n,m}\cos n\theta+c^-_{n,m}\sin n\theta\right),
\en
where $c^\pm_{n,m}\in\C$ satisfy the recurrence relations:
\ben
c^\pm_{n,m+1}=-\frac{k^2}{4(m+1)(n+m+1)}\,c^\pm_{n,m}\quad\mbox{for all}\quad n,m\in \N_0.
\enn In particular, the coefficients of the first three terms in the expansion are given by
\ben
&&F_0(\theta)=c_{0,0}^+,\\
&& F_1(\theta)=c_{1,0}^+\cos\theta+c_{1,0}^-\sin\theta,\\
&& F_2(\theta)=c_{0,1}^++c_{2,0}^+\cos2\theta+c_{2,0}^-\sin2\theta,\quad c_{0,1}^+=-c_{0,0}^+\,k^2/4.
\enn Hence, if $M=0$, it is obvious that both $F_0$ and $rF_1$ are harmonic. If $M=1$, we have $c_{0,0}^+=0$ and both $rF_1$ and $r^2F_2$ are harmonic functions. Now assume that $M\geq 2$. It then holds that $c^\pm_{n,m}=0$ for all $n+2m\leq M-1$.
For $n,m\in \N_0$, $m\geq 1$ such that $n+2m=M$, it follows from the recurrence relations that
$$c^\pm_{n,m}=-\frac{k^2}{4m(n+m)}c^\pm_{n,m-1}=0,$$
since $n+2(m-1)=M-2$. This implies that the lowest order term, given by
$$F_M=c_{M,0}^+\cos M\theta+ c_{M,0}^-\sin M\theta,$$
is harmonic. Analogously, one can prove that $r^{M+1}F_{M+1}(\theta)$ is also harmonic.
\end{proof}
Next we prove the result corresponding to Proposition \ref{P5} in 3D.
\begin{prop}\label{P7}
\begin{description}
\item[(i)] A real-analytic function $u=u(r,\theta,\varphi)$ can be expanded in a neighbourhood of the origin as follows:
\be\label{eq}
u(x)=\sum_{n,l\in \N_0} r^{n+2\,l}\,\sum_{m=-n}^n\,a_{n,m}^{(l)}\;Y_{n}^m(\theta,\varphi),\quad a_{n,m}^{(l)}\in \C.
\en
\item[(ii)] A solution to the Helmholtz equation $(\Delta+k^2)u=0$ can be expanded in the form (\ref{eq})
 where the coefficients $a_{n,m}^{(l)}$ fulfill the recurrence relations
    \ben
    a_{n,m}^{(l+1)}=-\frac{k^2}{2(l+1)(2l+2n+3)}\,a_{n,m}^{(l)},\quad n,l\in\N_0,\;m=-n,-n+1,\cdots,n-1,n.
    \enn
\item[(iii)] Suppose that $(\Delta+k^2)u=0$ in $\R^3$.
Then the two lowest order terms in the Taylor expansion of $u$ at $O\in \R^3$
are both harmonic functions in $\R^3$.
\end{description}
\end{prop}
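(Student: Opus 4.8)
The plan is to reduce all three parts to the classical decomposition of homogeneous polynomials into solid harmonics, mirroring the two-dimensional argument of Proposition~\ref{P5}. For part~(i), I would start from the fact that a real-analytic $u$ has, in some ball $B_\rho$ about $O$, a convergent Taylor series $u(x)=\sum_{\kappa\in\N_0}p_\kappa(x)$ with $p_\kappa\in\PP_\kappa$. I would then invoke the standard fact (see e.g.\ \cite{NP}) that each $p_\kappa$ decomposes uniquely as $p_\kappa=\sum_{0\le l\le\kappa/2}r^{2l}\,h_{\kappa-2l}$, where $h_{\kappa-2l}$ is a harmonic homogeneous polynomial of degree $\kappa-2l$, and that any harmonic homogeneous polynomial $h_n$ of degree $n$ restricts on $\s^2$ to a spherical harmonic of order $n$, i.e.\ $h_n=r^n\sum_{m=-n}^n a_{n,m}\,Y_n^m(\theta,\varphi)$. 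Using $r^{2l}\cdot r^n Y_n^m=r^{n+2l}Y_n^m$, relabelling the coefficients as $a_{n,m}^{(l)}$, and summing over $\kappa$ then produces the expansion~(\ref{eq}); its convergence in $B_\rho$ is inherited from that of the Cartesian Taylor series, because on each fixed degree $\kappa$ the passage from the monomial basis of $\PP_\kappa$ to the solid-harmonic basis is a finite change of basis.

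For part~(ii), with $u$ a solution of $(\Delta+k^2)u=0$ I would differentiate the series of~(i) termwise — legitimate in $B_\rho$, where it converges together with all its derivatives — using the expression for $\Delta$ in spherical coordinates stated above and $\hat{\Delta}Y_n^m=-n(n+1)Y_n^m$, which gives $\Delta(r^s Y_n^m)=[\,s(s+1)-n(n+1)\,]\,r^{s-2}Y_n^m$. With $s=n+2l$ one has $s(s+1)-n(n+1)=2l(2n+2l+1)$, so after re-indexing $l\mapsto l+1$ (the $l=0$ term dropping out) $\Delta u=\sum_{n,l\in\N_0}\sum_{m=-n}^n 2(l+1)(2n+2l+3)\,a_{n,m}^{(l+1)}\,r^{n+2l}Y_n^m$. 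Substituting this and $k^2 u$ into the Helmholtz equation, grouping by the total degree $n+2l$, and using that the functions $\{\,r^{n+2l}Y_n^m\,\}$ occurring at any fixed degree are linearly independent (there the indices $n$ are pairwise distinct, and the $Y_n^m$ are orthonormal on $\s^2$), I would read off $2(l+1)(2n+2l+3)\,a_{n,m}^{(l+1)}+k^2\,a_{n,m}^{(l)}=0$, which is the asserted recurrence.

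Part~(iii) then follows at once. Assuming $u\not\equiv0$ near $O$ (the claim being trivial otherwise), let $M$ be the least degree with $\nabla^M u(O)\neq 0$, so that $a_{n,m}^{(l)}=0$ whenever $n+2l<M$. If $n+2l=\kappa\in\{M,M+1\}$ and $l\ge 1$, the recurrence of~(ii) expresses $a_{n,m}^{(l)}$ as a nonzero multiple of $a_{n,m}^{(l-1)}$, whose index obeys $n+2(l-1)=\kappa-2\le M-1<M$, hence $a_{n,m}^{(l-1)}=0$ and therefore $a_{n,m}^{(l)}=0$. Thus in degrees $M$ and $M+1$ only the terms with $l=0$ survive, so by~(i) the homogeneous Taylor parts $r^M\sum_m a_{M,m}^{(0)}Y_M^m$ and $r^{M+1}\sum_m a_{M+1,m}^{(0)}Y_{M+1}^m$ of $u$ are harmonic homogeneous polynomials. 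The only delicate points — and hence the main obstacle — are the bookkeeping ones: verifying the algebraic identity $s(s+1)-n(n+1)=2l(2n+2l+1)$, justifying the termwise differentiation and the re-indexing, and making precise the linear-independence step that separates the coefficients. None of these is deep, and the whole argument is the faithful spherical-harmonic analogue of the proof of Proposition~\ref{P5}.
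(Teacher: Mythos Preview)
Your proposal is correct and follows essentially the same route as the paper: the decomposition $\PP_\kappa=\sum_l r^{2l}\mathbb{H}_{\kappa-2l}$ together with the spherical-harmonic expansion of $\mathbb{H}_n$ for part~(i), the termwise computation of $\Delta(r^{n+2l}Y_n^m)$ for part~(ii), and the use of the recurrence to kill all $l\ge 1$ coefficients in the two lowest nonvanishing degrees for part~(iii). The paper is terser in (iii), merely referring back to the proof of Proposition~\ref{P5}, but your explicit version is exactly that argument spelled out.
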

\begin{proof}
(i) Recall that $\mathbb{P}_n$ denotes the collection of all homogeneous polynomials of degree $n\in\N_0$. We denote  by $\mathbb{H}_n$
the subset of $\mathbb{P}_n$ consisting of harmonic homogeneous polynomials of degree $n$. Then, for any $H_n\in\mathbb{H}_n$ there holds the expansion
\be\label{Eq:1}
H_n(x)= r^n \sum_{m=-n}^n\,c_{n,m}\; Y_n^m(\theta,\varphi),\quad c_{n, m}\in \C.
\en Since $\mathbb{P}_n=\mathbb{H}_n+|x|^2\; \mathbb{P}_{n-2}$, we obtain by induction that any $p_n\in \mathbb{P}_n$ can be written in the form
\be\label{eq:10}
p_n(x)=\sum_{l\in\N_0, n-2l\geq 0} b_l\; |x|^{2l}\,H_{n-2l}(x),\quad b_l\in\C,\;H_{n-2l}\in \mathbb{H}_{n-2l}.
\en
Since $u$ is real-analytic, applying the Taylor expansion and using (\ref{eq:10}) yields
\ben
u(x)=\sum_{n\in\N_0} c_n\,p_n(x)=\sum_{n\in\N_0} c_n\,\sum_{l\in\N_0, n-2l\geq 0} b_l\, |x|^{2l} \,H_{n-2l}(x).
\enn
Rearranging the terms in the previous expression, we get
\ben
u(x)=\sum_{l\in\N_0} |x|^{2l}\,\sum_{n\in \N_0} a_{n}^{(l)}\,H_{n}(x),\quad a_{n}^{(l)}\in \C.
\enn
which together with (\ref{Eq:1}) proves the first assertion.

(ii) The second assertion follows from the relation
\be\label{eq:32}\begin{split}
\Delta u&=& \sum_{n\in \N_0,l\in\N} 2l\,(2l+2n+1) r^{n+2\,l-2}\,\sum_{m=-n}^n\,a_{n,m}^{(l)}\;Y_{n}^m(\theta,\varphi)\\
&=&\sum_{n,l\in \N_0} 2(l+1)(2l+2n+3) r^{n+2\,l}\,\sum_{m=-n}^n\,a_{n,m}^{(l+1)}\;Y_{n}^m(\theta,\varphi).
\end{split}
\en
(iii) To prove the third assertion, we rewrite the expansion (\ref{eq}) as
\ben
u(x)=\sum_{j\in\N_0}r^j\,F_j(\theta,\varphi),\quad
F_j(\theta,\varphi):=\sum_{n,l\in\N_0, n+2l=j}\;\sum_{m=-n}^n\,a_{n,m}^{(l)}\;Y_{n}^m(\theta,\varphi).
\enn
Proceeding in the same way as in Proposition \ref{P5}, one can verify  the third assertion.
\end{proof}

In \cite{BLS},  Propositions \ref{P5} and \ref{P7} are verified for the lowest order term of solutions to the
  Helmholtz equation only. Proposition \ref{P6} below implies the absence of  non-trivial biharmonic functions with vanishing Dirichlet and Neumann data
 on the boundary of a sector in $\R^2$.

\begin{prop}\label{P6}
Let $\K=\K_\omega\subset\R^2$ be the sector defined in Section \ref{Results} with the opening angle $\omega\in(0,2\pi)\backslash\{\pi\}$. Suppose that
 $u\in H^2(B_1)$ solves the boundary value problem
\be\label{equation}
\Delta^2 u=0\quad\mbox{in}\quad \K,\qquad u=\partial_{\nu}u=0\quad\mbox{on}\quad\partial \K\cap B_1.
\en Then $u\equiv 0$.
\end{prop}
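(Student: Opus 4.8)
The plan is to read off the homogeneity of $u$ at the vertex $O$ and to show that the exponents $r^{\lambda}$ that could possibly occur are ruled out by the restriction $\omega\in(0,2\pi)\backslash\{\pi\}$. By interior elliptic regularity $u$ is real‑analytic inside $\K$ and smooth up to the open rays $\theta=0$ and $\theta=\omega$; multiplying by a cut‑off $\chi$ with $\chi\equiv1$ near $O$ gives $\Delta^{2}(\chi u)=[\Delta^{2},\chi]u$ near $O$, with right‑hand side supported away from $O$ and with the same vanishing Cauchy data on $\partial\K$, and the weighted‑space framework of Section \ref{Regularity} (applied to the operator pencil generated by $\Delta^{2}$ on $\K$ with the clamped conditions) then reduces matters to the eigenvalues $\lambda$, necessarily with $\mathrm{Re}\,\lambda>1$ because $u\in H^{2}$, of that pencil. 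In the only situation where Proposition \ref{P6} is actually invoked --- namely $u=q_{m+2}\in\PP_{m+2}$ with $m\in\N$ --- this reduction is trivial, since $u=r^{m+2}\phi(\theta)$ already, with $\phi$ satisfying the four boundary conditions used below.

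Substituting $u=r^{\lambda}\phi(\theta)$ into $\Delta^{2}u=0$ yields
\[
\phi^{(4)}+\big(\lambda^{2}+(\lambda-2)^{2}\big)\phi''+\lambda^{2}(\lambda-2)^{2}\phi=0 ,
\]
whose characteristic polynomial factors as $(\mu^{2}+\lambda^{2})\big(\mu^{2}+(\lambda-2)^{2}\big)$, so away from the resonances $\lambda\in\{0,1,2\}$ one has $\phi(\theta)=A\cos\lambda\theta+B\sin\lambda\theta+C\cos(\lambda-2)\theta+D\sin(\lambda-2)\theta$. Imposing $\phi(0)=\phi'(0)=\phi(\omega)=\phi'(\omega)=0$ gives a homogeneous $4\times4$ system; eliminating $C=-A$ and $D=-\lambda B/(\lambda-2)$ and expanding the remaining $2\times2$ determinant, using $2\sin^{2}\omega=1-\cos2\omega$ and $\cos p\omega\cos q\omega\mp\sin p\omega\sin q\omega=\cos((p\pm q)\omega)$ with $p=\lambda$, $q=\lambda-2$, collapses the solvability condition to the single transcendental equation
\[
\sin^{2}\!\big((\lambda-1)\omega\big)=(\lambda-1)^{2}\sin^{2}\omega .
\]

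It remains to exclude the admissible roots. For an integer exponent $\lambda=n\geq3$ (which is the case that matters, via $n=m+2$) write $\sin((n-1)\omega)=\sin\omega\,U_{n-2}(\cos\omega)$ with $U_{n-2}$ the Chebyshev polynomial of the second kind; since $|U_{k}(t)|\leq k+1$ on $[-1,1]$ with equality only at $t=\pm1$, and $\cos\omega=\pm1$ is excluded because $\omega\notin\pi\Z$, this forces $|\sin((n-1)\omega)|<(n-1)|\sin\omega|$ with $\sin\omega\neq0$, contradicting the transcendental equation. The small degrees are even easier and avoid the resonances altogether: a homogeneous biharmonic polynomial of degree $n\leq3$ that vanishes to second order on both rays is divisible by $y^{2}$ and by $(-x\sin\omega+y\cos\omega)^{2}$, two coprime squares of total degree $4$, hence vanishes identically. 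Collecting the cases, no admissible homogeneous term can survive, so $u\equiv0$ (in the polynomial case $u$ is its own leading term, so the conclusion is immediate). The main obstacle is precisely the determinant computation leading to $\sin^{2}((\lambda-1)\omega)=(\lambda-1)^{2}\sin^{2}\omega$ and the bookkeeping of the resonant exponents $\lambda\in\{0,1,2\}$; once these are handled, the Chebyshev inequality finishes the argument.
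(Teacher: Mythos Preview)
Your argument is correct for the polynomial case $u=q_{m+2}\in\PP_{m+2}$, which is the only case the paper actually invokes, and it follows a genuinely different route from the paper's own proof. You work in polar coordinates, pass to the angular ODE for $\phi$, and reduce everything to the classical transcendental equation $\sin^{2}((\lambda-1)\omega)=(\lambda-1)^{2}\sin^{2}\omega$ for the clamped biharmonic pencil on a wedge; the Chebyshev bound $|U_{k}(t)|\le k+1$ with strict inequality for $|t|<1$ then excludes all integer exponents $n\ge 3$ (since $\omega\notin\{0,\pi,2\pi\}$ forces $|\cos\omega|<1$), and a divisibility count handles the low degrees. The paper instead stays in Cartesian coordinates and runs an induction on the order $m$: from the vanishing Cauchy data on the two rays it extracts enough tangential identities at $O$ to force $\nabla^{m}u(O)=0$, the key step being a $4\times4$ linear-algebra computation (with a unimodular parameter $\zeta=-c/\bar c$ encoding $\omega$) showing that $\partial_{\tau_1}^{m+1},\,\partial_{\tau_1}^{m}\partial_{\tau_2},\,\partial_{\tau_1}\partial_{\tau_2}^{m},\,\partial_{\tau_2}^{m+1}$ together with $\nabla_{\tau}^{m-3}\Delta^{2}$ span $\nabla_{\tau}^{m+1}$. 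Your approach is more conceptual and makes the obstruction at $\omega=\pi$ transparent via $|\cos\omega|<1$, at the price of quoting the spectral picture for $\Delta^{2}$ on a sector; the paper's approach is entirely elementary and self-contained. One caveat: the reduction you sketch for the full $H^{2}(B_{1})$ statement (cut-off plus weighted-space expansion for $\Delta^{2}$) appeals to machinery not set up in Section~\ref{Regularity}, which treats only the Laplacian; since you immediately restrict to homogeneous polynomials, where $u=r^{n}\phi(\theta)$ is exact, this does not affect the argument you actually carry out.
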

In \cite{KS}, Proposition \ref{P6} was proved for a homogeneous polynomial $p_l$
 such that $\Delta p_l$ is harmonic. Our proof differs from that in \cite{KS}. It is also elementary,
 since simple calculations
 using Cartesian coordinates are involved only. Alternatively, Proposition \ref{P6}  also follows from the expansion (\ref{expan-2D}) under polar coordinates; we refer to the proof of Proposition \ref{P6C} below where the spherical coordinates are employed to prove the analogue of Proposition \ref{P6} for circular cones in 3D.
\begin{proof}
Denote by $\tau_j$ and $\nu_j$ ($j=1,2$) the unit tangential and normal vectors on the two half-lines of $\partial \K$ starting at the corner $O$. Since the opening angle of $\K$ is not $\pi$, the tangential and normal vectors are linearly independent.  Without loss of generality we suppose that $\nu_1=c_1\tau_1+c_2\tau_2$ with $c_1,c_2\in\R$, $c_2\neq 0$. Hence,
\be\label{Derivative}
\partial_{\tau_2}=\frac{1}{c_2}\partial_{\nu_1}-\frac{c_1}{c_2}\partial_{\tau_1}.
\en
We shall prove by induction  that $\nabla^m u(O)=0$ for all $m\in\N_0$, which implies the proposition.

From the Dirichlet and Neumann boundary conditions of $u$ on $\partial \K$ we see that
\be\label{D-12}
u=\nabla u=0,\quad
\partial_{\tau_1}^2u=\partial_{\tau_2}^2u=\partial_{\nu_1}\partial_{\tau_1}u=0\quad\mbox{at the corner}\; O.
\en
Combining (\ref{Derivative}) and (\ref{D-12}) gives the relation
$\partial_{\tau_1}\partial_{\tau_2}u=0$ at $O$.
Since each entry of the vector $\nabla^2$ can be expanded as a linear combination of $\partial_{\tau_1}^2$, $\partial^2_{\tau_2}$ and $\partial_{\tau_1}\partial_{\tau_2}$, we obtain $\nabla^2u=0$ at $O$.

To prove that $\nabla^3u(O)=0$, we observe that
\ben
\partial_{\tau_1}^3u=\partial_{\tau_2}^3u=
\partial_{\tau_1}^2\partial_{\nu_1}u=
\partial_{\tau_2}^2\partial_{\nu_2}u=0 \quad\mbox{at}\; O.
\enn
Applying $\partial_{\tau_1}^2$ to both sides of (\ref{D-12}) yields $\partial_{\tau_1}^2\partial_{\tau_2}u(O)=0$. Analogously we can get $\partial_{\tau_2}^2\partial_{\tau_1}u(O)=0$.
Hence, the relation $\nabla^3u(O)=0$ follows from the fact that the differential operators $\partial_{\tau_1}^3, \partial_{\tau_1}^2\partial_{\tau_2}, \partial_{\tau_1}\partial_{\tau_2}^2$ and $\partial_{\tau_2}^3$ span the vector $\nabla^3$.

Now we want to verify that $\nabla^4u(O)=0$. Arguing as in the previous step we get
\be\label{eq:8}
\partial_{\tau_1}^4u=\partial_{\tau_1}^3\partial_{\tau_2}u=\partial_{\tau_1}\partial_{\tau_2}^3u=
\partial_{\tau_2}^4u=0\quad\mbox{at}\; O.
\en Hence it suffices to prove $\partial_{\tau_1}^2\partial_{\tau_2}^2u(O)=0$. Using (\ref{eq:8}), $\partial_{\nu_1}=c_1\partial_{\tau_1}+c_2\partial_{\tau_2}$ and $\Delta^2u\equiv0$, this follows from the identity
\ben
0=\Delta^2u(O)=[\partial_{\nu_1}^2 +\partial_{\tau_1}^2]^2u(O)=[2(1+c_1^2)c_2^2+4c_1^2c_2^2]\,\partial_{\tau_1}^2\partial_{\tau_2}^2u(O).
\enn
For $m>4$, we make the induction hypothesis that
\be\label{induction-hypothesis}
\nabla^j u(O)=0\quad\mbox{for all}\;j=0,1,\cdots,m.
\en We then only need to verify that $\nabla^{m+1}u =0$ at $O$.
For $j\in \N_0$, denote by $\nabla^j_\tau$ the vector of all tangential derivatives of order $j$, i.e.,
\ben
\nabla^j_\tau u=\left\{\partial_{\tau_1}^{j_1}\partial^{j_1}_{\tau_2}\,u: \quad j_1,j_2\in\N_0,j_1+j_2=j\right\}.
\enn Using the relations in (\ref{equation}) and (\ref{Derivative}) again, we have
\ben
\nabla_\tau^{m-3}\Delta^2u=\partial_{\tau_1}^{m+1}u=\partial_{\tau_1}^m\partial_{\tau_2}u
=\partial_{\tau_1}\partial_{\tau_2}^mu=\partial_{\tau_2}^{m+1}u=0\quad\mbox{at}\quad O.
\enn
Therefore, it remains to prove that the span of the differential operators $\nabla_\tau^{m-3}\Delta^2$,
$\partial_{\tau_1}^{m+1}$, $\partial_{\tau_1}^m\partial_{\tau_2}$,
$\partial_{\tau_1}\partial_{\tau_2}^m$ and $\partial_{\tau_2}^{m+1}$ contains the vector $\nabla^{m+1}_\tau$.

It can be readily checked that
\ben
\Delta&=&(1+c_1^2)\Lambda_1(\partial)\Lambda_2(\partial),\\
 \partial_{\tau_1}&=&-\frac{1}{2ic\,\ima c}(\Lambda_1(\partial)+\zeta\Lambda_2(\partial)),\\ \partial_{\tau_2}&=&\frac{1}{2i\,\ima c}(\Lambda_1(\partial)-\Lambda_2(\partial)),
\enn where
\ben
&&c:=\frac{c_1c_2+ic_2}{1+c_1^2},\quad \zeta:=-c/\overline{c},\\
&&\Lambda_1(\partial):=\partial_{\tau_1}+c\,\partial_{\tau_2},\quad
\Lambda_2(\partial):=\partial_{\tau_1}+\overline{c}\,\partial_{\tau_2}.
\enn Consequently, it suffices to verify that the span of the differential operators $$\Lambda_1^{j_1}\Lambda_2^{j_2}\Lambda_1^2\Lambda_2^2, \qquad\forall\; j_1,j_2\in\N_0,\, j_1+j_2=m-3,$$
 together with
 $$(\Lambda_1+\zeta\Lambda_2)^{m+1},\; (\Lambda_1+\zeta\Lambda_2)^m(\Lambda_1-\Lambda_2),\;
 (\Lambda_1+\zeta\Lambda_2)(\Lambda_1-\Lambda_2)^m,\; (\Lambda_1-\Lambda_2)^{m+1}$$
 contains the set of differential operators
 $\{\Lambda_1^{j_1}\Lambda_2^{j_2}:j_1+j_2=m+1\}$. This is equivalent to the claim that the polynomial expressions containing the terms $z_1^{m+1}, z_1^m z_2, z_1z_2^m, z_2^{m+1}$ in the expansion of
 \ben
 (z_1-z_2)^m z_1,\; (z_1-z_2)^m z_2,\; (z_1+\zeta z_2)^m z_1,\; (z_1+\zeta z_2)^m z_2
 \enn are linearly independent.
 Simple calculations show that
 \ben\begin{pmatrix}
 (z_1-z_2)^m z_1\\ (z_1-z_2)^m z_2\\
  (z_1+\zeta z_2)^m z_1\\ (z_1+\zeta z_2)^m z_2
 \end{pmatrix}=
 \begin{pmatrix}
 1 & -m & (-1)^m & 0\\
 0 & 1 & (-1)^{m-1}m & (-1)^m \\
 1 & m \zeta & \zeta^m  &0\\
 0 & 1 & m\zeta^{m-1} & \zeta^m
 \end{pmatrix}
 \begin{pmatrix}
 z_1^{m+1}\\
 z_1^{m}z_2\\
 z_1z_2^{m}\\
z_2^{m+1}
 \end{pmatrix} +\sum_{j=2 }^{m-1} M_j(\xi) z_1^{j}z_2^{m+1-j}
 \enn with $M_j\in \R^{4\times 1}$.
  It is easy to check that the determinant of the 4-by-4 coefficient matrix on the left hand side of the previous equation vanishes if and only if
 \ben
m^2 \zeta^{m-1} (1+\zeta)^2+(-1)^m [(-1)^m\zeta^m-1]^2=0.
 \enn
If $m\in\N$ is an odd number, the previous relation implies that
\ben
m^2 \zeta^{m-1}=(\frac{1+\zeta^m}{1+\zeta})^2=(\zeta^{m-1}-\zeta^{m-2}+\cdots+1)^2.
\enn
Since $|\zeta|=1$, the modulus of the right hand side of the previous identity equals to $m^2$ only if $\zeta=-1$, which however is impossible.  If $m$ is even, the number $\zeta_1=-\zeta$ is a solution of
\ben
-m^2 \zeta_1^{m-1}=-(\frac{1-\zeta_1^m}{1-\zeta_1})^2=-(\zeta_1^{m-1}+\zeta_1^{m-2}+\cdots+1)^2,
\enn which cannot hold for $|\zeta_1|=1$ and $\zeta_1\neq 1$.
\end{proof}

\begin{prop}\label{P6C}
Let $\CC=\CC_\omega\subset\R^3$ be the circular cone defined by (\ref{cone}) with the opening angle $2\omega\in(0,2\pi)\backslash\{\pi\}$. Suppose that
 $u\in H^2(B_1)$ solves the boundary value problem
\be\label{equationC}
\Delta^2 u=0\quad\mbox{in}\quad \CC\cap B_1,\qquad u=\partial_{\nu}u=0\quad\mbox{on}\quad\partial \CC\cap B_1.
\en Then $u\equiv 0$.
\end{prop}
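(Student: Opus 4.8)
The plan is to reduce the statement to biharmonic homogeneous polynomials and then eliminate them one degree at a time. In the applications $u$ is already a homogeneous polynomial in $\PP_{m+2}$; in general the Kondratiev-type theory behind Propositions \ref{P2C}--\ref{P4C} provides an asymptotic expansion of $u$ at the vertex $O$ into homogeneous terms, so it suffices to prove: \emph{if $P\in\PP_m$ is a biharmonic homogeneous polynomial of degree $m$ with $P=\partial_\nu P=0$ on $\partial\CC$, then $P\equiv0$.} Since $\Delta^2 P=0$, one checks (this is the polynomial analogue of Almansi's representation) that $P=H_m+|x|^2 H_{m-2}$ with $H_m\in\mathbb{H}_m$, $H_{m-2}\in\mathbb{H}_{m-2}$ harmonic, the cases $m\le1$ being immediate. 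In spherical coordinates this reads $P=r^m\bigl(Y_m(\theta,\varphi)+\widetilde Y_{m-2}(\theta,\varphi)\bigr)$, where $Y_m$ resp.\ $\widetilde Y_{m-2}$ is a linear combination of spherical harmonics of degree $m$ resp.\ $m-2$. Because $r$ does not depend on $\theta$ and the outer normal on $\partial\CC=\{\theta=\omega\}$ points along the $\theta$-direction, the boundary conditions become
\be
Y_m(\omega,\varphi)+\widetilde Y_{m-2}(\omega,\varphi)=0,\qquad \partial_\theta\bigl(Y_m+\widetilde Y_{m-2}\bigr)(\omega,\varphi)=0\qquad\mbox{for all }\varphi.
\no
\en

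Next I would decompose both equations into Fourier modes in $\varphi$, writing $Y_m=\sum_{|n|\le m}a_n P_m^{|n|}(\cos\theta)e^{in\varphi}$ and $\widetilde Y_{m-2}=\sum_{|n|\le m-2}b_n P_{m-2}^{|n|}(\cos\theta)e^{in\varphi}$. Using $\sin\omega\neq0$, the mode with $|n|\in\{m-1,m\}$ produces only $a_n P_m^{|n|}(\cos\omega)=a_n (P_m^{|n|})'(\cos\omega)=0$; since $\cos\omega\in(-1,1)$ is a regular point of the associated Legendre equation (\ref{Legendre-A}), $P_m^{|n|}$ and $(P_m^{|n|})'$ cannot vanish there simultaneously (Proposition \ref{P8}\,(ii)), whence $a_n=0$. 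For $|n|\le m-2$ one obtains the $2\times2$ linear system
\be
\begin{pmatrix} P_m^{|n|}(\cos\omega) & P_{m-2}^{|n|}(\cos\omega)\\ (P_m^{|n|})'(\cos\omega) & (P_{m-2}^{|n|})'(\cos\omega)\end{pmatrix}\begin{pmatrix}a_n\\ b_n\end{pmatrix}=\begin{pmatrix}0\\ 0\end{pmatrix}.
\no
\en
Thus $P\equiv0$ will follow as soon as the determinant
\be
D_{m,n}(\cos\omega):=P_m^{|n|}(\cos\omega)\,(P_{m-2}^{|n|})'(\cos\omega)-P_{m-2}^{|n|}(\cos\omega)\,(P_m^{|n|})'(\cos\omega)
\no
\en
is shown to be nonzero for every $\omega\in(0,\pi)\backslash\{\pi/2\}$ and every admissible $m,n$.

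Proving this nonvanishing is the crux, and I expect it to be the main obstacle: it is the three-dimensional counterpart of the delicate $4\times4$ determinant evaluated in the proof of Proposition \ref{P6}. I would derive it from the associated Legendre equations: a short computation yields the identity
\be
\frac{d}{dt}\bigl[(1-t^2)\,D_{m,n}(t)\bigr]=(4m-2)\,P_m^{|n|}(t)\,P_{m-2}^{|n|}(t),
\no
\en
and, since $P_m^{|n|}$ and $P_{m-2}^{|n|}$ share the parity $(-1)^{m-|n|}$, the function $D_{m,n}$ is odd, so $D_{m,n}(0)=0$; integrating from $0$ then gives $(1-t^2)D_{m,n}(t)=(4m-2)\int_0^t P_m^{|n|}(s)P_{m-2}^{|n|}(s)\,ds$. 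It then remains to verify that the right-hand side has no zero in $(-1,0)\cup(0,1)$, which I would obtain from the sign structure of the associated Legendre functions together with the orthogonality $\int_{-1}^1 P_m^{|n|}P_{m-2}^{|n|}\,ds=0$ (equal order, distinct degree). Consequently the only zero of $D_{m,n}$ in $(-1,1)$ is $t=0$, that is $\cos\omega=0$, corresponding precisely to the excluded opening angle $2\omega=\pi$.

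Finally, a complementary remark: if $H_{m-2}\equiv0$ (all $b_n=0$), then $P=H_m$ is a harmonic polynomial with vanishing Cauchy data on the relatively open surface $\partial\CC\cap B_1$, so $P\equiv0$ by Holmgren's uniqueness theorem --- this degenerate subcase is in fact already subsumed in the argument above. Carrying this out for each degree $m\in\N_0$ and invoking the initial reduction gives $u\equiv0$ in $B_1$, which completes the proof.
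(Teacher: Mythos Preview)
Your overall structure is the paper's proof exactly: reduce to a biharmonic homogeneous polynomial, write it as $H_m+|x|^2H_{m-2}$ (the paper phrases this as keeping only the $l=0,1$ layers in the spherical-harmonic expansion of Proposition~\ref{P7}), decompose into Fourier modes $e^{in\varphi}$, and for each mode obtain the $2\times2$ system you display. The modes $|n|\in\{m-1,m\}$ are handled via Proposition~\ref{P8}\,(ii), again just as the paper does. (Your initial reduction via Kondratiev asymptotics is fine but unnecessary: in every application of Proposition~\ref{P6C} in the paper, $u$ is already a homogeneous polynomial $q_{m+2}\in\PP_{m+2}$.)

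The nonvanishing of $D_{m,n}(\cos\omega)$ for $\omega\neq\pi/2$ is precisely Proposition~\ref{P8}\,(i), which the paper proves by invoking a theorem of Karlin--Szeg\"o on augmented Wronskians: $t\mapsto D_{m,n}(t)$ has exactly one nodal zero in $(-1,1)$, and the parity argument pins it to $t=0$. Your direct attack is attractive --- the identity $\frac{d}{dt}\bigl[(1-t^2)D_{m,n}\bigr]=(4m-2)P_m^{|n|}P_{m-2}^{|n|}$ is correct, $D_{m,n}$ is indeed odd, and the integrated form $(1-t^2)D_{m,n}(t)=(4m-2)\int_0^t P_m^{|n|}P_{m-2}^{|n|}\,ds$ follows. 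But the last step, that this integral is nonzero on $(0,1)$, is the entire content and is not delivered by ``sign structure together with orthogonality.'' Orthogonality only tells you $\int_0^1=0$, so the integral vanishes at both endpoints $0$ and $1$; the integrand $P_m^{|n|}P_{m-2}^{|n|}$ changes sign many times on $(0,1)$, and there is no evident monotonicity or fixed-sign argument to rule out intermediate zeros. Showing that $t=0$ is the \emph{only} zero of $D_{m,n}$ on $(-1,1)$ is exactly the $j=2$ case of the Karlin--Szeg\"o nodal-zero theorem; you have reformulated it, not simplified it. This is the one genuine gap in your proposal; elsewhere it coincides with the paper's proof.
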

Proposition \ref{P6C} extends the result of Proposition \ref{P6} in a planar corner domain to a circular conic domain in $\R^3$.
Being different from the proof of Proposition \ref{P6} using Cartesian coordinate, our proof of Proposition \ref{P6C} relies on the expansion of real-analytic functions using the spherical coordinates.
\begin{proof}
By Proposition \ref{P7} (ii), a real-analytic function $u=u(r,\theta,\varphi)$ in $B_1$ can be expanded  as the following convergent series
\be\label{eq:9}
u(x)=\sum_{n,l\in \N_0} r^{n+2\,l}\,\sum_{m=-n}^n\,a_{n,m}^{(l)}\;Y_{n}^m(\theta,\varphi),\quad a_{n,m}^{(l)}\in \C.
\en
Simple calculations using  (\ref{eq:32})  shows that
\ben
0=\Delta^2 u
=\sum_{n,l\in \N_0} 4(l+1)(l+2)(2l+2n+3)(2l+2n+5) r^{n+2\,l}\,\sum_{m=-n}^n\,a_{n,m}^{(l+2)}\;Y_{n}^m(\theta,\varphi).
\enn The previous relation implies that $a_{n,m}^{(l+2)}=0$ for all $l, n\in \N_0$ and $|m|\leq n$, since $r^{n+2\,l}\,Y_{n}^m \in \PP_{n+2l}$ are linearly independent. Hence, we only need to prove that $a_{n,m}^{(l)}=0$ for all $l=0,1$ and $n\in \N_0$, $|m|\leq n$.

The expansion of $u$ in (\ref{eq:9}) can be rewritten as
\be\label{u(x)}
u(x)=\sum_{n\in \N_0, l=0,1} r^{n+2\,l}\,\sum_{m=-n}^n\,a_{n,m}^{(l)}\;Y_{n}^m(\hat{x})=:\sum_{n\in \N_0} r^{n}F_n(\hat{x}),\quad
\hat{x}=(\theta,\varphi),
\en
with
\ben
F_n(\hat{x}):=
\left\{\begin{array}{lll}
a_{0,0}^{(0)}Y_{0}^0(\hat{x})\;&&\mbox{if}\quad n=0;\\
\sum_{m=-1}^1 a_{1,m}^{(0)}\;Y_{1}^m(\hat{x})&&\mbox{if}\quad n=1;\\
\sum_{m=-n}^n\,a_{n,m}^{(0)}\;Y_{n}^m(\hat{x})+\sum_{m=-n+2}^{n-2} a_{n-2,m}^{(1)}\;Y_{n-2}^m(\hat{x})&&\mbox{if}\quad n\geq 2.
\end{array}\right.
\enn
Making use of the boundary conditions
\ben
u=\partial_\theta u=0\quad\mbox{on}\quad \{(r,\theta,\varphi): 0<r<1,\,\theta=\omega,\,0\leq\varphi<2\pi\},
\enn we see that $F_n(\omega,\varphi)=\partial_\theta F_n(\omega,\varphi)=0$ for all $0\leq\varphi<2\pi$.
In view of the definition of the spherical harmonics (see (\ref{spherical-harmonics})), we obtain the following results
by inserting (\ref{u(x)}) into the boundary conditions and equating the coefficients of equal powers of $r$ :
\begin{description}
\item[(i)] $a_{0,0}^{(0)}=0$ in the case $n=0$, because $Y_{0}^0\equiv\sqrt{1/(2\pi)}\neq 0$.
\item[(ii)] $a_{1,m}^{(0)}P_1^{|m|}(\cos\omega)=a_{1,m}^{(0)}(P_1^{|m|})'(\cos\omega)=0$ for $m=-1,0,1$ when $n=1$.
Applying Proposition \ref{P8} (ii), it follows that $a_{1,m}^{(0)}=0$, since $P_n^{|m|}(t)$ and $(P_n^{|m|})'(t)$ cannot vanish simultaneously for any $t\in (-1,1)$.
\item[(iii)] For all $n\geq 2$ and $|m|\leq n-2$,
\be\label{eq:33}
\begin{pmatrix}
P_n^{|m|}(\cos\omega)& P_{n-2}^{|m|}(\cos\omega)\\
(P_n^{|m|})'(\cos\omega)& (P_{n-2}^{|m|})'(\cos\omega)
\end{pmatrix}
\begin{pmatrix}
a_{n,m}^{(0)}\\
a_{n-2,m}^{(1)}
\end{pmatrix}=0.
\en
By Proposition \ref{P8} (i) below, the determinant of the matrix on the left hand side of (\ref{eq:33}) never vanishes for $\omega\in (0,\pi)\backslash\{\pi/2\}$. Therefore, $a_{n,m}^{(0)}=a_{n-2,m}^{(1)}=0$ for $n\geq 2$ and $|m|\leq n-2$.
\item[(iv)] For all  $n\geq 2$ and $|m|=n,n-1$,
\ben
a_{n,m}^{(0)}P_n^{|m|}(\cos\omega)=a_{n,m}^{(0)}(P_n^{|m|})'(\cos\omega)=0.
\enn
In view of Proposition \ref{P8} (ii) we get $a_{n,m}^{(0)}=0$  for all $n \geq 2$ and $|m|=n,n-1$.
\end{description}

To sum up the above results in (i)-(iv), we obtain $a_{n,m}^{(l)}=0$  for all $l=0,1$, $n\in \N_0$ and $|m|\leq n$, $m\in \Z$, which finishes the proof of the proposition.

\end{proof}

\begin{prop}\label{P8}
Let $t\in(-1,1)$ and $m, n\in \Z$.
\begin{itemize}
\item[(i)] It holds that
\be\label{Det}
\mbox{det}\begin{pmatrix}
P_n^{m}(t)& P_{n-2}^{m}(t)\\
(P_n^{m})'(t)& (P_{n-2}^{m})'(t)
\end{pmatrix}\neq 0\qquad\mbox{for all}\quad t\neq 0,\, n-2\geq m\geq 0.
\en
\item[(ii)] It cannot happen that $P_n^{m}(t)=(P_n^{m})'(t)=0$ for all $0\leq m\leq n$.
\end{itemize}
 \end{prop}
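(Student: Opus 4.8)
\emph{Part (ii) (routine).} Fix $m$ with $0\le m\le n$. On the open interval $(-1,1)$ the associated Legendre equation \eqref{Legendre-A} rewrites as the second-order linear ODE $f''=\frac{2t}{1-t^2}f'-\frac{1}{1-t^2}\big(n(n+1)-\frac{m^2}{1-t^2}\big)f$, whose coefficients are real-analytic (in particular locally Lipschitz) on $(-1,1)$. By the uniqueness theorem for such equations, a solution vanishing together with its first derivative at a single point of $(-1,1)$ is identically zero. But $P_n^m=(1-t^2)^{m/2}\,\frac{d^m P_n}{dt^m}$ with $\frac{d^m P_n}{dt^m}$ a polynomial of degree $n-m\ge0$, so $P_n^m\not\equiv0$ on $(-1,1)$; hence $P_n^m(t)$ and $(P_n^m)'(t)$ cannot both vanish for any $t\in(-1,1)$, which is (ii).

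\emph{Part (i), reduction.} Write $P_k^m=(1-t^2)^{m/2}g_k$ with $g_k:=\frac{d^m P_k}{dt^m}$; for $k\ge m$ this is a nonzero constant multiple of the ultraspherical polynomial $C^{(m+1/2)}_{k-m}$ of degree $k-m$, and these are orthogonal on $(-1,1)$ with respect to the even weight $(1-t^2)^m$. A short computation in which the terms carrying $(1-t^2)^{m/2-1}$ cancel in pairs gives, for the determinant $D(t)$ appearing in \eqref{Det},
\[
D(t)=P_n^m(P_{n-2}^m)'-(P_n^m)'P_{n-2}^m=(1-t^2)^m\,\big(g_ng_{n-2}'-g_n'g_{n-2}\big).
\]
Since $(1-t^2)^m>0$ on $(-1,1)$, it suffices to show the skew-Wronskian $\mathcal W:=g_ng_{n-2}'-g_n'g_{n-2}$ of two Gegenbauer polynomials of the same parameter whose degrees differ by $2$ has no zero in $(-1,1)\setminus\{0\}$. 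Two facts are then immediate: (a) the parity $g_k(-t)=(-1)^{k-m}g_k(t)$ makes $\mathcal W$, hence $D$, an \emph{odd} function, so only $t\in(0,1)$ need be treated; (b) no two of the $g_k$ (equivalently, no two $P_k^m$ of the same order $m$) vanish at a common point of $(-1,1)\setminus\{0\}$ --- run the three-term recurrence $g_k=\gamma_k t\,g_{k-1}-\delta_k g_{k-2}$ (with $\gamma_k,\delta_k>0$) down to $g_m=\mathrm{const}\ne0$ --- so $(P_n^m(t),P_{n-2}^m(t))$ is never the zero vector on $(-1,1)$.

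\emph{Part (i), main step and obstacle.} Combining the three-term recurrence with the derivative relation $(1-t^2)(P_k^m)'=(k+m)P_{k-1}^m-ktP_k^m$ lets one eliminate $P_{n-1}^m$ and express $(2n-1)t(1-t^2)(P_n^m)'$ and $(2n-1)t(1-t^2)(P_{n-2}^m)'$ through $P_n^m$ and $P_{n-2}^m$ alone; substituting into $D$ yields, for $t\ne0$,
\[
(2n-1)\,t\,(1-t^2)\,D(t)=-\alpha\,(P_n^m)^2-\beta\,(P_{n-2}^m)^2+\big[(2n-1)^2t^2-(2n^2-2n+1-2m^2)\big]\,P_n^m P_{n-2}^m,
\]
with $\alpha=(n-m)(n-m-1)\ge2$, $\beta=(n+m)(n+m-1)\ge2$ (low cases: $D=-3t,\,-5t^3,\,-\tfrac{7}{8}t(15t^4-10t^2+3)$ for $m=0$, $n=2,3,4$). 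On $(0,1)$ the prefactor $(2n-1)t(1-t^2)$ is positive, so by (b) the assertion $D(t)\ne0$ is equivalent to the right-hand quadratic form being nonzero along the curve $t\mapsto(P_n^m(t),P_{n-2}^m(t))$. This is the crux and the main obstacle: the quadratic form is \emph{not} sign-definite near $t=0$ (its discriminant becomes negative), so definiteness cannot be invoked --- one must exploit that this curve stays off the null cone of the indefinite form. I see two routes. Route (a): prove by induction on $n$, via the recurrences, the explicit factorization $(2n-1)t(1-t^2)D(t)=c_n\,t^{2j}(t^2-1)\,R_n(t)$ with $c_n\ne0$ and $R_n$ strictly positive on $[-1,1]$, as suggested by the low cases. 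Route (b): integrate $\big((1-t^2)D\big)'=(4n-2)P_n^m P_{n-2}^m$ with $D(0)=0$ to get $(1-t^2)D(t)=(4n-2)\int_0^t P_n^m P_{n-2}^m\,d\tau$, observe that $P_n^m P_{n-2}^m=(1-\tau^2)^m g_ng_{n-2}$ is even with zero integral over $(-1,1)$ by orthogonality, and reduce to the non-vanishing of the partial integral $\int_0^t(1-\tau^2)^m g_ng_{n-2}\,d\tau$ on $(0,1)$, which in turn follows from a precise enough account (via Sturm's theorem) of how the zeros of $g_n$ and $g_{n-2}$ interlace in $(0,1)$. Route (a) is the more mechanical, route (b) the more conceptual; neither requires more than the recurrences, parity, and elementary ODE/Sturm theory.
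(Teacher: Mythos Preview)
Your argument for part~(ii) is complete and correct; the paper invokes the simplicity and interlacing of zeros of $P_n^m$ and $(P_n^m)'$, which is equivalent to (and in fact a consequence of) your ODE--uniqueness observation.

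Part~(i), however, is not finished. You reduce the question correctly --- the factorisation $D(t)=(1-t^2)^m\mathcal W(t)$, the oddness of $D$, the no-common-zero fact~(b), and the Abel-type identity $\big((1-t^2)D\big)'=(4n-2)P_n^mP_{n-2}^m$ are all right --- but both Routes~(a) and~(b) stop at the genuine difficulty. In Route~(a) you only have computer evidence for low cases and no inductive mechanism; in Route~(b) you still have to show that the partial integral $\int_0^t(1-\tau^2)^m g_ng_{n-2}\,d\tau$ does not vanish on $(0,1)$, and ``a precise enough account via Sturm's theorem of how the zeros interlace'' is a hope, not an argument. The interlacing of the zeros of $g_n$ and $g_{n-2}$ is too coarse by itself: the integrand $g_ng_{n-2}$ oscillates in sign, and to control the running integral one needs quantitative information about how successive signed areas compare --- exactly the content of the Karlin--Szeg\H{o} machinery.

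The paper sidesteps this by quoting a theorem of Karlin--Szeg\H{o} (\emph{J.\ Analyse Math.}\ 8 (1961), Chapter~4, Theorem~9): for orthogonal polynomials satisfying an equation of the form $((1-t^2)f')'+\lambda f=0$ (and its associated--Legendre variant), the ``augmented Wronskian'' $W_n(t;j)=P_n(P_{n-j})'-P_n'P_{n-j}$ has exactly $j-1$ \emph{nodal} zeros in $(-1,1)$. For $j=2$ there is thus a single sign change, and the parity argument you already gave pins it at $t=0$. Your fact~(b) together with part~(ii) then excludes non-nodal zeros away from~$0$ (a double zero of $D$ at $t_0\neq0$ would force $P_n^m(t_0)=P_{n-2}^m(t_0)=0$ via the identity for $\big((1-t^2)D\big)'$). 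So the paper's route is: cite Karlin--Szeg\H{o} for the hard combinatorial/oscillation step, then finish with parity. Your Route~(b) is in fact heading towards a re-derivation of that very result; if you want to make the proof self-contained, that is where the real work lies.
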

\begin{proof}
(i) Introduce the augmented Wronskian of the form
\ben
W_n(t;j)=\mbox{det}\begin{pmatrix}
P_n(t)& P_{n-j}(t)\\
P_n'(t)& P_{n-j}'(t)
\end{pmatrix},\quad j=1,2,\cdots,n.
\enn
The number $t_0\in (-1,1)$ is called a nodal zero of $W_n$ if $W_n$ has opposite signs for  $t=t_0+h$ and  $t=t_0-h$, $h$ sufficiently small. It has been shown in \cite[Chapter 4, Theorem 9]{KS1961} that $W_n(t;j)$ has exactly $j-1$ nodal zeros in the interval $(-1,1)$. Hence, when $j=2$, $W_n(t;2)$ has only one nodal zero $t_0$ in  $(-1,1)$. If $n\geq 2$ is odd, then $P_n(0)=P_{n-2}(0)=0$, since both $P_n$ and $P_{n-2}$ are odd functions. This implies that
 $t_0=0$ is the nodal zero of  $W_n(t;2)$. If $n\geq 2$ is even, we have $P_n'(0)=P'_{n-2}(0)=0$.
Hence, $t_0=0$ is also the nodal zero. This proves the first assertion with $m=0$.

In the case  $m\geq 1$, the functions
 $P_n^m(t), n=m,m+1,\cdots,$  satisfy the associated Legendre differential equation (\ref{Legendre-A}). 
The proof of \cite[Chapter 4, Theorem 9]{KS1961} depends solely on the form of the governing equation (see (\ref{Legendre}) in the case of Legendre polynomials) and extends to the associated Legendre differential equation (\ref{Legendre-A}). Hence,
the determinant on the right hand side of (\ref{Det}) has also one nodal zero in $(-1,1)$. On the other hand,
it is easy to check that either $P_n^m(0)=P_{n-2}^m(0)=0$ or $(P^m_n)'(0)=(P^m_{n-2})'(0)=0$, implying that $t_0=0$ is the unique nodal zero. Hence, the first assertion for $m\geq 1$ follows from the proof for the Legendre polynomials.

(ii) The second assertion is a consequence of the fact that the zeros of $P_n^{|m|}$ and $(P_n^{|m|})'$ are all simple and strictly interlaced. Note that when $|m|=n$, we have the explicit expression (see e.g. \cite[Chapter 2.4]{Nedelec})
\ben
P_n^{n}(t)=\frac{(2n)!}{2^n n!}(1-t^2)^{n/2}.
\enn
\end{proof}

Finally we present a corollary that extends the results of
Propositions \ref{P6} and \ref{P6C} to a more general case. It can also be considered as
a local non-solvability result on the Cauchy problem for the Laplace equation on a cone and it is proved just as
Lemmas \ref{lem:corner-2D} and \ref{lem-cone}.
\begin{cor}\label{cor}
Let $\U$ be the sector $\K_\omega\subset \R^2$ or the cone $\CC_{\omega/2}\subset \R^3$  defined in Section \ref{Results} with the opening angle $\omega\in(0,2\pi)\backslash\{\pi\}$. Suppose that
 $u\in H^2(\U\cap B_1)$ solves the Cauchy problem
\ben
\Delta u=h\,g\quad\mbox{in}\quad \U,\qquad u=\partial_{\nu}u=0\quad\mbox{on}\quad\partial \U\cap B_1,
\enn
where $h\in C^\alpha(\overline{\U\cap B_1})$ for some $\alpha\in(0,1)$, $h(O)\neq 0$ and $(\Delta+\lambda)g=0$ in $B_1$ for some $\lambda\in \C$.
Then $u\equiv 0$.
\end{cor}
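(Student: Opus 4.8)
The plan is to repeat, with only cosmetic changes, the proof of Lemma~\ref{lem:corner-2D} when $\U=\K_\omega\subset\R^2$ and that of Lemma~\ref{lem-cone} when $\U=\CC_{\omega/2}\subset\R^3$, letting the function $g$ take the role formerly played by $u_1$ and the source term $h\,g$ the role of $k^2(1-q)u_1$; only the case $l=0$ is needed, so the argument is in fact a mild simplification of the ones given there. Two preliminary remarks are needed. First, since $(\Delta+\lambda)g=0$ in $B_1$ is elliptic with constant (hence real-analytic) coefficients, $g$ is real-analytic in $B_1$; consequently $h\,g\in C^{0,\alpha}(\overline{\U\cap B_1})$, and interior and boundary Schauder estimates for $\Delta u=h\,g$ with the datum $u=0$ on the locally smooth part of $\partial\U$ away from $O$ give $u\in C^{1,\alpha}(\overline{\U\cap B_{1/2}})\cap H^2(\U\cap B_{1/2})$, so that all trace conditions occurring below are meaningful. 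Second, although Propositions~\ref{P5} and~\ref{P7} are stated for a real wavenumber $k$, their proofs use only the recurrence relations for the Taylor coefficients, which are algebraic in the constant $k^2$; replacing $k^2$ by $\lambda\in\C$ one obtains verbatim that the two lowest-order terms in the Taylor expansion of $g$ at $O$ are harmonic polynomials in $\R^n$.

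First I would fix a cut-off $\chi\in C_0^\infty(\overline{\U})$ with $\chi\equiv1$ near $O$ and support contained in $B_1$, and set $v:=\chi u$, so that $v\in C^{1,\alpha}(\overline{\U})\cap H^2(\U)$ solves
\[
\Delta v=\chi\,h\,g-[\Delta,\chi]u=:f\quad\text{in }\U,\qquad v=\partial_\nu v=0\quad\text{on }\partial\U ,
\]
where $f\in C^{0,\alpha}(\overline{\U})\subset\Lambda_1^{0,\alpha}(\U)$ (for $\alpha$ not exceeding the H\"older exponent of $h$) and $[\Delta,\chi]u\in\Lambda_0^{0,\alpha}(\U)$ because it is supported away from $O$. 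This is exactly the situation of~(\ref{eq:2}), resp.~(\ref{eq:2c}), with $l=0$. Using Proposition~\ref{P2}, resp.~\ref{P2C}, together with the fact that $1\neq j\pi/\omega$, resp.\ $\lambda_{D/N,j}\neq1$ (valid because $\omega\neq\pi$), and Propositions~\ref{P3},~\ref{P4}, resp.~\ref{P3C},~\ref{P4C}, one obtains as in the opening steps of the proofs of Lemmas~\ref{lem:corner-2D} and~\ref{lem-cone} that $v\in\Lambda_{1,D}^{2,\alpha}(\U)\cap\Lambda_{1,N}^{2,\alpha}(\U)$.

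The core of the proof is the induction of Step~4 of Lemma~\ref{lem:corner-2D} (resp.\ Step~3 of Lemma~\ref{lem-cone}), with base case Step~3 (resp.\ Step~2): assuming that $v\in\Lambda_{1-m}^{2,\alpha}(\U)$, $f\in\Lambda_{1-m}^{0,\alpha}(\U)$ and $\nabla^j g(O)=0$ for all $j\le m-1$, one writes $f=\chi p_m+f_m$ with $p_m:=h(O)\,g_m$, where $g_m\in\PP_m$ is the homogeneous degree-$m$ Taylor polynomial of $g$ at $O$ and $f_m\in\Lambda_{-m}^{0,\alpha}(\U)$; the complexified Proposition~\ref{P5}, resp.~\ref{P7} (iii), gives $\Delta p_m=0$. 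Solving $\Delta_{D/N}v_{D/N}=p_m$ by Proposition~\ref{P5-2d}, resp.~\ref{Special-solution-C}, with $\kappa=m$, subtracting $\chi v_{D/N}$ from $v$, applying Proposition~\ref{P4}, resp.~\ref{P4C}, and comparing the resulting Dirichlet and Neumann expansions of $v$ near $O$, the linear independence of the angular functions $\sin[(j\pi/\omega)\theta]$ and $\cos[(j\pi/\omega)\theta]$ (resp.\ the $L^2(\Omega)$-orthogonality of the eigenfunctions $V_{j,D/N}$) forces all logarithmic terms and all homogeneous terms of degree below $m+2$ to vanish. Hence the lowest-order term $q_{m+2}\in\PP_{m+2}$ of $v$ at $O$ satisfies $\Delta q_{m+2}=p_m$, $\Delta^2 q_{m+2}=\Delta p_m=0$ in $\U$ and $q_{m+2}=\partial_\nu q_{m+2}=0$ on $\partial\U$, so $q_{m+2}\equiv0$ by Proposition~\ref{P6}, resp.~\ref{P6C}; therefore $p_m=0$, and since $h(O)\neq0$ this yields $\nabla^m g(O)=0$, together with $v\in\Lambda_{-m}^{2,\alpha}(\U)$ and $f\in\Lambda_{-m}^{0,\alpha}(\U)$. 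The base case $m=0$ is handled identically with $\kappa=0$, giving $g(O)=0$.

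Iterating, $\nabla^j g(O)=0$ for every $j\in\N_0$; by real-analyticity $g\equiv0$ in the connected set $B_1$, so $\Delta u=h\,g=0$ in $\U\cap B_1$. Extending $u$ by zero across $\partial\U\cap B_1$ — the Cauchy data match, being zero — and across the removable point $O$ produces an $H^1$ function that is harmonic in all of $B_1$ and vanishes on the nonempty open set $B_1\setminus\overline{\U}$; hence it vanishes identically, and $u\equiv0$. The only point requiring care is the Dirichlet-versus-Neumann comparison and the attendant cancellation of the logarithmic and lower-order harmonic terms inside the induction step, but this is precisely the computation already carried out in Steps~3--4 of Lemma~\ref{lem:corner-2D} and Steps~2--3 of Lemma~\ref{lem-cone}, so that no new difficulty arises.
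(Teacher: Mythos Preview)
Your proposal is correct and follows exactly the approach the paper indicates, which states only that the corollary ``is proved just as Lemmas~\ref{lem:corner-2D} and~\ref{lem-cone}.'' One minor point: your Schauder justification does not establish $C^{1,\alpha}$ regularity of $u$ \emph{at} the vertex $O$ (Schauder estimates apply only away from the corner), but this is inessential, since what the argument actually requires is $v=\chi u\in V_0^2(\U)$, and that follows directly from $v\in H^2(\U)$ with vanishing Cauchy data via Hardy-type inequalities in the angular variable on the sector or cone.
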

Note that Corollary \ref{cor} does not hold in the case of a half space ($\omega=\pi$) where even a global existence result for the analytic Cauchy problem can be proved; see \cite[Theorem 9.4.8]{Hor}.



\begin{thebibliography}{99}



\bibitem{Rondi05} G. Alessandrini  and L. Rondi,
 Determining a sound-soft polyhedral scatterer by a single far-field measurement,
 Proc. Amer. Math. Soc.,  133 (2005): 1685--1691 (Corrigendum: arXiv: math/0601406v1, 2006).

 \bibitem{BLS} E. Bl\aa{}sten, L. P\"aiv\"arinta and J. Sylvester, Corners always scatter, Commun. Math.
Phys., 331 (2014): 725-753.

\bibitem{B} A. L. Bukhgeim,  Recovering a potential from Cauchy data in the two dimensional case, J. Inverse and Ill-posed Problem,  16 (2008): 19--33.


\bibitem{CGH2010} F. Cakoni, D. Gintides and H. Haddar,  The existence of
an infinite discrete set of transmission eigenvalues, SIAM J. Math. Anal. , 42 (2010): 237-255.




 \bibitem{CY} J. Cheng and M. Yamamoto,
 Uniqueness in an inverse scattering problem with non-trapping polygonal obstacles with at most two incoming waves,
Inverse Problems, 19 (2003): 1361-1384 (Corrigendum: Inverse Problems, 21 (2005): 1193).

\bibitem{CK} D. Colton and R. Kress, {\em Inverse Acoustic and Electromagnetic Scattering Theory}, Springer, New York, 1998.

\bibitem{CP} D. Colton and P. Monk, The inverse scattering problem for timeharmonic acoustic waves in an inhomogeneous medium, Quart. J. Mech. Appl. Math., 41 (1988): 97-125.

 \bibitem{CPJ} D. Colton,  L. P\"aiv\"arinta and J. Sylvester, The interior transmission problem, Inverse Probl. Imaging, 1 (2007): 13-28.

\bibitem{CS1983} D. Colton  and
B. D. Sleeman, Uniqueness theorems for the inverse problem of acoustic scattering, IMA J. Appl. Math., 31 (1983): 253--259.

\bibitem{DP} M. Dauge and M. Pogu,
Existence et r\'{e}gularit\'{e} de la fonction potentiel pour des \'{e}coulements de fluides parfaits s'\'{e}tablissant autour de corps \`{a} singularit\'{e} conique, Ann. Fac. Sci. Toulouse Math. (5), 9 (1988): 213-242.

\bibitem{EY06} J.  Elschner  and M. Yamamoto,
 Uniqueness in determining polygonal sound-hard obstacles with a single incoming wave,
Inverse Problems, 22 (2006): 355-364.



\bibitem{ElHu2015} J. Elschner  and G. Hu, Corners and edges always scatter, Inverse Problems, 31 (2015): 015003.


\bibitem{ElHu} J. Elschner  and G. Hu,  Uniqueness in inverse transmission scattering problems for multilayered obstacles,
 Inverse Problems and Imaging,  5 (2011): 793-813.

\bibitem{EHY2014} J. Elschner, G. Hu and M. Yamamoto, Uniqueness in inverse elastic scattering from unbounded rigid surfaces of rectangular type, Inverse Problems and Imaging, 9 (2015): 127-141.

\bibitem{GT} D. Gilbarg and N. S. Trudinger,
Elliptic Partial Differential Equations of Second Order,
Springer, Berlin, 2001.

\bibitem{Gintides} D. Gintides, Local uniqueness for the inverse scattering problem in acoustics via the Faber- Krahn inequality, Inverse Problems, 21 (2005): 1195-1205.

\bibitem{Grisvard} P. Grisvard, {\em Singularities in Boundary Value Problems},
Masson, Paris, 1992.

\bibitem{Hor} L. H\"{o}rmander,  The Analysis of Linear Partial Differential Operators I, Springer, Berlin, 1983.

\bibitem{HSV} G. Hu, M. Salo, E. V. Vesalainen, Shape identification in inverse medium scattering problems with a single far-field pattern, SIAM J. Math. Anal.,  48 (2016): 152-165.

\bibitem{HL14} G. Hu and X. Liu, Unique determination of balls and polyhedral scatterers with a single point source wave, Inverse Problems, 30 (2014): 065010

\bibitem{OYa} O. Y. Imanuvilov and M. Yamamoto,  Inverse boundary value problem for Helmholtz equation in two dimensions,  SIAM J. Math. Anal., 44 (2012):  1333--1339.

\bibitem{Isakov90} V. Isakov, On uniqueness in the inverse transmission scattering problem,  Comm. Part. Diff. Equat.,  15 (1990):  1565-1587.

 \bibitem{Isakovbook} V. Isakov, Inverse Problems for Partial Differential Equations, Springer-Verlag, New York, 1998.




\bibitem{KS1961} S. Karlin, G. Szeg\"o, On certain determinants whose elements are orthogonal polynomials, Journal
d' Analyse Math., 8 (1961): 1-157.

 \bibitem{Kirsch93} A. Kirsch and R. Kress,  Uniqueness in inverse obstacle scattering,  Inverse Problems,
9 (1993): 285-299.

\bibitem{Kondratiev} V. A. Kondratiev, Boundary value problems for elliptic equations in domains with conical or angular points, Trans. Moscow Math. Soc., 16 (1967): 227--313.

\bibitem{KMR} V. A. Kozlov, V. G. Maz'ya and J. Rossmann, Elliptic Boundary Value Problems in Domains with Point Singularities, American Mathematical Society, Providence, 1997.

\bibitem{KS} S. Kusiak and J. Sylvester, The scattering support, Communications on Pure and Applied Mathematics, 56 (2003): 1525-1548.

\bibitem{Lax} P.D. Lax and R.S. Phillips, Scattering Theory, Academic Press, New York, 1967.

\bibitem{LHZ06} H. Liu   and J. Zou,
Uniqueness in an inverse obstacle scattering problem for both sound-hard and sound-soft polyhedral scatterers,
Inverse Problems,  22 (2006): 515-524.

\bibitem{MNP} V. G. Maz'ya, S. A. Nazarov and B. A. Plamenevskii,
{\em Asymptotic Theory of Elliptic Boundary Value Problems in Singularly Perturbed Domains I}, Birkh\"auser-Verlag, Basel, 2000.



 \bibitem{Nac} A. Nachman,  Reconstruction from boundary measurements, Ann. of Math.,  128 (1988): 531--576.

\bibitem{NP} S. A. Nazarov and B. A. Plamenevsky, Elliptic Problems in Domains with Piecewise Smooth Boundaries, Walter de Gruyter, Berlin, 1994.

\bibitem{Nedelec} J. C. Nedelec, Acoustic and Electromagnetic Equations. Integral Representations for Harmonic Problems, Springer-Verlag, New York, 2000.

\bibitem{PSV} L. P\"aiv\"arinta, M. Salo and E. V. Vesalainen, Strictly convex corners scatter, arXiv:1404.2513, 2014.

 \bibitem{SG} J. Sylvester and G. Uhlmann, A global uniqueness theorem for an
inverse boundary value problem,  Ann. of Math., 125 (1987): 153--169.

\bibitem{SU2004} P. Stefanov and G. Uhlmann, Local uniqueness for the fixed energy fixed angle inverse problem in obstacle scattering, Proc. Amer. Math. Soc, 132 (2004): 1351--1354.

\bibitem{S} J. Sylvester,  Discreteness of transmission eigenvalues via upper triangular compact operators, SIAM J. Math. Anal.,  44 (2012): 341--354.






\end{thebibliography}
\end{document}